\documentclass[12pt,twoside]{article}
\usepackage{amsmath,amssymb,mathrsfs,graphicx}
\usepackage[small,sc]{caption} 
\usepackage{xcolor}
\usepackage{tikz} 
\usepackage[normalem]{ulem}
\usetikzlibrary{decorations.pathreplacing}
\usepackage{comment}

\newtheorem{theorem}{Theorem}[section]
\newtheorem{lemma}[theorem]{Lemma}

\newtheorem{corollary}[theorem]{Corollary}
\newtheorem{definition}[theorem]{Definition}

\newtheorem{rmrk}[theorem]{Remark}

\DeclareMathAlphabet{\mathbfit}{OML}{cmm}{b}{it}

\makeatletter
\@addtoreset{equation}{section}
\makeatother

\newenvironment{remark}
{\begin{rmrk} \em}
{\end{rmrk}}

\newcommand{\Z} {\mathbb{Z}}

\newcommand{\qed} {\hfill {\small Q.E.D.} \par\medskip}

\newcommand{\proof} {\noindent \textsc{Proof.} }
\newcommand{\proofof}[1] {\noindent \textsc{Proof of {#1}.} }

\newcommand{\cM}{\mathcal{M}}
\newcommand{\cF}{\mathcal{F}}

\newcommand{\cR}{\mathcal{R}}
\newcommand{\cK}{\mathcal{K}}

\newcommand{\bN}{\mathbb{N}}
\newcommand{\bR}{\mathbb R}
\newcommand{\cJ}{\mathcal J}
\newcommand{\fA}{\mathfrak A}
\newcommand{\fB}{\mathfrak B}
\newcommand{\fT}{\mathfrak T}
\newcommand{\cT}{\mathcal T}
\newcommand{\cI}{\mathcal I}
\newcommand{\fI}{\mathfrak I}
\newcommand{\fS}{\mathfrak S}
\newcommand{\fF}{\mathfrak F}
\newcommand{\cS}{\mathcal S}
\newcommand{\cH}{\mathcal H}
\newcommand{\ve}{\varepsilon}

\newcommand{\cA}{\mathcal A}
\newcommand{\cC}{\mathcal C}
\newcommand{\cD}{\mathcal D}
\newcommand{\cE}{\mathcal E}
\newcommand{\cG}{\mathcal G}
\newcommand{\cO}{\mathcal O}
\newcommand{\fO}{\mathfrak O}

\newcommand{\bH}{\mathbb H}
\newcommand{\vf}{\varphi}

\renewcommand{\emptyset} {\varnothing}

\newcommand{\cantor}{\mathfrak R}

\newcommand{\brush} {\mathbb B}

\newcommand{\Erg}{\text{Erg}}

\begin{document}

\title{\textbf{K-mixing for aperiodic Lorentz gases}}

\author{\scshape
Giovanni Canestrari\thanks{
Department of Mathematics, University of Toronto, 
40 St.~George Street, Toronto, ON M5S 2E4, Canada. 
Email: {\texttt{giovanni.canestrari@utoronto.ca}}.
}
\ and
Marco Lenci\thanks{
Dipartimento di Fisica e Astronomia, Universit\`a di Bologna, 
Via Irnerio 46, 40126 Bologna, Italy. 
E-mail: {\texttt{marco.lenci@unibo.it}}.}
\thanks{
Istituto Nazionale di Fisica Nucleare,
Sezione di Bologna, Viale Berti Pichat 6/2,
40127 Bologna, Italy.}
}

\maketitle

\begin{abstract}
We prove a general theorem on the decomposition into K-mixing components for recurrent, piecewise-smooth hyperbolic maps in two dimensions. In contrast with classical results, we do not assume that the map preserves a probability measure. As an application, we show that every recurrent aperiodic Lorentz gas satisfying certain uniformity assumptions is K-mixing. Finally, we discuss some consequences of K-mixing for the decay of correlations relative to zero-mean $L^1$ observables in aperiodic Lorentz gases.

\bigskip\noindent 
\textbf{Mathematics Subject Classification (2020):} 37A40, 37A25, 37E05, 37D25, 37C25. 
  
\bigskip\noindent
\textbf{Keywords:} hyperbolic billiards, K-automorphism, K-property, infinite ergodic theory, smooth ergodic theory, Lorentz tubes, chaos.
  
\end{abstract}

\section{Introduction}
\label{sec-intro}

Consider the measure space \((\cM, \fA, \mu)\) and a measurable invertible transformation \(\cF : \cM \to \cM\). The transformation \(\cF\) is said to be \emph{K-mixing} (or to have the \emph{K-property}, or to be a \emph{K-automorphism}, or simply to be \emph{K}) if there exists a sub-$\sigma$-algebra \(\fS \subset \fA\) such that
\begin{equation}\label{def:K}
\begin{split}
 \mathfrak{S} \subseteq \cF \mathfrak{S}, \qquad \bigvee_{n = 0}^{\infty} \cF^{n}\mathfrak{S} = \mathfrak{A}, \qquad \bigcap_{n = 0}^{\infty} \cF^{-n}\mathfrak{S} =\mathfrak N.
\end{split}
\end{equation}
Here, \(\cF \fS = \{\cF(B) \,:\, B \in \fS\}\), \(\vee_{n=0}^{\infty} \cF^n \fS\) is the intersection of all complete sub-$\sigma$-algebras each of which contains all \(\cF^n \fS\) and \(\mathfrak N\) is the trivial $\sigma$-algebra. The conditions \eqref{def:K} can be suggestively described by saying that the future is asymptotically independent of the past.%
\footnote{See the introduction of \cite{MR4514477} for a slightly more technical explanation of this point.}
In other words, the dynamics causes a strong form of memory loss of the initial conditions of the system. In particular, in the case where \(\mu\) is a probability measure, K-mixing implies mixing of all orders. 

After its introduction by Kolmogorov \cite{MR103254}, the K-property became one of the principal benchmarks for strong stochastic behavior. The situation changed somewhat in the 1970s when, due to the work of Ornstein and others, the Bernoulli property replaced the K-property as the ultimate \emph{non-quantitative} ergodic property (a quite incomplete list of references, especially relevant for this paper, includes \cite{MR257322,MR274718,MR355003,MR1375125}). It is hard to argue against this development, as Bernoulliness means by definition an isomorphism with a Bernoulli shift, the ``most random'' stochastic process. But this notion is intrinsically probabilistic and makes no sense in the context of infinite ergodic theory \cite{MR1450400, MR4647080}, within which the present paper is situated.

We consider two-dimensional hyperbolic dynamical systems which are Poincaré recurrent w.r.t.\ a natural, possibly infinite, measure. Under abstract assumptions inspired by the theory of hyperbolic billiards, we prove that these systems admit a \emph{K-decomposition}, that is, a decomposition into countably many cycles whose associated return maps are K. As an application of our abstract theory, we prove that a large class of planar aperiodic Lorentz gases (ALGs) are K-mixing.

The ALGs are billiards models for which the configuration of scatterers is not periodic, as opposed to periodic Lorentz gases (PLGs). Both PLGs and ALGs are natural models in statistical physics and were first employed to describe the motion of electrons in a crystal \cite{Lo1905}. In this context, aperiodicity is particularly relevant as it models the presence of impurities. Moreover, as a bare-bone model of a gas, an ALG is clearly more realistic than a PLG.%
\footnote{See footnote 1 of \cite{MR2842891} for a brief history of the Lorentz model in the context of the kinetic theory of gases.} Recently, PLGs and ALGs gathered even more attention from the ergodic theory community because of their role in infinite ergodic theory (cf.\ \cite{MR4897334} and references therein). 

Without aiming at a precise classification, hyperbolic dynamical systems in infinite measure can be divided into two main categories: those that are non-uniformly hyperbolic (which often model intermittent behavior; see, e.g., \cite{MR576270, MR1695915}) and those that are uniformly hyperbolic, but their natural phase space is non-compact. The latter category can be further divided into two classes: those that possess some sort of periodic structure (for example, \(\mathbb Z\)-extensions of an Anosov map) and those that are aperiodic. We remark that even for the first class, understanding the dynamics on the extended phase space is much more challenging than understanding the base system. Contrary to the periodic case (see, e.g., \cite{MR4514477, MR4752568, MR4396670, MR4970176, MR4866346, CastorriniRavotti2024} and references therein), the literature addressing truly aperiodic systems is rather skewed. Specifically, very little is known about ALGs. In \cite{MR1992667} it was shown that recurrent ALGs are ergodic, and in \cite{MR2237472, MR2720043, MR2842891} that there exist plenty of recurrent ALGs (the set of recurrent ALGs is typical in a certain topological sense). Finally, quenched ergodicity was shown for classes of random Lorentz tubes \cite{MR2741899, MR2820038}. 

As for mixing properties, \cite{MR4507923} shows K-mixing for certain compact, finite-measure systems which describe the billiard dynamics of Lorentz gases defined by aperiodic repetitive local configurations of scatterers of finite complexity. In \cite{MR4604909} the authors prove exponential decay of correlations for sequential billiards. This allows them to compare the sequence of visited scatterers in a class of ALGs with ``lazy gates'' to a stochastic process. Finally, in \cite{MR4505391} the property of global-local mixing (see below) was investigated for certain mechanical systems preserving an infinite measure, including a class of asymptotically periodic Lorentz gases.

Achieving a mixing result for truly aperiodic systems is the main contribution of the present paper. Our arguments are inspired by, and makes use of, tools from the theory of Sinai billiards. We emphasize, however, that the classical argument for K-mixing in the finite-measure case (see, e.g., \cite{MR467837}) is not applicable in our setting because it uses entropy, a concept that does not easily extend to infinite-measure contexts. Therefore, in order to prove K-mixing, we apply a simple criterion, originally developed in \cite{MR3009111} for the exactness of non-invertible dynamical systems. Our method gives also an alternative to Sinai's classical proof of the K-property for hyperbolic billiards \cite{MR274721, MR197684}.

Our main technical assumption is the existence of \emph{Cantor rectangles} throughout the phase space (see (H4) in Section \ref{sec-intro}). Cantor rectangles generate Markov partitions for discontinuous hyperbolic systems and have found their main application in the proof of decay of correlations (see, e.g., the classical papers \cite{MR597749, MR606459} on Sinai billiards). Because of their undeniable usefulness, Cantor rectangles have been constructed in a variety of settings (limiting ourselves to billiards, see \cite{MR1071936} for infinite-horizon billiards and billiards with corners and \cite{MR2041267, MR2150341} for non-uniformly hyperbolic billiards). We refer to \cite[Section 7.3]{MR2229799} for a more comprehensive historical review of the topic.

A rough idea of the proof is the following. Any positive-measure set \(A\) in the tail of \(\fS\) has a density point w.r.t.\ the Lebesgue measure on some unstable manifold contained in a Cantor rectangle \(\cantor\). By recurrence, this density point returns to \(\cantor\) infinitely often. Exploiting the product structure of the rectangle and the expansion on unstable manifolds, this shows that an iterate of \(A\) “fills" the Cantor rectangle almost completely. Since the orbit of \(\cantor\) recurs to itself infinitely often, the orbit of any set \(A\) as above inherits the same recurrence properties as \(\cantor\). This heuristic picture (together with additional arguments) implies that the tail \(\sigma\)-algebra of \(\fS\) is atomic.      

It would be interesting to investigate whether the abstract theory developed in this paper, or some adaptation thereof, can also be applied to other infinite-measure hyperbolic systems, such as those investigated in \cite{MR1930574, MR2505302, MR5005377}. We do not pursue this goal here.

K-mixing also has the following implication for the ``decay of correlation'' in infinite-measure systems. Let \(\cG\) be the closure in \(L^{\infty}\) of \(\cup_{n=0}^{\infty}\cF^n \fS\). For any \(g,h \in L^1(\mu)\), \(\int g  = \int h \), and \(F\in \cG\), we have
\begin{equation}\label{eq:coalescence}
\lim_{n \to \infty} \biggl(\,\int_{\cM} (F\circ \cF^n) g \, d\mu- \int_{\cM} (F\circ \cF^n) h \, d\mu \,\biggr)= 0
\end{equation}
(cf.\ \cite[Theorem 3.5]{MR3235352} and Corollary \ref{cor:decay-K-mix} below). This implies that, if one has two different distributions of the initial condition, the two respective random variables \(F\circ \cF^n\) become asymptotically identical. Notice that, by the first two items of \eqref{def:K}, the sequence \(\{\cF^k \fS\}_k\) approximates the original $\sigma$-algebra \(\mathfrak A\) arbitrarily well, so the assumption on the observable \(F\) is mild. A somewhat stronger (more uniform) statement is obtained for ALGs in Theorem \ref{thm:decay-of-correlations} below.

To conclude, we mention that K-mixing is closely related to other definitions of mixing for infinite-measure systems. In particular, K-mixing (or exactness, its counterpart for non-invertible transformations) has been used as an intermediate step in proving \emph{global-local mixing} for a variety of systems (cf.\ \cite{MR3867232, MR4201832, MR3639443, CanestrariLenci2024, CoatesMelbourne2025}). Global-local mixing is one of the notions of mixing that have been devised in the context of infinite ergodic theory (we refer to \cite{MR2669446} for a detailed discussion of this and related notions%
\footnote{For example, \eqref{eq:coalescence} is called (M3) in \cite{MR2669446} and renamed global-local mixing of type 1, or (GLM1), in \cite{MR3235352}.}%
). 

The paper is structured as follows: in Section \ref{sec:settingandresults} we state the main abstract result for the K-decomposition, whose proof is postponed to Section \ref{sec:proof}. In Section \ref{sec:aperiodic-LG} we apply the abstract theorem to billiards and we prove that a class of aperiodic Lorentz gases are K. Also in Section \ref{sec:aperiodic-LG}, we state and prove Theorem \ref{thm:decay-of-correlations} on correlations. 
\medskip

Throughout the paper \(\bN = \{1,2,...\}\), \(\bN_0 = \bN\cup\{0\}\), \(\cF : \cM \to \cM\) and the phase space \(\cM\) is a Riemannian manifold. For any two points \(p,q \in \cM\) and \(r>0\), we denote by \(|p-q|\) the distance induced by the metric and by \(B(p,r)\) the open ball centered at \(p\) of radius \(r\) w.r.t.\ this metric. We denote by \(\mu\) a measure equivalent to the natural area form. Unless otherwise specified, we use the terminology `almost everywhere' (or mod \(0\), or a.e.\ point. etc.)\ referred to the measure \(\mu\). For a generic $\sigma$-algebra \(\fB\), we denote by \(\fB_+\) the collection of sets \(P \in \fB\) of strictly positive measure. We also denote by \(L^{\infty}(\fB)\) the \(\fB\)-measurable essentially bounded functions. For any two sets \(A\) and \(B\), \(A\triangle B = (A\setminus B) \cup (B \setminus A)\). Finally, given a \(\cC^1\) curve \(W\subset \cM\), \(|W|\) indicates the length of \(W\).

\paragraph*{Acknowledgments.} 
This research was partially supported by the PRIN Grants 2017S35EHN and 2022NTKXCX
of the Ministry of University and Research (MUR), Italy. It is also part of the authors' activity within the DinAmicI community \linebreak (\texttt{www.dinamici.org}) and within the Gruppo Nazionale di Fisica Matematica, INdAM, Italy. C.~G.~was also partially supported by the MUR Excellence Department Projects MatMod@TOV and Math@TOV, awarded to the Department of Mathematics, Universit\`a di Roma Tor Vergata. Part of this article was written during C.~G.'s post-doctoral fellowship at the University of Bologna in 2024-25.

\section{Setting and results}
 \label{sec:settingandresults}

Let \(\cM\) be a two-dimensional Riemannian manifold (perhaps with boundary and corners), \(\fA\) the completion of the Borel $\sigma$-algebra and \(\mu\) be a locally finite measure equivalent to the natural area form on \(\cM\). This makes \((\cM, \fA, \mu)\) a Lebesgue space. Let \(\cF : \cM \to \cM\) be defined almost everywhere. The dynamical system \((\cM, \fA, \mu, \cF)\) satisfies the following assumptions (H1)-(H7), which are particularly relevant in the context of hyperbolic billiards \cite[Section 6.3]{MR2229799}.  

\begin{itemize}
\item[(H1)](Smoothness with singularities) We assume that $\mathcal{F}$ is an invertible and differentiable map defined everywhere except for singularities. More precisely, we assume that there exist two countable unions \(\cS_1, \cS_{-1}\) of closed curves  such that \(\cF\) is a diffeomorphism \(\cM \setminus \cS_1 \to \cM \setminus \cS_{-1}\).
\end{itemize}
This definition implies that \(\cF^n\) and \(\cF^{-n}\) can only be defined, respectively, on \(\cM \setminus \cS_n\) and \(\cM \setminus \cS_{-n}\), where, for \(n \ge 2\),
\begin{equation}\label{eq:eq-future-sing}
\mathcal{S}_{n} = \mathcal{S}_{n-1} \cup \mathcal{F}^{-1}(\mathcal{S}_{n-1}), \qquad \mathcal{S}_{-n} = \mathcal{S}_{-n+1} \cup \mathcal{F}(\mathcal{S}_{-n+1}).
\end{equation}
The sets where some future, respectively, past iterate of $\mathcal{F}$ is singular are denoted by \(\mathcal{S}_{\infty} = \cup_{n = 0}^{\infty}\mathcal{S}_{n}\) and \(\mathcal{S}_{-\infty} = \cup_{n = 0}^{\infty}\mathcal{S}_{-n}\). For any $x \in \mathcal{M}$ we call \emph{local stable manifold} (LSM) at \(x\) any $C^{1}$ open curve \(W_0^s\) containing $x$ and not intersecting \(\cS_{\infty}\), such that \(\lim_{n \to \infty}|\cF^n (W_0^s)| =0\). \emph{Local unstable manifolds} (LUMs) are defined similarly, with \(\cF^{-1}\) instead of 
\(\cF\). 
\begin{itemize}
\item[(H2)](Stable and unstable manifolds) We assume that for almost every point $x\in \cM$ there exist a LSM $W^{s} \ni x$ and a LUM $W^{u} \ni x$. In particular, there exist mod \(0\) partitions \(\xi^s\) and \(\xi^u\) of \(\cM\) into LSMs and LUMs respectively. For any \(x \in \cM\), we denote by \(W^s(x)\) (resp.\ \(W^u(x)\)) the unique element \(W^s\) of \(\xi^s\) (resp.\ \(\xi^u\)) such that \(x \in W^s\). (If \(x\) does not possess any LSM we set \(W^s(x) =\emptyset\) and analogously for LUMs). We also assume that the lengths of \(W^s\) and \(W^u\) are uniformly bounded and for all \(n \in \bN\) and \(x \in \cM\),
\begin{equation}\label{eq:non-unif-hyp}
\begin{split}
    \cF^n(W^s(\cF^{-n}(x))) \subseteq W^s(x), \qquad & \cF^{-n}(W^u(\cF^{n}(x))) \subseteq W^u(x),\\[4pt]
    \lim_{n \to \infty} |\cF^{n}(W^s(\cF^{-n}(x)))| = 0, \qquad & \lim_{n \to \infty} |\cF^{-n}(W^u(\cF^{n}(x)))| = 0.
\end{split}
\end{equation}
\end{itemize}
For any \(\cC^1\) curve \(W\), we denote by \(\bold m_W\) the one-dimensional Lebesgue measure on \(W\) and by \(\widehat{\bold m}_{W}\) the same measure, but normalized so that \(\widehat{\bold m}_{W}(W) = 1\).
\begin{itemize}
\item[(H3)](Measurability) 
We assume that the partitions \(\xi^s\) and \(\xi^u\) are measurable. By classical results \cite{MR47744}, \(\xi^u\) induces 
a disintegration \(\mu_{\xi^u}\) of \(\mu\) in conditional probability measures \(\nu_{W^u}\). We assume that \(\nu_{W^u}\) is equivalent to \(\widehat{\bold m}_{W^u}\), uniformly in \(W^u \in \xi^u\). More precisely, there exists \(C>1\) such that, for a.e.\ \(W^{u} \in \xi^u\),
\begin{equation}\label{eq:equivalent-condiditonal-measure}
C^{-1} \le \frac{d\nu_{W^u}}{d\widehat{\bold m}_{W^u}} \le C.
\end{equation}
The same is true for the partition $\xi^s$ into stable manifolds.
\end{itemize}
(H3) means that there exists a measure space \((\cM_{\xi_u}, \fA_{\xi^u}, \mu_{\xi_u})\) and, for a.e.\ \(W^u \in \xi^u\), measures \(\nu_{W^u}\) supported on \(W^u\) such that, for any \(A \in \fA\),
\begin{equation}\label{eq:H3-meaning}
\mu(A) = \int_{\cM_{\xi^{u}}} \nu_{W}(A\cap W) \, d\mu_{\xi^{u}}(W).
\end{equation}

We now define Cantor rectangles following \cite[Section 7.11]{MR2229799}. For any \(x,y \in \cM\), we set
\[
[x,y] = W^s(x) \cap W^u(y).
\]
A subset \(\cantor\) is called a \emph{Cantor rectangle} if, for any \(x,y\in \cantor\), we have \(\emptyset \neq [x,y] \subset \cantor\). We also require that for each Cantor rectangle \(\cantor\), there exists \(c(\cantor)>0\) such that, for all \(x \in \cantor\), we have \(|W^u(x)| \ge c(\cantor)\). For any Cantor rectangle \(\cantor\), set \(\cantor^0 = \cantor\) and define inductively
\[
\cantor^{n+1} = \bigcup_{x \in \cantor^n} \left( W^s(x) \cup W^u(x) \right).
\]
Finally, set \(\underline \cantor = \cup_{n=0}^{\infty}\cantor^n\). We call \(\underline \cantor\) an \emph{enlarged Cantor rectangle}. Notice that \(\underline \cantor\) is saturated by LSMs and LUMs in the sense that \(\underline \cantor = \cup_{x \in \underline \cantor}W^s(x) = \cup_{x \in \underline \cantor}W^u(x)\).

\begin{itemize}
    \item [(H4)](Cantor rectangles) There exists a countable sequence \(\{\cantor_k\}_{k=1}^{\infty}\) of Cantor rectangles such that \(\cM = \cup_{k=1}^{\infty}\cantor_k\) mod 0. Moreover,\footnote{We are also assuming that \(\cantor_k\) and \(\underline \cantor_k\) are measurable.} we assume that \(\mu(\underline \cantor_k)<\infty\), for all \(k \in \bN\).
\end{itemize}
Without loss of generality, we assume that \(\mu(\cantor_k) >0\) for all \(k\) as well.
\begin{itemize}
\item[(H5)](Distortion)
For any \(\cC^1\) curve \(W\) and \(x\in W\), let \(\cT_{x}W\) be the tangent space to \(W\) at \(x\). For all \(v \in \cT_{x}W\) and \(n \in \mathbb Z\), we denote by
\[
    \cJ_{W} \cF^{n} = \frac{\|D_x \cF^{n}v\|}{\|v\|}
\]
the expansion/contraction coefficient along \(W\) at \(x\) (whenever defined). We assume that there exists $D>1$ such that, for a.e.\ $W \in \xi^u$ and all $y, z \in W$, \(n \in \bN\),
\begin{equation}
 \label{eq:dis1}
 D^{-1} \le \biggl | \frac{\mathcal{J}_{W}\cF^{-n}(y)}{\mathcal{J}_{W}\cF^{-n}(z)} \biggr | \le D.
\end{equation}
\end{itemize}

\begin{itemize}
\item[(H6)](Absolute continuity) Let $W^{1}, W^{2} \in \xi^u$ and denote, for $i = 1,2$,
\begin{equation*}
W^{i}_{*} = \{ x \in W^{i} \,:\, W^{s}(x)\cap W^{3-i} \ne \emptyset \}.
\end{equation*}
We assume that, for every $x \in W^{1}_{*}$, $W^{s}(x)\cap W^2$ consists of a single point, which we call $\bold h(x)$. This defines a map $\bold h: W_{*}^{1} \to W_{*}^{2}$, called \emph{holonomy map}. We assume that the measure \(\bold h^{*}\bold m_{W_{*}^2}\) given by the pull-back of \(\bold m_{W_{*}^2}\) by \(\bold h\) is ``uniformly equivalent'' to \(\bold m_{W_{*}^1}\). This means that there exists \(\tilde D >1\) such that, denoting \(J \bold h = d( \bold h^{*} \bold m_{W_{*}^2})/d\bold m_{W_{*}^1} \), 
\begin{equation}\label{eq:bound-holonomy}
        \tilde D^{-1}\le J \bold h \le \tilde D.
\end{equation}
The same is true for \(W^1, W^2 \in \xi^s\) and considering the map obtained by sliding along unstable manifolds.
\end{itemize}

As a direct consequence of \eqref{eq:H3-meaning} and (H6), for any \(B \subseteq W \in \xi^u\) such that the sets below are measurable,
\begin{equation}\label{eq:meaning-H6}
    \bold m_{W}(B) > 0 \Longleftrightarrow \mu\biggl( \bigcup_{x \in B} W^s(x)\biggr) >0.
\end{equation}
The last assumption is crucial for our technique of proving the K-property.
\begin{itemize}
\item[(H7)](Recurrence) We assume that the dynamical system $(\mathcal{M},  \fA, \mu, \cF)$ is recurrent: there exists no \(A \in \fA_+\) such that the sets \(\{\cF^k (A)\}_{k \in \bN_0}\) are disjoint mod 0.
\end{itemize}

We are now ready to state the main result of this paper.

 \begin{theorem}\label{thm:strth} (K-Decomposition Theorem) Let $\mathcal{F}: \mathcal{M} \rightarrow \mathcal{M}$ be a map satisfying hypothesis (H1)-(H7). Then $\mathcal{F}$ admits a K-decomposition. Precisely, there exists a finite or countable partition \(\mathcal{M} = \bigcup_{i }\mathcal{E}_{i}\), such that 
 \renewcommand{\labelenumi}{\alph{enumi})}
 \begin{enumerate}
 \item each $\mathcal{E}_{i}$ is $\mathcal{F}$-invariant, $\mu(\mathcal{E}_{0}) = 0$ and $\mu(\mathcal{E}_{i}) > 0$ for $i \ge 1$;
 \item the restriction $\mathcal{F}: \mathcal{E}_{i} \rightarrow \mathcal{E}_{i}$ is ergodic for every $i \ge 1$.
 \\
 Furthermore, for every $i \ge 1$ there exists \(k_i \in \bN\) and a finite partition
 \begin{equation}
 \mathcal{E}_{i} = \mathcal{E}_{i}^{0} \cup ... \cup \mathcal{E}_{i}^{k_{i}-1}
 \end{equation}
 such that
  \item $\mathcal{F}(\mathcal{E}_{i}^{j}) = \mathcal{E}_{i}^{j+1}$ and $\mathcal{F}(\mathcal{E}_{i}^{k_{i}-1}) = \mathcal{E}_{i}^{0}$;
 \item the map $\mathcal{F}^{k_{i}}: \mathcal{E}_{i}^{j} \rightarrow \mathcal{E}_{i}^{j}$ is K-mixing.
 \end{enumerate}
 \end{theorem}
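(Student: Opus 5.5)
The plan is to run a Hopf-type argument in which ergodicity is replaced by triviality of a tail $\sigma$-algebra, and then use the criterion of \cite{MR3009111} in invertible form. Let $\fS$ be the $\sigma$-algebra generated by the partition $\xi^u$, i.e.\ the complete $\sigma$-algebra of measurable unions of LUMs. By \eqref{eq:non-unif-hyp}, $\cF^{-1}(W^u(\cF x))\subseteq W^u(x)$. Hence every element of $\xi^u$ is a union of images of elements of $\xi^u$, so $\fS \subseteq \cF\fS$. Moreover, $\bigvee_n \cF^n\fS$ is generated by the partitions $\cF^n\xi^u$, whose elements $\cF^n(W^u(\cF^{-n}x))$ are contained in $W^u(x)$. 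Symmetrically, by \eqref{eq:non-unif-hyp} these shrink to points along the unstable direction. Combining this with the $\sigma$-algebra generated by $\xi^s$, which is $\cF$-invariant in the reverse sense, the join separates points, giving $\bigvee_{n}\cF^n\fS=\fA$ mod $0$. The substantial content is therefore the tail $\fT=\bigcap_n\cF^{-n}\fS$. Since $\fS\subseteq\cF\fS$, the family $\cF^{-n}\fS$ is decreasing, and its elements are unions of the sets $\cF^{-n}W^u(\cF^n x)$, which are pieces of unstable manifolds. I will show that $\fT$ is atomic, with finitely many atoms inside each ergodic component, and that these atoms are permuted cyclically by $\cF$.

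The key step is a density argument on Cantor rectangles. Take $A\in\fT_+$. Then $A\in\cF^{-n}\fS$ for every $n$, so $\cF^{n}A$ is $\fS$-measurable, i.e.\ a union of whole LUMs mod $0$, and the same holds for $A$ itself. By (H4), $A$ meets some $\cantor=\cantor_k$ in positive measure. By (H3) and \eqref{eq:H3-meaning}, there is $W\in\xi^u$ through $\cantor$ such that $\bold m_W(A\cap W\cap\cantor)>0$, and in fact $W\subseteq A$ mod $0$. Now use recurrence (H7) applied to the set of density points of $\cantor$. Take a density point $x$ of $A\cap\cantor$ whose backward orbit returns to $\cantor$ along $n_j\to\infty$. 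Along these returns, $\cF^{-n_j}$ contracts unstable curves, so by bounded distortion (H5), the set $\cF^{-n_j}A$ contains a full LUM of $\cantor$ (length at least $c(\cantor)$) up to a relative error that tends to $0$. Here I use the $\fS$-measurability of $\cF^{-n_j}A$ relative to the finer partition, which makes it a union of full unstable pieces. Sliding along stable manifolds, the absolute continuity (H6) and \eqref{eq:meaning-H6} then show that $\cF^{-n_j}A$ is $\fS$-measurable. Its complement intersected with $\cantor$ has small measure relative to $\mu(\cantor)$, and since $\cF^{-n_j}A$ also lies in $\fT$, I conclude $\underline\cantor\subseteq\cF^{-m}A$ mod $0$ for some $m$. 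This uses that a $\fT$-set is saturated by both LUMs and, via the tail, by stable manifolds, so it contains the saturation $\underline\cantor$; here $\mu(\underline\cantor)<\infty$ makes the limiting argument legitimate. I expect this step to be the main obstacle: one must make precise that tail sets are saturated by stable manifolds as well, and pass from ``almost fills $\cantor$'' to ``contains $\underline\cantor$'' without any finite invariant measure.

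With this dichotomy in hand, the rest is combinatorial. Each $\underline\cantor_k$ meets only finitely many disjoint $\fT$-sets, each containing a $\cF$-iterate of $\underline\cantor_k$, and their measures are bounded below. It follows that $\fT$ restricted to the $\cF$-invariant set $\bigcup_n\cF^n\underline\cantor_k$ is generated by finitely many atoms. $\cF$ permutes these atoms, and by minimality it does so in a single cycle of length $k_i$. Define $\mathcal E_i$ as the distinct invariant sets $\bigcup_{n\in\Z}\cF^n\underline\cantor_k$; they are disjoint or equal mod $0$ by the same absorption argument. Let $\mathcal E_i^j$ be the atoms, and let $\mathcal E_0$ be the null remainder, which is null by (H4). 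Ergodicity in b) follows since invariant sets lie in $\fT$ and are unions of the cycle. Item c) is the cyclic permutation. For d), apply the verified conditions to $\cF^{k_i}$ on $\mathcal E_i^j$ with $\fS$ restricted there: the tail of $\cF^{k_i}$ coincides with that of $\cF$ intersected with $\mathcal E_i^j$, which is trivial because $\mathcal E_i^j$ is an atom.
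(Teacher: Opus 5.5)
The overall architecture (show that the tail is atomic, that its atoms are permuted cyclically, then restrict) matches the paper. However, there are genuine gaps in both the set-up and the key step.

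\textbf{The $\sigma$-algebra runs in the wrong time direction.} This breaks the set-up before the main step. Take $\fS$ to be the $\sigma$-algebra of measurable unions of LUMs.
\begin{enumerate}
\item The first line of \eqref{eq:non-unif-hyp} gives $W^u(\cF z)\subseteq\cF(W^u(z))$. So forward images of unions of LUMs are again unions of LUMs, which means $\cF\fS\subseteq\fS$, i.e.\ $\fS\subseteq\cF^{-1}\fS$, not $\fS\subseteq\cF\fS$.
\item The elements $\cF^n(W^u(\cF^{-n}x))$ of $\cF^n\xi^u$ \emph{contain} $W^u(x)$; it is $\cF^{-n}(W^u(\cF^n x))$ that shrinks. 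Hence $\bigvee_{n\ge 0}\cF^n\fS=\fS\neq\fA$. Adjoining the stable $\sigma$-algebra does not repair this, since only $\fS$ enters the join.
\item Most seriously, $\cF^{-n}\fS$ is \emph{increasing}. Your own description of its elements as unions of the shrinking pieces $\cF^{-n}(W^u(\cF^n x))$ shows this. So your tail $\bigcap_{n\ge0}\cF^{-n}\fS$ is $\fS$ itself, which is not atomic.
\end{enumerate}
This is not a harmless relabeling. Running everything for $\cF^{-1}$ would need distortion bounds for $\cF^{n}$ along stable manifolds and a lower bound on stable lengths in Cantor rectangles. But (H5) and the length condition in the definition of Cantor rectangle concern unstable manifolds only. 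Moreover, passing from K for $\cF^{-1}$ to K for $\cF$ is classically done via entropy, which is unavailable here.

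The right choice is the $\sigma$-algebra generated by $\xi^s$. Then $\fS\subseteq\cF\fS$ and $\bigvee_n\cF^n\fS=\fA$ follow from the two lines of \eqref{eq:non-unif-hyp}. A tail set $A$ then has $\cF^n(A)$ equal mod $0$ to a union of LSMs $A^n$ for \emph{every} $n\in\Z$, with $W^s(\cF^n x)\subseteq A^n$ for a.e.\ $x\in A$ (Lemma~\ref{lem:global-manifolds}).

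\textbf{The density step fails, and it claims more than can be proved.} Backward iteration contracts unstable curves. So density of $A$ at $x$ along $W^u(x)$ tells you nothing about $\cF^{-n_j}(A)$ on a whole LUM of length at least $c(\cantor)$. The assumption that tail sets are saturated by both foliations is exactly what is not available a priori, as you note yourself.

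The working mechanism uses only stable saturation:
\begin{enumerate}
\item Take $x$ an $\mathbf m_{W^u}$-density point of a full-measure subset of $\cF^j(A)\cap\cantor_k$ whose \emph{forward} orbit returns to $\cantor_k$ infinitely often.
\item Forward iteration expands, and density plus (H5) give $\widehat{\mathbf m}_{W^u(\cF^n x)}\bigl(W^u(\cF^n x)\setminus\cF^{n+j}(A)\bigr)\to 0$.
\item At a return time, the brush of LSMs through $W^u(\cF^n x)$ covers $\cantor_k$. Since $\cF^{n+j}(A)$ is LSM-saturated, absolute continuity together with $|W^u|\ge c(\cantor_k)$ yields $\mu(\cantor_k\setminus\cF^{m}(A))\le\ve$ with $m=n+j$.
\end{enumerate}
This gives only approximate filling, not $\underline\cantor\subseteq\cF^{-m}(A)$. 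Your claim of ``finitely many disjoint tail sets with measures bounded below'' therefore has no basis.

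What closes the gap in the paper is the following.
\begin{enumerate}
\item Fix $d$ with $\mu(\cF^d(\cantor_k)\cap\cantor_k)>0$, using (H7).
\item Lemma~\ref{lem:auto-int-abstract} turns approximate filling into $\mu(\cF^d(A)\cap A)>0$ for every $A\in(\fT\cap\Erg_1(\underline\cantor_k))_+$.
\item Lemma~\ref{lem:criterion} then gives $\fT\cap\Erg_1(\underline\cantor_k)\subseteq\fI_d$.
\item $\fI_d$ is atomic, with atoms $\Erg_d(\underline\cantor_k)$, by a Hopf argument for the return map of $\cF^d$ to $\underline\cantor_k$, using $\mu(\underline\cantor_k)<\infty$.
\end{enumerate}
That same Hopf argument is what identifies the ergodic components in a) and b). Your ``absorption argument'' rests on the flawed step. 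Once $\fT$ is known to be atomic, your treatment of d) is fine.
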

 
 \begin{corollary}
 \label{corlStr}
 Let $\mathcal{F}: \mathcal{M} \rightarrow \mathcal{M}$ be a map satisfying hypothesis (H1)-(H7). Assume that $\mathcal{F}^{n}$ is ergodic for any $n \ge 1$. Then $\mathcal{F}$ is K-mixing.
 \end{corollary}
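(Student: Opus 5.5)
The corollary should follow from Theorem \ref{thm:strth} once we show that ergodicity of every power forces the decomposition to consist of a single component with $k_1 = 1$. The argument is a short counting on the structure given by items a)--d).

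\textbf{Step 1: a single ergodic component.} Apply Theorem \ref{thm:strth} to get the partition $\cM = \bigcup_i \cE_i$. Each $\cE_i$ with $i \ge 1$ is $\cF$-invariant and has positive measure. Since $\cF = \cF^1$ is ergodic by assumption, every invariant set is null or conull. So each $\cE_i$, $i\ge1$, has null complement. The $\cE_i$ are disjoint with $\mu(\cE_i)>0$, so there can be only one index $i\ge 1$, say $i=1$. Together with $\mu(\cE_0)=0$, this gives $\cE_1 = \cM$ mod $0$.

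\textbf{Step 2: the cycle length is one.} Let $k = k_1$. By item c), $\cF^{k}(\cE_1^0) = \cE_1^{0}$, so $\cE_1^0$ is $\cF^{k}$-invariant. The pieces $\cE_1^j$ are images of one another under the invertible, measure-class-preserving map $\cF$. Hence $\mu(\cE_1^j)>0$ for all $j$ as soon as one of them is positive, and their union is $\cM$ mod $0$.

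If $k \ge 2$, then $\cE_1^0$ is an $\cF^k$-invariant set of positive measure whose complement $\cE_1^1\cup\dots\cup\cE_1^{k-1}$ also has positive measure. This contradicts the ergodicity of $\cF^{k}$. Therefore $k_1=1$ and $\cE_1^0=\cE_1=\cM$ mod $0$.

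\textbf{Step 3: conclusion.} Item d) now says that $\cF^{1} = \cF : \cM \to \cM$ is K-mixing (modulo the null set $\cE_0$, which does not affect the K-property).

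\textbf{Possible gap.} The only point needing care is that the pieces $\cE_1^j$ are nonnull and disjoint mod $0$. Disjointness is part of the statement that they form a partition. Nonnullness follows because $\cF$ maps null sets to null sets, $\mu$ being equivalent to area and $\cF$ a piecewise diffeomorphism by (H1). I expect no real obstacle beyond this.
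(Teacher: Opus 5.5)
Your proposal is correct and follows the paper's route. Ergodicity of $\cF$ collapses the decomposition of Theorem \ref{thm:strth} to the single component $\cE_1=\cM$ mod $0$. Ergodicity of $\cF^{k_1}$ then forces $k_1=1$, and item d) gives that $\cF$ is K-mixing. Your check that the cyclic pieces $\cE_1^j$ are disjoint and non-null, which uses the non-singularity of $\cF$, spells out what the paper's short proof leaves implicit.
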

 \proof
 By ergodicity of \(\cF\) there exists only one ergodic component. Hence, according to the notation of Theorem \ref{thm:strth}, $\mathcal{M} = \mathcal{E}_{1}$ mod $0$. Moreover, by parts b) and c) of the same theorem and the ergodicity of all the powers of \(\cF\) we have that $\mathcal{E}_{1} = \mathcal{E}_{1}^{0}$ mod $0$. Finally, by point d), $\mathcal{F}: \mathcal{E}_{1}^{0} \to \mathcal{E}_{1}^{0}$ is K-mixing.
 \qed

\section{Proof of the main theorem}\label{sec:proof}
Recall that we are considering a dynamical system on \((\cM, \fA, \mu)\) where \(\cM\) is a Riemannian manifold, \(\fA\) is the completion of its Borel \(\sigma\)-algebra and \(\mu\) is equivalent to the area form. We denote by \(\fS\) the $\sigma$-algebra generated by \(\xi^s\),
\begin{equation}\label{eq:stable-sigma}
\begin{split}
\fS = \{A \in \fA \,:\, A = B \text{ mod }0 \text{ for }& \text{some \(B\in \fA\)}\\
&\text{which is a union of elements of \(\xi^s\)}\}.
\end{split}
\end{equation}
We have that \(\fS\) is a complete sub-$\sigma$-algebra of \(\fA\). We start by proving the first two properties of \eqref{def:K} for this choice of \(\fS\). For a partition \(\xi\) of \(\cM\) we denote \(\cF \xi = \{\cF(W) \,:\, W \in \xi\}\).

\begin{lemma}\label{lem:first-prop-K}
\(\cF \fS \supseteq \fS\).
\end{lemma}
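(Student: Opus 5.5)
The plan is to reduce the inclusion of $\sigma$-algebras to a statement about partitions: the partition $\cF\xi^s$ is \emph{finer} than $\xi^s$ (mod 0). In other words, every element of $\xi^s$ is, up to a null set, a union of elements of $\cF\xi^s$. Once this holds, any $B\in\fA$ that is a union of elements of $\xi^s$ is also a union of elements of $\cF\xi^s$. Then $\cF^{-1}(B)$ is a union of elements of $\xi^s$, so $\cF^{-1}B\in\fS$, and hence $B=\cF(\cF^{-1}B)\in\cF\fS$. Measurability of $\cF^{-1}(B)$ follows because $\cF$ is a diffeomorphism off the null set $\cS_1$ by (H1). Completeness takes care of the mod 0 ambiguity: if $A=B$ mod 0, then $\cF^{-1}A=\cF^{-1}B$ mod 0, since $\cF$ is a diffeomorphism off singularities and therefore preserves the class of null sets with respect to a measure equivalent to area.

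The key step is the refinement property, which I would read off from the first line of \eqref{eq:non-unif-hyp} with $n=1$. Take $x\in\cM$ with $W^s(x)\neq\emptyset$, and set $y=\cF^{-1}(x)$. Then \eqref{eq:non-unif-hyp} gives $\cF(W^s(y))\subseteq W^s(x)$. So for a.e.\ $x$, the element of $\cF\xi^s$ containing $x$, namely $\cF(W^s(\cF^{-1}x))$, is contained in the element of $\xi^s$ containing $x$. Applying this to every point $z\in W^s(x)$ (for a.e.\ $z$), we get $W^s(x)=\bigcup_{z\in W^s(x)}\cF(W^s(\cF^{-1}z))$ up to the points lacking stable manifolds, because each such piece contains $z$ and lies in $W^s(z)=W^s(x)$. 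Consequently, any union $B$ of elements of $\xi^s$ equals, mod 0, the union of the elements of $\cF\xi^s$ meeting it, and these are exactly $\cF$ applied to elements of $\xi^s$ contained in $\cF^{-1}B$.

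The main obstacle is the bookkeeping of null sets. The points without stable manifolds, and the points of $\cS_{\pm1}$, must not spoil the statement that $\cF^{-1}B$ is (mod 0) a union of elements of $\xi^s$ lying in $\fA$. I would handle this by letting $N$ be the null set of points where $W^s$ is empty or the orbit hits $\cS_{\pm\infty}$, and working on the invariant full-measure set $\cM\setminus\bigcup_{n\in\Z}\cF^n N$. On this set both partitions are honest partitions, the inclusion above is exact, and measurability of the relevant unions follows from (H3) (measurability of $\xi^s$) together with $\cF$ being a measurable isomorphism mod 0.
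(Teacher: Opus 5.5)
Your proposal is correct and follows essentially the same route as the paper: both use the first line of \eqref{eq:non-unif-hyp} with $n=1$ to show that every element of $\xi^s$ is, up to the null set of points $x$ with $W^s(\cF^{-1}(x))=\emptyset$, a union of elements of $\cF\xi^s$, and then conclude via the mod-$0$ definition \eqref{eq:stable-sigma} and the non-singularity of $\cF$. Your explicit passage through $\cF^{-1}(B)\in\fS$ spells out the final step the paper leaves implicit; note that measurability there comes from completeness of $\fA$ rather than from (H3), since $\cF^{-1}(B)$ differs from a union of LSMs only by points that have no LSM.
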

\proof
For any \(W \in \xi^s\), set
\[
E_W = W \triangle \bigcup_{x \in W} \cF(W^s(\cF^{-1}(x))).
\]
By the first line of \eqref{eq:non-unif-hyp}, for all \(W \in \xi^s\) we have that \(W \supseteq \cup_{x \in W} \cF(W^s(\cF^{-1}(x)))\). Moreover, \(W \setminus \cup_{x \in W} \cF(W^s(\cF^{-1}(x)))\) consists of all points \(x\in W\) such that \(\cF^{-1}(x)\) does not have a LSM. (Indeed, if \(W^s(\cF^{-1}(x))\) is non-empty, it contains \(\cF^{-1}(x)\) so that \(\cF (W^s(\cF^{-1}(x)))\) contains \(x\)). Take a set \(A \in \fS\) and let \(A_0 = A\) mod \(0\) such that \(A_0\) is a union of LSMs. If \(\cup_{W \in \xi^s, \text{ }W \subseteq A_0}\bigr(W \setminus \cup_{x \in W} \cF(W^s(\cF^{-1}(x)))\bigl)\) were non-empty mod \(0\), a positive measure of points would not have a LSM (here we also use the non-singularity of \(\cF\)). This contradicts (H2). In summary, we have shown that \(\cup_{W \in \xi^s, \text{ }W\subseteq A_0}E_W = \emptyset\) mod \(0\). Therefore, any measurable union of LSMs is equal, modulo a null set, to a union of elements of \(\cF\xi^s\). Recalling  \eqref{eq:stable-sigma}, the last assertion proves the statement.
\qed

\begin{lemma}\label{lem:second-K-property} $\bigvee_{n=0}^{\infty}\mathcal{F}^{n}\fS = \mathfrak A$.
\end{lemma}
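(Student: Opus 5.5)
Let \(\fB = \bigvee_{n=0}^{\infty}\cF^{n}\fS\). Since \(\fB \subseteq \fA\) trivially, we must show \(\fA \subseteq \fB\). The plan is to show that \(\fB\) contains enough sets to separate points of a full-measure set, and then invoke the fact that \((\cM,\fA,\mu)\) is a Lebesgue space. Recall that \(\cF^n\fS\) is the $\sigma$-algebra of (mod \(0\)) measurable unions of elements of \(\cF^n\xi^s\). Each element \(\cF^n(W^s(\cF^{-n}(x)))\) is a subcurve of \(W^s(x)\), and by the second line of \eqref{eq:non-unif-hyp} its length tends to \(0\) as \(n\to\infty\).

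\textbf{Step 1.} We first show that \(\fB\) contains the unstable $\sigma$-algebra as well. Consider the partition \(\xi^u\) and, for each \(n\), the partition \(\cF^{-n}\xi^u\), whose elements are pieces of LUMs shrinking to points by \eqref{eq:non-unif-hyp}. Symmetrically to Lemma \ref{lem:first-prop-K}, unions of elements of \(\cF^{-n}\xi^u\) are of the form \(\cF^{-n}\) of unstable-saturated sets. Rather than pursue this, we take the more direct route of Step 2.

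\textbf{Step 2.} We show that \(\fB\) separates points. Fix two points \(p\neq q\) in a full-measure set where LSMs exist and all iterates are defined.
\begin{itemize}
\item[(i)] If \(q\notin W^s(p)\), then \(W^s(p)\) and \(W^s(q)\) are disjoint elements of \(\xi^s\), and they are separated by a countable family of sets of \(\fS\). To see this, use measurability of \(\xi^s\) (H3): a measurable partition of a Lebesgue space admits a countable basis of saturated measurable sets separating elements mod \(0\).
\item[(ii)] If \(q\in W^s(p)\), choose \(n\) so large that \(\cF^n(W^s(\cF^{-n}p))\) has length less than \(|p-q|\). Then \(p\) and \(q\) lie in different elements of \(\cF^n\xi^s\). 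Applying (i) to the measurable partition \(\cF^n\xi^s\), using that \(\cF^n\) is a nonsingular measurable bijection mod \(0\) (H1), they are separated by sets in \(\cF^n\fS\).
\end{itemize}
Hence the partition into points is the supremum \(\bigvee_n \cF^n\xi^s\) mod \(0\). By Rokhlin's theory of measurable partitions in Lebesgue spaces, the complete $\sigma$-algebra generated by a countable separating family of measurable sets is \(\fA\) itself. So \(\fB=\fA\).

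\textbf{Main obstacle.} The delicate point is making the "countable separating family" argument rigorous mod \(0\). Concretely, we must check that \(\bigvee_n \cF^n\xi^s\) equals the point partition \(\varepsilon\) mod \(0\), rather than merely pointwise on a set that might not be measurable. To do so we would:
\begin{itemize}
\item[(a)] pick countable bases \(\{S_j^{(n)}\}_j\) of \(\cF^n\fS\), which exist since \(\cF^n\xi^s\) is measurable;
\item[(b)] note that the intersection of the elements of \(\cF^n\xi^s\) containing \(x\) is, for \(x\) in a full-measure set, the decreasing intersection of curves whose lengths go to \(0\), hence equal to \(\{x\}\);
\item[(c)] conclude via Rokhlin's correspondence between complete sub-$\sigma$-algebras and measurable partitions: the partition associated with \(\fB\) is \(\bigvee_n \cF^n\xi^s=\varepsilon\), so \(\fB=\fA\).
\end{itemize}
Some care is needed with the shrinking-curve argument in (b). We must ensure the nested curves \(\cF^n(W^s(\cF^{-n}x))\) are genuinely decreasing, which follows from the first line of \eqref{eq:non-unif-hyp} applied to \(\cF^{-n}x\). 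We also need the exceptional null sets, namely points lacking LSMs at some backward iterate, to be countably many null sets; this holds by (H2) and the nonsingularity of \(\cF\).
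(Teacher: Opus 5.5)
Your proof is correct, but it takes a different route from the paper. The paper works directly with a generating family, the open balls. For a ball $B$ it takes the $\cF^n\xi^s$-saturations $B_n=\bigcup_{x\in B}\cF^n(W^s(\cF^{-n}(x)))\in\cF^n\fS$. An Egorov-type uniformity of the shrinking in \eqref{eq:non-unif-hyp} then gives $\mu(B_n\setminus B)\to 0$, so $B=\bigcap_n B_n$ mod $0$, and every ball, hence all of $\fA$, lies in the complete join.

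You instead take a countable separating family $\{S_j\}$ of measurable $\xi^s$-saturated sets, which exists by the measurability in (H3). You show that $\{\cF^n(S_j)\}_{n,j}$ separates the points of a full-measure set. You then invoke Rokhlin's theorem that, in a Lebesgue space, such a family generates $\fA$ mod $0$. Equivalently, apply Lusin--Souslin to the injective coding map into $\{0,1\}^{\bN}$; pass to an equivalent probability measure first, since $\mu$ is only $\sigma$-finite.

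Your route needs only the pointwise shrinking of the stable pieces plus measurability of $\xi^s$. It sidesteps two points the paper treats briefly. One is the measurability of the saturated sets $B_n$. The other is that stable pieces through the exceptional set $M_\ve$ can be long, so their contribution to $B_n\setminus B$ is not directly bounded by $\mu(M_\ve)$. The price is reliance on a structural theorem about Lebesgue spaces. The paper's argument is hands-on and quantitative, in the same spirit as the approximation $F\mapsto F_m$ later used for Theorem \ref{thm:decay-of-correlations}.

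Two cosmetic remarks. Step 1 contributes nothing and can be deleted. The case distinction in Step 2 is superfluous: the argument of (ii) separates any two distinct good points $p,q$ as soon as $|\cF^n(W^s(\cF^{-n}(p)))|<|p-q|$, whether or not $q\in W^s(p)$.
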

\proof
The \(\subseteq\) inclusion is a consequence of the fact that \(\cF^{-1}\) is measurable and that \(\fS \subset \fA\). For the \(\supseteq\) inclusion, it is sufficient to show that the collection \(\mathcal R =\bigl\{ B(p, r) \,:\, p\in \cM, r>0 \bigr\}\) of open balls (which is a generator of \(\fA\)) belongs to \(\bigvee_{n=0}^{\infty}\mathcal{F}^{n}\fS\). Let \(B \in \cR\) the ball of radius \(r\) centered at \(p\). Set, for \(n \in \bN_0\), 
\[
B_{n} = \bigcup_{x \in B} \cF^{n}(W^s(\cF^{-n}(x)))  \in \cF^{n}\fS.
\]
Since \(\mu\) is locally finite, \(\mu(B) < \infty\). Observe that the convergence stated in the second line of \eqref{eq:non-unif-hyp} is uniform outside of a set of arbitrary small measure: for each \(\ve >0 \), there exist \(M_{\ve} \subseteq B\) with \(\mu(M_{\ve}) \le \ve\) and \(n_{\ve} \in \bN\) such that on \(B \setminus M_{\ve}\) \(|\cF^{n}(W^s(\cF^{-n}(x)))| \le \ve\) for any \(n \ge n_{\ve}\). Hence, for any such \(n\),
\[
\begin{split}
\mu(B_{n} \setminus B) \le \mu(M_{\ve}) + \mu(B_{n} \setminus (B\cup M_{\ve})) &\le \ve + \mu(\{q\in \cM \,:\, r \le |q-p| \le r + \ve\})\\
&\le \ve + o(1),
\end{split}
\]
as \(\ve \to 0\). In the last inequality we have used that \(\mu\) is locally finite and equivalent to the area. It follows that \(\mu ((\cap_n B_{n}) \setminus B) = 0\). By the completeness of \(\bigvee_{n=0}^{\infty}\mathcal{F}^{n}\fS\), this shows that \(B \in \bigvee_{n=0}^{\infty}\mathcal{F}^{n}\fS\) and we conclude the proof.
\qed
Denote by \(\fT = \bigcap_{k = 0}^{\infty} \cF^{-k}\fS\) the \textit{tail-$\sigma$-algebra} relative to \(\fS\). As can be deduced by the definition \eqref{def:K}, the tail is particularly relevant for the K-decoposition of Theorem \ref{thm:strth}. We start by observing an invariance property.
\begin{lemma}\label{lem:tail-inv} 
    \(\fT = \cF \fT = \cF^{-1}\fT\).
\end{lemma}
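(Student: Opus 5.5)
The plan is to derive both equalities from the monotonicity of the sequence \(\{\cF^{-k}\fS\}_k\), which follows from Lemma \ref{lem:first-prop-K}. Applying \(\cF^{-1}\) to the inclusion \(\fS \subseteq \cF\fS\) gives \(\cF^{-1}\fS \subseteq \fS\). Applying \(\cF^{-k}\) repeatedly then gives the nested chain
\[
\fS \supseteq \cF^{-1}\fS \supseteq \cF^{-2}\fS \supseteq \cdots .
\]
This uses only that \(\cF\) is invertible and bimeasurable mod \(0\), so that \(\cF^{-k}\) maps sub-\(\sigma\)-algebras to sub-\(\sigma\)-algebras and preserves inclusions. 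Non-singularity of \(\cF\), which comes from (H1) and \(\mu\) being equivalent to area, ensures that null sets are sent to null sets; hence completeness and the ``mod \(0\)'' identifications in \eqref{eq:stable-sigma} are respected.

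Next, since \(\cF^{-1}\) acts bijectively on sets (mod \(0\)), it commutes with countable intersections of \(\sigma\)-algebras:
\[
\cF^{-1}\fT = \bigcap_{k\ge 0}\cF^{-1}\cF^{-k}\fS = \bigcap_{k\ge 1}\cF^{-k}\fS .
\]
The chain is decreasing, so dropping the first term \(k=0\) does not change the intersection. Therefore \(\bigcap_{k\ge1}\cF^{-k}\fS = \bigcap_{k\ge0}\cF^{-k}\fS = \fT\), that is, \(\cF^{-1}\fT = \fT\).

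Applying \(\cF\) to both sides of \(\cF^{-1}\fT=\fT\) gives \(\fT = \cF\fT\), which completes the chain of equalities.

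The only point needing care is the commutation of \(\cF^{-1}\) with intersections in the presence of mod-\(0\) completions. Suppose \(A \in \cF^{-k}\fS\) for every \(k\ge1\). Then \(\cF(A)\in\cF^{-k+1}\fS\) for all \(k\ge1\), so \(\cF(A)\in\fT\) and \(A\in\cF^{-1}\fT\). The reverse inclusion is immediate. Both directions hold because \(\cF\) is a bijection mod \(0\) that preserves null sets, so I expect no real obstacle.
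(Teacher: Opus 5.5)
Your proof is correct and follows the paper's approach. The paper's proof is exactly the observation that, by Lemma \ref{lem:first-prop-K}, the sequence \(\{\cF^{-k}\fS\}_k\) is non-increasing; you have written out the details it leaves implicit, namely commuting \(\cF^{-1}\) with the intersection, dropping the \(k=0\) term to get \(\cF^{-1}\fT=\bigcap_{k\ge1}\cF^{-k}\fS=\fT\), and then applying \(\cF\) to obtain \(\fT=\cF\fT\).
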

\proof
 It is sufficient to observe that, by Lemma \ref{lem:first-prop-K}, \(\{\cF^{-k}\fS\}_{k}\) is a non-increasing sequence of $\sigma$-algebras.
\qed
Recall  the Cantor rectangles \(\cantor_k\) and the enlarged Cantor rectangles \(\underline \cantor_k\) from (H4). For any \(k, d \in \bN\), denote by \(\cF_{d,k}: \underline\cantor_k \to \underline \cantor_k\) the return map of \(\cF^d\) to \(\underline \cantor_k\). More precisely, for a.e.\ \(x \in \underline \cantor_k\), set
\begin{equation}\label{eq:return-maps}
\begin{split}
\cF_{d,k}(x) = \cF^{d n(x)}(x) \quad & \text{where} \quad n(x) = \min\{n \in \bN \,:\, \cF^{d n}(x) \in \underline \cantor_k\},\\
\cF_{d,k}^{-1}(x) = \cF^{-d m(x)}(x) \quad & \text{where} \quad m(x) = \min\{m \in \bN \,:\, \cF^{-d m}(x) \in \underline \cantor_k\}.
\end{split}
\end{equation}
By the recurrence hypothesis (H7), \(n(x)\) and \(m(x)\) are a.s.\ finite and the above maps are a.s.\ well-defined. Given a $\sigma$-algebra \(\fB\subset \fA\) and \(A \in \fA\), we denote by \(\fB \cap A = \{B \cap A \,:\, B \in \fB\}\). Recall also the definition of \(\bold m_{W}\) and \(\widehat{\bold m}_{W}\) after \eqref{eq:non-unif-hyp}.
\begin{lemma}\label{lem:hopf-on-cantor}
    For any \(k \in \bN\), \(\bigl(\underline \cantor_k, \fA \cap \underline \cantor_k, \mu, \cF_{d,k}\bigr)\) is ergodic.
\end{lemma}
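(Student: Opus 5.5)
This is a Hopf-type argument carried out on the enlarged Cantor rectangle, where the product structure of \(\cantor_k\) replaces the usual local ergodicity machinery. First I will check that \(\cF_{d,k}\) is an invertible, measure-preserving map of the finite measure space \((\underline\cantor_k,\mu)\). This uses (H4), which gives \(\mu(\underline\cantor_k)<\infty\), together with (H7) and \eqref{eq:return-maps}. Since the measure is finite, the Birkhoff ergodic theorem applies. For \(f\) continuous with compact support (or Lipschitz) restricted to \(\underline\cantor_k\), the forward and backward averages \(f^\pm=\lim_{N}\frac1N\sum_{j<N} f\circ\cF_{d,k}^{\pm j}\) exist a.e. and coincide a.e. Such \(f\) are dense in \(L^1(\underline\cantor_k)\), so it suffices to show that \(f^+\) is a.e.\ constant on \(\underline\cantor_k\).

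The central step is the Hopf argument. I claim \(f^+\) is constant along stable manifolds: if \(y\in W^s(x)\), then \(|\cF^{n}x-\cF^ny|\to0\) by \eqref{eq:non-unif-hyp}. Moreover \(W^s(x)\subset\underline\cantor_k\) by saturation, so the return times of \(x\) and \(y\) to \(\underline\cantor_k\) coincide along \(\cF^d\)-orbits, because the whole manifold \(\cF^{dn}W^s(\cF^{-dn}\cdot)\subset W^s\) either lies in \(\underline\cantor_k\) or does not. This is the delicate point. I will need that \(\underline\cantor_k\) and its complement are unions of full stable manifolds, and that forward images of stable manifolds stay inside stable manifolds. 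The latter follows from \eqref{eq:non-unif-hyp} applied at \(\cF^n x\). The former follows from the construction of \(\underline\cantor_k\), since \(W^s(y)=W^s(x)\) for \(y\in W^s(x)\) (\(\xi^s\) is a partition). Uniform continuity of \(f\) then gives \(f^+(x)=f^+(y)\) wherever both are defined. Symmetrically, \(f^-\) is constant along unstable manifolds. Because \(f^+=f^-\) a.e., I will choose a full-measure set \(G\) on which both limits exist and agree. Using (H3) and \eqref{eq:meaning-H6}, a.e.\ LUM meets \(G\) in an \(\bold m_W\)-full set, and likewise for LSMs.

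Finally I pass from one leaf to the whole of \(\underline\cantor_k\). On \(\cantor_k\), fix a typical unstable leaf \(W^u(y)\) with \(y\in\cantor_k\) and \(|W^u(y)|\ge c(\cantor_k)\). For a.e.\ \(x\in\cantor_k\), the bracket \([x,y]\) lies in \(W^s(x)\cap W^u(y)\). By (H6), the holonomy between unstable leaves through \(\cantor_k\) maps null sets to null sets. Consequently, for a.e.\ \(x\), the value of \(f^+\) at \(x\) equals its value at \([x,y]\), which equals \(f^-\) at \([x,y]\), which equals \(f^-\) on \(W^u(y)\). Therefore \(f^+\) is a.e.\ constant on \(\cantor_k\). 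I then extend to \(\cantor_k^{n}\) by induction on \(n\). Each point of \(\cantor_k^{n+1}\) lies on a stable or unstable leaf through \(\cantor_k^n\), and the invariant function is constant along such leaves. The leaves of positive measure are handled through \eqref{eq:meaning-H6} and (H6), so the null exceptional set does not propagate. This shows that \(f^+\) is constant a.e.\ on \(\underline\cantor_k\), and ergodicity follows.

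I expect the main obstacle to be the measure-theoretic bookkeeping in this last step. One has to guarantee that the exceptional null sets, both where \(f^+\neq f^-\) and where limits fail, do not fill entire leaves, so that the constancy transported by holonomies is valid a.e. This is exactly where the absolute continuity (H6), the uniform conditional densities (H3), and \eqref{eq:meaning-H6} enter. In particular I must check that the union of stable manifolds through a \(\bold m_W\)-null subset of a leaf is \(\mu\)-null, and conversely.
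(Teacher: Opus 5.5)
Your proposal is essentially the paper's proof: Birkhoff averages for the return map on the finite-measure set \(\underline\cantor_k\), return times constant along leaves because \(\underline\cantor_k\) is saturated, \(f^+\) constant on stable and \(f^-\) on unstable leaves, transfer across the product structure of the Cantor rectangle by absolute continuity, and then propagation through the sets \(\cantor_k^n\). Your base step on \(\cantor_k\) via the bracket \([x,y]\) is slightly more precise than the paper's start on \(\cantor_k^1\). The propagation step, however, is as thin in your write-up as the paper's ``iterating this argument'': already in \(\cantor_k^2\) the exceptional sets are whole leaves through \(\mu\)-null subsets of the previous level (for example an atypical \(W^s(p)\) with \(p\in\cantor_k\)), and their brushes can have positive measure, so \eqref{eq:meaning-H6}, which only controls brushes over null subsets of a single fixed leaf, does not by itself show that the exceptional set does not propagate.
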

\proof
By definition of enlarged Cantor rectangle, if \(x \in \underline \cantor_k\), then \(W^u(x)\cup W^s(x) \subset \underline \cantor_k\). Moreover, by the first line of \eqref{eq:non-unif-hyp} (and the non-singularity of \(\cF\) and its inverse), for all \(n \in \bN\) and \(x\), we have that \(\cF^n (W^s(x)) \subseteq W^s (\cF^n (x))\) and \(\cF^{-n} (W^u(x)) \subseteq W^u (\cF^{-n} (x))\). The last two considerations imply \(n(x)\) and \(m(x)\) are finite and constant on \(W^s(x)\) and \(W^u(x)\) for a.e.\ \(x \in \underline \cantor_k\). Therefore for a.e.\ \(W^s \subset \underline \cantor_k\) and any \(z_1, z_2 \in W^s\),
\begin{equation}\label{eq:diamter-return-map}
\begin{split}
\lim_{n \to \infty} |\cF_{d,k}^n(z_1) - \cF_{d,k}^{n}(z_2)| \le \lim_{n \to + \infty} |\cF^n (W^{s})|  =0.
\end{split}
\end{equation}
By the analogous argument, for a.e.\ \(W^u \subset \underline \cantor_k\) and any \(w_1, w_2 \in W^u\),
\begin{equation}\label{eq:diamter-return-map1}
\begin{split}
&\lim_{n \to \infty} |\cF_{d,k}^{-n}(w_1) - \cF_{d,k}^{-n}(w_2)| \le \lim_{n \to + \infty} |\cF^{-n} (W^{u})|  =0.
\end{split}
\end{equation}
To conclude the proof, we apply a standard Hopf argument. We refer to \cite[Chapter 6]{MR2229799} or \cite[Proposition 10.2]{MR1346498} for the argument in full details. For \(f : \underline \cantor_k  \to \bR\), \(x \in \underline \cantor_k\), set
\[
f^{+}(x) = \lim_{n\to \infty} \frac{\sum_{j=0}^{n-1}f(\cF_{k,d}^{j}(x))}{n}, \qquad f^{-}(x) = \lim_{n\to \infty} \frac{\sum_{j=0}^{n-1}f(\cF_{k,d}^{-j}(x))}{n}.
\]
By (H4), \(0<\mu(\underline \cantor_k) <\infty\). Therefore, to prove ergodicity, it is sufficient to show that \(f^+\) is constant mod \(0\) for all \(f\) belonging to a dense set in \(L^1(\underline \cantor_k)\), which for us would be the set of uniformly continuous functions. By the Birkhoff ergodic theorem for almost all \(x \in \underline \cantor_k\) the limits \(f^+ (x)\) and \(f^{-}(x)\) exist and it's a basic fact that they are equal mod \(0\). Moreover, by \eqref{eq:diamter-return-map} and \eqref{eq:diamter-return-map1} and the fact that \(f\) is uniformly continuous, we have that \(f^+\) is constant on a.e\ stable manifold  and \(f^{-}\) is constant on a.e.\ every unstable manifold in \(\underline \cantor_k\).

Recall the definition of \(\cantor_k^n\) before (H4). By the paragraph above and (H3) it follows that a.e.\ LSM and LUM \(W \in \cantor_k^1\) has the property that \(f^+ = f^-\)  for a.e.\ point w.r.t.\ the internal Lebesgue measure \(\bold m_W\). 
Call \(\bold T_u\) the sets of LUMs with the above property and such that \(f^{-}\) is constant on them. Let \(W_*^u \in \bold T_u\). We have that \(f^{+}(x) = f^{-}(x) = A \in \bR\) for \(\bold m_{W_*^u}\)-almost any \(x \in W_*^u\). Moreover, by definition of Cantor rectangle, all the LSMs \(W^s \subset  \cantor_k^1\) intersect \(W_*^u\). Hence, using \eqref{eq:meaning-H6} (absolute continuity of the stable foliation) and the fact that \(f^{+}\) is constant on a.e.\ LSM, one deduces that \(f^{+} = A\) on a.e. LSM in \(\cantor_k^1\). By a similar argument, we have that \(f^- = A\) on a.e. LUM of \(\cantor_k^1\). Because \(f^+ = f^{-}\) a.e., the above shows that \(f^+\) is constant mod \(0\) on \(\cantor_k^1\). Iterating this argument, one obtains that \(f^+\) is constant mod \(0\) on \(\cantor_k^n\) for each \(n\). This concludes the proof.
\qed
For any \(d \in \bN\), \(B \in \fA\) denote by $\mathfrak{I}_d$ the sub-$\sigma$-algebra of \(\cF^{d}\)-invariant sets, 
\[
\mathfrak{I}_d = \{A\in\mathfrak{A} \,:\, \mathcal{F}^{-d}(A) = A \text{ mod 0} \},
\]
and by \(\Erg_d(B)\) the \textit{\(\cF^d\)-ergodic hull} of \(B\), that is, the smallest set in \(\fI_d\) containing \(B\):
\begin{equation}\label{eq:ergodic-hull}
    \Erg_d(B) = \bigcup_{j=-\infty}^{\infty} \cF^{dj}(B).
\end{equation}
We say that \(P\) is an atom of a sub-$\sigma$-algebra \(\fB \subseteq \fA\) if \(\mu(P)>0\) and there exists no \(E \subseteq P\), \(E \in \fB\) such that \(0<\mu(E) < \mu(P)\). We say that \(\fB\) is atomic if it is generated by atoms. Recall also the notation \(\fB_+\) for the sets of \(\fB\) of positive measure.

\begin{lemma}\label{lem:ergodic-components}
    For any \(d \in \mathbb Z\setminus\{0\}\) and \(k \in \bN\), \(\Erg_d(\underline \cantor_k)\) is an atom of \(\fI_d\). In particular, \(\fI_d\) is atomic.
\end{lemma}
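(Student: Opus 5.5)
The plan is to derive everything from the ergodicity of the induced map $\cF_{d,k}$ on $\underline\cantor_k$ (Lemma \ref{lem:hopf-on-cantor}). Since $\cF^{-d}$ generates the same invariant sets as $\cF^{d}$, we have $\fI_{-d}=\fI_d$ and $\Erg_{-d}=\Erg_d$, so it suffices to treat $d\in\bN$. First, $\Erg_d(\underline\cantor_k)$ belongs to $\fI_d$: it is a countable union of measurable sets, and it is invariant under $\cF^{d}$ and $\cF^{-d}$ by construction, using the non-singularity of $\cF$. Its measure is positive because $\mu(\underline\cantor_k)\ge\mu(\cantor_k)>0$.

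For the atom property, take $E\in\fI_d$ with $E\subseteq \Erg_d(\underline\cantor_k)$ and $\mu(E)>0$, and consider $E_k:=E\cap\underline\cantor_k$. We first show $\mu(E_k)>0$. Since $\mu(E)>0$ and $E$ is covered by the sets $\cF^{dj}(\underline\cantor_k)$, some $E\cap \cF^{dj}(\underline\cantor_k)$ has positive measure. Then $\cF^{-dj}$ maps this set into $\underline\cantor_k$, and by $\cF^{d}$-invariance of $E$ mod $0$ its image lies in $E$ mod $0$. Non-singularity then gives $\mu(E_k)>0$.

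Next, $E_k$ is $\cF_{d,k}$-invariant mod $0$: for a.e.\ $x\in E_k$ the point $\cF_{d,k}(x)=\cF^{dn(x)}(x)$ lies in $E$ because $E$ is $\cF^d$-invariant, and it lies in $\underline\cantor_k$ by definition. The same argument applies to $\cF_{d,k}^{-1}$. Lemma \ref{lem:hopf-on-cantor} then gives $E_k=\underline\cantor_k$ mod $0$.

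Now use the invariance of $E$ once more: $E\supseteq \cF^{dj}(E_k)=\cF^{dj}(\underline\cantor_k)$ mod $0$ for all $j\in\mathbb Z$. Hence $E\supseteq\Erg_d(\underline\cantor_k)$ mod $0$, so $\mu(\Erg_d(\underline\cantor_k)\setminus E)=0$. No intermediate-measure subset exists in the sense needed, and this proves that $\Erg_d(\underline\cantor_k)$ is an atom. One subtlety is that the definition of atom is phrased with $0<\mu(E)<\mu(P)$, which is delicate if $\mu(\Erg_d)=\infty$. I would interpret the conclusion as $\mu(P\setminus E)=0$, which is the intended meaning and the one the argument gives.

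For atomicity of $\fI_d$: by (H4), $\cM=\bigcup_k\cantor_k\subseteq\bigcup_k \Erg_d(\underline\cantor_k)$ mod $0$. Two atoms either coincide mod $0$ or are disjoint mod $0$, since their intersection lies in $\fI_d$. So, after discarding duplicates, the sets $\Erg_d(\underline\cantor_k)$ form a countable partition of $\cM$ mod $0$ into atoms. Every $A\in\fI_d$ is then, mod $0$, the union of those atoms that it meets in positive measure, so $\fI_d$ is generated by these atoms. The main (mild) obstacle is bookkeeping the mod-$0$ invariance statements under the return map, which requires the non-singularity of $\cF$ and the almost-sure finiteness of $n(x)$ and $m(x)$ given by (H7).
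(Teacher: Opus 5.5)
Your proof is correct and follows the same route as the paper: restrict an $\cF^d$-invariant subset of the hull to $\underline\cantor_k$, apply the ergodicity of the induced map $\cF_{d,k}$ from Lemma \ref{lem:hopf-on-cantor}, propagate the conclusion back over the whole hull, and then cover $\cM$ using (H4). Your additional steps fill in points the paper leaves implicit: the reduction to $d\in\bN$ (needed because $\cF_{d,k}$ is only defined there), the check that $\mu(E\cap\underline\cantor_k)>0$, and reading ``atom'' as $\mu(P\setminus E)=0$, which guarantees distinct hulls are equal or disjoint mod $0$ even when $\mu(\Erg_d(\underline\cantor_k))=\infty$.
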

\proof
Fix any \(d \in \mathbb Z \setminus \{0\}\). As we already observed, by definition of ergodic hull \(\Erg_d(\underline \cantor_k) \in \fI_d\). Let \(A \in \fI_{d}\), \(A \subseteq \Erg_d(\underline \cantor_k)\). Recalling \eqref{eq:return-maps} for the first-return maps \(\cF_{d,k}\), by invariance of \(A\),
\[
  \cF_{d,k}^{-1}(A \cap \underline \cantor_k) = A \cap \underline \cantor_k \quad \text{mod} \text{ }0.
\]
Hence, by Lemma \ref{lem:hopf-on-cantor}, \(A\) equals \(\Erg_d(\underline \cantor_k)\) or \(\emptyset\) mod \(0\). Therefore any positive-measure subset \(A \in \fI_d\) of \(\Erg_d(\underline \cantor_k)\) coincides mod 0 with \(\Erg_d(\underline \cantor_k)\), proving the first part of the statement. As for the second part, by (H4),
 \[
 \cM \subseteq \bigcup_{k=1}^{\infty}\cantor_k \subseteq \bigcup_{k=1}^{\infty} \Erg_d( \underline \cantor_k) \quad \text{mod }0.
 \]
In other words, the phase space is covered by atoms, implying the statement.
\qed
In the next lemma we argue that the ergodic components belong to the tail.
\begin{lemma}\label{lem:K-and-ergodicity}
\(\fI_{1} \subseteq \fT\).    
\end{lemma}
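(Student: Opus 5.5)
The plan is to show first that every invariant set belongs to $\fS$ itself, and then to use invariance to push it into every $\cF^{-k}\fS$. The first step rests on a standard Hopf-type argument: invariant sets are, mod $0$, unions of stable manifolds.

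\emph{Step 1: invariant sets are stably saturated.} Let $A \in \fI_1$ and take $f = 1_A$. By Lemma \ref{lem:ergodic-components}, $\fI_1$ is atomic and each atom has the form $\Erg_1(\underline\cantor_k)$. Since countable unions of sets in $\fS$ stay in $\fS$, it suffices to show that each atom $E=\Erg_1(\underline\cantor_k)$ lies in $\fS$; then $A$, being mod $0$ a union of atoms, lies in $\fS$ as well. We have $E=\bigcup_{j\in\mathbb Z}\cF^j(\underline\cantor_k)$, and $\underline\cantor_k$ is a union of LSMs by construction. The remaining question is whether each $\cF^j(\underline\cantor_k)$ is, mod $0$, a union of elements of $\xi^s$.

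Rather than check this directly, I would argue through the complement, which avoids images of stable manifolds. Take $x\in E$ and $y\in W^s(x)$. By the first line of \eqref{eq:non-unif-hyp}, $\cF^n(W^s(x))\subseteq W^s(\cF^n x)$, so $|\cF^n x-\cF^n y|\to0$. For $f$ uniformly continuous, the forward Birkhoff-type averages of $f$ therefore agree at $x$ and $y$. The indicator $1_E$ is not continuous, however, so I would not use averages.

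Instead I would use atomicity together with absolute continuity. Let $E'$ be the stable saturation of $E$, namely $E'=\bigcup_{x\in E}W^s(x)$. Since $E$ is forward invariant and stable manifolds are mapped into stable manifolds, $E'$ is forward invariant mod $0$. Because $\cF$ is invertible and measure class preserving, forward invariance gives $\cF^{-1}E'\supseteq E'$. Recurrence (H7) then forces $\cF^{-1}E'=E'$ mod $0$, via the standard fact that in a conservative system $\cF^{-1}E'\setminus E'$ is wandering. So $E'\in\fI_1$ and $E'\supseteq E$. By atomicity, $E'$ is a union of atoms. Any extra atom $E''$ contained in $E'$ would contain a positive-measure set of points lying on stable manifolds of points of $E$.

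To rule this out, I would apply the same saturation argument to the complement $E^c$, which is also invariant and hence a union of atoms. The stable saturation of $E^c$ is likewise invariant. If both saturations strictly overlap, some positive-measure collection of stable manifolds meets both $E$ and $E^c$ in a set of positive $\bold m_{W^s}$ measure. I would then use (H3) and \eqref{eq:meaning-H6} on a Cantor rectangle $\cantor_m$ that meets this collection. There, the Hopf argument of Lemma \ref{lem:hopf-on-cantor} applied to $f^\pm$ for continuous $f$ shows that $\Erg_1(\underline\cantor_m)$ is a single atom. Hence $\underline\cantor_m$ lies entirely in $E$ or entirely in $E^c$, which contradicts the assumption that its stable manifolds split between them. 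This gives $E=E'$ mod $0$, so $E\in\fS$.

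\emph{Step 2: passing to the tail.} For $A\in\fI_1$ we have $\cF^{k}A=A$ mod $0$ for all $k$. Step 1 gives $A\in\fS$, so $A=\cF^{-k}(\cF^kA)=\cF^{-k}A$ mod $0$. Since $A\in\fI_1$ also gives $\cF^{k}A\in\fS$ by Step 1, we get $A\in\cF^{-k}\fS$ for every $k$, using completeness. Hence $A\in\fT$.

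The main obstacle is Step 1: showing that ergodic components are unions of whole LSMs, rather than merely that each LSM lies essentially in one component. I would handle it exactly as above, by showing that each enlarged rectangle $\underline\cantor_m$, which is stably saturated, lies mod $0$ in a single atom. Then $E=\bigcup_m\{\underline\cantor_m : \underline\cantor_m\subseteq E\}$ mod $0$ by (H4) and Lemma \ref{lem:ergodic-components}, and a union of stably saturated sets is in $\fS$.
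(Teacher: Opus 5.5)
Your proof is correct in its essential line, but it takes a different route from the paper, and one paragraph of it should be dropped.

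You factor the statement as $\fI_1\subseteq\fS$ together with the general fact $\fI_1\cap\fS\subseteq\fT$. The latter holds for any invertible non-singular map: if $A\in\fS$ and $\cF^{-1}(A)=A$ mod $0$, then $A=\cF^{-k}(A)$ mod $0$ belongs to the complete $\sigma$-algebra $\cF^{-k}\fS$. The inclusion $\fI_1\subseteq\fS$ is exactly what your final paragraph proves, and that argument is complete on its own. Each $\underline\cantor_m$ is contained in $\Erg_1(\underline\cantor_m)$, which is an atom of $\fI_1$ by Lemma \ref{lem:ergodic-components}. So, relative to a given atom $E$, it is either contained in $E$ or disjoint from it mod $0$. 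Since the $\underline\cantor_m$ cover $\cM$ mod $0$ by (H4), $E$ is mod $0$ the union of the $\underline\cantor_m$ it contains, hence a union of LSMs.

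The paper instead answers directly the question you set aside, namely which iterates $\cF^{j}(\underline\cantor_k)$ are stably saturated. For $j\ge n$, $\cF^{-j}(\underline\cantor_k)\in\cF^{-j}\fS\subseteq\cF^{-n}\fS$ by Lemma \ref{lem:first-prop-K}. Recurrence then gives $\Erg_1(\underline\cantor_k)=\bigcup_{j\ge n}\cF^{-j}(\underline\cantor_k)$ mod $0$. So the forward images are never needed, and membership in every $\cF^{-n}\fS$ comes out in one step.

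The paper's version uses (H7) explicitly, which is precisely where the inclusion can fail without recurrence (Remark \ref{rm:k-mixing-implies-ergodicity}). Yours uses recurrence only implicitly, through Lemma \ref{lem:ergodic-components}, and does not need Lemma \ref{lem:first-prop-K}. It also makes visible that every ergodic component is, mod $0$, a union of enlarged Cantor rectangles, so it is saturated by both LSMs and LUMs.

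The middle argument through the stable saturation $E'$ is unnecessary and, as written, not sound. Stable saturation is not well defined on mod $0$ classes: a single unstable curve is a null set, yet by \eqref{eq:meaning-H6} its stable saturation can have positive measure. So $\mu(E'\setminus E)>0$ does not yield a positive-measure family of stable manifolds meeting $E$ in a set of positive length. Your contradiction on $\cantor_m$ rests on that unproved claim, and the measurability of $E'$ is also left open. The remarks on Birkhoff averages likewise play no role. Remove them, keep Step~2 and your last paragraph, and you have a complete proof.
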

\proof
By Lemma \ref{lem:ergodic-components}, it is sufficient to show that \(\Erg_1(\underline \cantor_k) \in \fT\), for any \(k \in \bN\). Since \(\underline \cantor_k\) is a union of LSMs, recalling \eqref{eq:stable-sigma}, \(\underline \cantor_k \in \fS\). By recurrence (H7), for any \(n \in \bN\), \(\Erg_1(\underline \cantor_k) = \bigcup_{j=-\infty}^{\infty}\cF^{-j}(\underline \cantor_k) = \bigcup_{j=n}^{\infty}\cF^{-j}(\underline \cantor_k)\) mod \(0\). Therefore, by Lemma \ref{lem:first-prop-K}, \(\Erg_1(\underline \cantor_k)\in \cF^{-n}\fS\) for each \(n \in \bN\) and consequently \(\Erg_1(\underline \cantor_k) \in \fT\).
\qed

\begin{remark}\label{rm:k-mixing-implies-ergodicity}
    It is noteworthy that the above inclusion is not true in general for non-recurrent dynamical systems (see \cite[Corollary in Section 2]{MR181737}). In particular, a system can be K-mixing but completely dissipative, hence not ergodic (see \cite[Section 1.1]{MR1450400}).
\end{remark}
In the following, when we say that \(x\) is Lebesgue density point for \(A \in \fA\) w.r.t.\ $\bold m_{W^{u}(x)}$ we mean that
\[
\lim_{r \to 0^+}\frac{\bold m_{W^u(x)} (A \cap B(x,r))}{\bold m_{W^u(x)}(B(x,r))} = 1,
\]
where \(B(x,r)\) is a ball of radius \(r\) centered at \(x\). We have the following elementary observation.
\begin{lemma}
\label{lem:density}
For any $A \in \fA$, \(\mu\)-a.e.\ point $x \in A$ is a Lebesgue density point of \(A\) w.r.t.\ $\bold m_{W^{u}(x)}$.
\end{lemma}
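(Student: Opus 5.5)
The plan is to combine the one-dimensional Lebesgue density theorem on each unstable manifold with the disintegration \eqref{eq:H3-meaning} of (H3). Fix $A\in\fA$ and let
\[
N = \{x \in A \,:\, x \text{ is not a Lebesgue density point of } A \text{ w.r.t.\ } \bold m_{W^u(x)}\}.
\]
We must show $\mu(N)=0$. By (H2), the points with $W^u(x)=\emptyset$ form a null set, so we may discard them. For every remaining $x$, $W^u(x)$ is the unique element $W\in\xi^u$ containing $x$. Consequently $N\cap W = \{x\in A\cap W : x \text{ is not a density point of } A\cap W \text{ w.r.t.\ } \bold m_W\}$, and membership of $x$ in $N$ depends only on $A\cap W^u(x)$.

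Step 1 is the fibrewise statement. Each $W\in\xi^u$ is a $\cC^1$ curve of finite length, since the lengths are uniformly bounded by (H2). Hence $W$ is $\cC^1$-diffeomorphic, via arclength parametrization, to an interval of $\bR$. For a.e.\ $W$ the set $A\cap W$ is $\bold m_W$-measurable, because it is measurable for the completed conditional measure $\nu_W$, which by \eqref{eq:equivalent-condiditonal-measure} is equivalent to $\widehat{\bold m}_W$. The classical Lebesgue density theorem on the line therefore gives $\bold m_W(N\cap W)=0$.

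The only point needing care is that the balls $B(x,r)$ are ambient Riemannian balls rather than arclength intervals. For a $\cC^1$ curve with nonvanishing tangent, $B(x,r)\cap W$ is, for small $r$, an arc around $x$ whose two arclength half-lengths are both $r(1+o(1))$. This uses that the chord and arc lengths are comparable near $x$. The curve may return to $B(x,r)$ far away along its length, but injectivity and compactness of subarcs away from $x$ exclude this for small $r$. Density with respect to these nearly symmetric arcs then follows from the one-dimensional theorem, which holds for any family of intervals shrinking regularly to $x$. One could also restrict to the open part of $W$ and reparametrize.

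Step 2 is integration. We need $N$ to be $\mu$-measurable, or at least contained in a measurable $\mu$-null set. I would use (H3) together with the standard fact that the density ratio, computed along elements of a measurable partition, is a measurable function of $x$: take the limit over rational $r$ and approximate $\bold m_W$ by $\nu_W$ up to the density bounded by $C$. Then \eqref{eq:H3-meaning} gives
\[
\mu(N)=\int \nu_W(N\cap W)\,d\mu_{\xi^u}(W) \le C\int \widehat{\bold m}_W(N\cap W)\,d\mu_{\xi^u}(W) = 0 .
\]

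I expect the main obstacle to be this measurability point, not the density argument itself. If it becomes delicate, I would sidestep it. Each fibre section of $N$ is contained in a fibre-null set, so $\mu^*(N)=0$ by the outer-measure version of the disintegration. Since $\fA$ is complete, $N$ is then measurable and null.
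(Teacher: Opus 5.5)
Your proposal follows the paper's proof. Both apply the one-dimensional density theorem on each $W\in\xi^u$, then use the disintegration \eqref{eq:H3-meaning} with the bound $d\nu_W/d\widehat{\mathbf m}_W\le C$ to get $\mu(N)=0$. Your treatment of ambient balls versus arclength is extra care that the paper omits.

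You are right that measurability is the delicate point, but discard the fallback: the ``outer-measure version of the disintegration'' is false. In ZFC there is a function $f:[0,1]\to[0,1]$ whose graph meets every closed subset of $[0,1]^2$ of positive Lebesgue measure. That graph has full outer measure, although each vertical section is a single point. So null fibre sections tell you nothing until the set is known to lie inside a measurable set with null sections. (The paper deduces measurability of $B$ from measurability of each $W^u\cap B$. That is the same inference, and it is not valid as stated.)

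Your main route is the correct repair, provided you use the comparison with $\nu_W$ as an exact equivalence rather than an approximation. Since $C^{-1}\widehat{\mathbf m}_W\le\nu_W\le C\,\widehat{\mathbf m}_W$, the ratios $\mathbf m_W(B(x,r)\cap W\setminus A)/\mathbf m_W(B(x,r)\cap W)$ and $\nu_W(B(x,r)\setminus A)/\nu_W(B(x,r))$ agree up to a factor $C^2$. Hence $x$ is an $\mathbf m_W$-density point of $A$ if and only if it is a $\nu_W$-density point. It follows that, up to a $\mu$-null set, $N=\{x\in A:\limsup_{r\to0}\nu_{W^u(x)}(B(x,r)\setminus A)/\nu_{W^u(x)}(B(x,r))>0\}$.

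To see that this set is measurable, write $G_x=\{y:(x,y)\in G\}$. A monotone-class argument, starting from the measurability of $x\mapsto\nu_{W^u(x)}(E)$ for $E\in\fA$, shows that $x\mapsto\nu_{W^u(x)}(G_x)$ is measurable for every $G\in\fA\otimes\fA$. Apply this to $G=\{(x,y):y\notin A,\ |x-y|<r\}$ and to $\{(x,y):|x-y|<r\}$. Left-continuity in $r$ of these quantities, since the balls are open, lets you restrict to rational $r$. Thus $N$ is measurable, and your displayed estimate completes the proof.
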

\proof
Call 
\[
\begin{split}
B = \left\{x \in A \,:\, x \text{ is not a density point of } W^{u}(x)\cap A \text{ w.r.t.\ } \bold m_{W^{u}(x)} \right\}.
\end{split}
\]
Every time \(W^u \cap A\) is measurable w.r.t.\ the Borel \(\sigma\)-algebra on \(W^u \in \xi^u\) (this occurs $\mu_{\xi^u}$-almost everywhere), we have that \( W^u\cap B\) is measurable. Therefore, by (H3), \(B\) is measurable and, recalling \eqref{eq:equivalent-condiditonal-measure}, for some \(C>0\),
\[
\mu(B) = \int_{\cM_{\xi^u}} \nu_{W^u}(B \cap W^u) \, d\mu_{\xi^u}(W^u) \le C \int_{\cM_{\xi^u}} \widehat{\bold m}_{W^u}(B \cap W^u) \, d\mu_{\xi^u}(W^u).
\]
On the other hand, by the definition of \(B\), we have \(\widehat{\bold m}_{W^u}(B \cap W^u) = 0\). Therefore, by the equation above, \(\mu(B) = 0\), which is the sought claim.
\qed
We now need a distortion estimate, which we prove in Lemma \ref{lem:distortion}. We start with the following easy consequence of assumption (H5). Let \(D > 1\) be the constant from \eqref{eq:dis1}. 

\begin{lemma}\label{lem:distortion-bound}
For any \(A \in \fA\) , \(x \in \cM\), \(n \in \bN\) and connected curve \(W_n \subseteq W^u(\cF^n(x))\),
\[
 \begin{split}
 D^{-1} \le \frac{\widehat{\bold m}_{W_n}\bigl(\cF^n(A) \cap W_n\bigr)}{\widehat{\bold m}_{\cF^{-n}(W_n)}(A \cap \cF^{-n}(W_n))}\le D.
 \end{split}
\]
\end{lemma}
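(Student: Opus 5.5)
The plan is a change of variables along the unstable curve, with (H5) supplying the distortion control. Set $V = \cF^{-n}(W_n)$. By the first line of \eqref{eq:non-unif-hyp}, $V \subseteq W^u(x)$, so $V$ is a connected piece of a LUM. Moreover $\cF^n$ restricted to $V$ is a $\cC^1$ diffeomorphism onto $W_n$, since LUMs avoid the singularity sets $\cS_{-\infty}$. Writing $g = \cJ_{V}\cF^{n}$ for the expansion coefficient of $\cF^n$ along $V$, the one-dimensional change of variables formula gives, for any measurable $E \subseteq V$,
\[
\bold m_{W_n}(\cF^n(E)) = \int_E g \, d\bold m_V .
\]
I will apply this to $E = A \cap V$, noting that $\cF^n(A\cap V) = \cF^n(A) \cap W_n$ because $\cF$ is injective.

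Next, I will convert (H5) into a bound on $g$. By the chain rule along the curve, $\cJ_{W_n}\cF^{-n}(\cF^n(y)) = 1/g(y)$. Hence \eqref{eq:dis1}, applied to points of $W_n \subseteq W^u(\cF^n x)$, yields
\[
D^{-1} \le \frac{g(y)}{g(z)} \le D \qquad \text{for all } y, z \in V .
\]
One technical point: (H5) is stated for a.e.\ $W\in\xi^u$. I will assume either that $W^u(\cF^n x)$ is one of the good curves, or that the bound holds on every LUM, as is implicit in how the lemma is used. This is the only delicate point. If needed, I would state the lemma for a.e.\ $x$ and use the invariance of $\xi^u$ under $\cF^{-1}$ from \eqref{eq:non-unif-hyp}.

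Finally, I will compute the ratio. Normalizing,
\[
\widehat{\bold m}_{W_n}(\cF^n(A)\cap W_n) = \frac{\int_{A\cap V} g\, d\bold m_V}{\int_V g\, d\bold m_V}.
\]
By the mean value property, the numerator equals $g(y_1)\,\bold m_V(A\cap V)$ up to the bounds $\inf g \le \cdot \le \sup g$, and similarly the denominator equals $g(y_2)\,\bold m_V(V)$. More cleanly: the numerator lies between $\inf_V g \cdot \bold m_V(A\cap V)$ and $\sup_V g \cdot \bold m_V(A\cap V)$, while the denominator lies between $\inf_V g\cdot|V|$ and $\sup_V g\cdot|V|$. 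The ratio is therefore within a factor $\sup g/\inf g \le D$ of $\widehat{\bold m}_V(A\cap V)$, in both directions. This is exactly the claim; the case $\bold m_V(A\cap V)=0$ is trivial, with both sides vanishing.

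I do not expect any real obstacle. The main care is needed in justifying the smoothness of $\cF^n$ on $V$ and the applicability of (H5) to the specific curve $W^u(\cF^n x)$.
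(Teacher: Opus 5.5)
Your proposal is correct and is essentially the paper's argument: a change of variables along the unstable curve combined with the distortion bound (H5). The only cosmetic difference is that you integrate the forward Jacobian $\cJ_V\cF^n$ over $V=\cF^{-n}(W_n)$ and sandwich it between its infimum and supremum, whereas the paper integrates $\cJ_{W_n}\cF^{-n}$ over $W_n$ and uses continuity and connectedness to pick a point where it equals its average $|\cF^{-n}(W_n)|/|W_n|$.
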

\proof
By the change of variable formula, we have
\begin{equation}\label{eq:change-variable}
  \bold m_{\cF^{-n}(W_n)}(A \cap \cF^{-n}(W_n))=\int_{W_n \cap \cF^n(A)} \mathcal{J}_{W_n} \cF^{-n}(x) \, d\bold m_{W_n}(x).
\end{equation}
Setting \(A = \cF^{-n}(W_n)\) in \eqref{eq:change-variable}, we obtain
\[
     \bold m_{\cF^{-n}(W_n)} (\cF^{-n}(W_n))= \int_{W_n} \mathcal{J}_{W_n} \cF^{-n}(x) \, d \bold m_{W_n}(x).
\]
Since \(\cJ_{W_n} \cF^{-n}: W_n \to \bR^+\) is a continuous function and \(W_n\) is connected by hypothesis, there exists \(x_0 \in W_n\) such that
\[
    \cJ_{W_n} \cF^{-n}(x_0) =  \frac{ \bold m_{\cF^{-n}(W_n)} (\cF^{-n}(W_n))}{\bold m_{W_n}(W_n)}.
\]
By the last equation and \eqref{eq:dis1}, one has that, for some \(D>1\), on \(W_n\),
\[
D^{-1}\frac{ \bold m_{\cF^{-n}(W_n)} (\cF^{-n}(W_n))}{\bold m_{W_n}(W_n)} \le \cJ_{W_n} \cF^{-n} \le D \frac{ \bold m_{\cF^{-n}(W_n)} (\cF^{-n}(W_n))}{\bold m_{W_n}(W_n)}.
\]
Combining the above equation with \eqref{eq:change-variable}, we obtain the statement.
\qed
The next lemma is the main distortion estimate.
\begin{lemma}
\label{lem:distortion}
For any \(A\in \fA\), a.e.\ \(x_0 \in A\) and any \(\ve>0\), there exists $\bar{n}\in \mathbb{N}$ such that, for all $n \ge \bar{n}$, 
\[
\widehat{\bold m}_{W^u(\cF^n(x_0))} \bigl(W^u(\cF^n(x_0)) \setminus \cF^n(A)\bigr) \le \ve.
\]
\end{lemma}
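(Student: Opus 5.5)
The plan is to combine the density-point statement of Lemma \ref{lem:density} with the distortion bound of Lemma \ref{lem:distortion-bound}, pulling back the whole unstable manifold \(W^u(\cF^n(x_0))\) to a tiny neighbourhood of \(x_0\) inside \(W^u(x_0)\). First, by Lemma \ref{lem:density}, we restrict to the full-measure set of \(x_0\in A\) that are Lebesgue density points of \(A\cap W^u(x_0)\) w.r.t.\ \(\bold m_{W^u(x_0)}\). We also discard the null set where (H2), (H5) fail along the orbit; for instance, we require that \(W^u(\cF^n(x_0))\) is a genuine element of \(\xi^u\) for every \(n\), which holds for a.e.\ \(x_0\) since \(\cF\) is non-singular.

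Next we set \(W_n = W^u(\cF^n(x_0))\). This is a connected \(\cC^1\) curve (an open curve, hence connected), so Lemma \ref{lem:distortion-bound} applies with \(W_n\) and with the set \(\cM\setminus A\) in place of \(A\). Since \(\cF^n(\cM\setminus A)\cap W_n = W_n\setminus \cF^n(A)\) mod \(0\) (by invertibility of \(\cF\) off singularities), we get
\[
\widehat{\bold m}_{W_n}\bigl(W_n\setminus\cF^n(A)\bigr)\le D\,\widehat{\bold m}_{\cF^{-n}(W_n)}\bigl(\cF^{-n}(W_n)\setminus A\bigr).
\]
By the first line of \eqref{eq:non-unif-hyp}, \(V_n:=\cF^{-n}(W_n)\subseteq W^u(x_0)\) is a connected subcurve containing \(x_0\). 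By the second line, \(|V_n|\to 0\). It therefore suffices to show that \(\widehat{\bold m}_{V_n}(V_n\setminus A)\to 0\).

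The main obstacle is this last step. The density-point property is stated for balls \(B(x_0,r)\) centred at \(x_0\), while \(V_n\) is an interval containing \(x_0\) but possibly very asymmetric around it. To handle this, I would note that \(V_n\subseteq B(x_0,|V_n|)\cap W^u(x_0)\), so that
\[
\bold m(V_n\setminus A)\le \bold m\bigl(B(x_0,|V_n|)\cap W^u(x_0)\setminus A\bigr)=o(|V_n|),
\]
and dividing by \(\bold m(V_n)=|V_n|\) gives the claim. This uses that \(\bold m_{W^u(x_0)}(B(x_0,r))\) is comparable to \(r\) for small \(r\), which holds because \(W^u(x_0)\) is a \(\cC^1\) curve (locally almost straight). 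One also needs \(x_0\) to lie in the interior of \(W^u(x_0)\) at distance \(\ge r\) from its endpoints, which is automatic for small \(r\) since the curve is open. So uncentred intervals shrinking to \(x_0\) still capture density, at the cost of a factor of \(2\). Choosing \(\bar n\) so that \(\widehat{\bold m}_{V_n}(V_n\setminus A)\le \ve/D\) for all \(n\ge\bar n\) concludes the argument.
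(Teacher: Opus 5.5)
Your proposal is correct and follows essentially the same route as the paper. Both arguments take a density point of $A$ on $W^u(x_0)$ (Lemma \ref{lem:density}), use \eqref{eq:non-unif-hyp} to see that $\cF^{-n}(W^u(\cF^n(x_0)))$ shrinks onto $x_0$, and transfer to $W^u(\cF^n(x_0))$ via Lemma \ref{lem:distortion-bound} with the threshold $\ve/D$. Your extra step comparing the off-centre curve $\cF^{-n}(W_n)$ with the ball $B(x_0,|\cF^{-n}(W_n)|)$ makes explicit a point the paper passes over with the phrase ``since $\cF^{-n}(W_n)$ is a curve containing $x_0$''.
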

\proof
By Lemma \ref{lem:density}, a.e.\ \(x_0 \in A\) is a density point w.r.t.\ \(\bold m_{W^u(x_0)}\). Since this is a typical conditions, we can assume that \(W^u(x_0)\) is non empty. For notational convenience, set \(W_n = W^u(\cF^n(x_0))\). By \eqref{eq:non-unif-hyp},
\[
    \lim_{n \to \infty} |\cF^{-n}(W_n)| = 0. 
\]
Fix \(\ve >0\). By the equation above, the fact that \(x_0\) is a density point of \(A\) for the measure \(\bold m_{W^{u}(x_{0})}\) and since \(\cF^{-n}(W_n)\) is a curve containing \(x_0\), we have that 
\begin{equation*}
\lim_{n \rightarrow \infty} \frac{\bold m_{W^{u}(x_{0})}(A^c \cap \cF^{-n}(W_n))}{\bold m_{W^{u}(x_{0})}(\cF^{-n}(W_n))} = 0.
\end{equation*}
Let \(D>1\) be the constant appearing in the statement of Lemma \ref{lem:distortion-bound}. By the equation above, there exists $\bar{n} \in \bN$ so large that, for all $n \ge \bar{n}$,
\begin{equation}\label{eq:density-point}
\begin{split}
\widehat{\bold m}_{\cF^{-n}(W_n)}(A^c \cap \cF^{-n}(W_n))= \frac{\bold m_{W^{u}(x_{0})}(A^c \cap \cF^{-n}(W_n))}{\bold m_{W^{u}(x_{0})}(\cF^{-n}(W_n))}\le \frac{\ve}{ D}.
\end{split}
\end{equation}
Hence, using that the map is invertible and Lemma \ref{lem:distortion-bound}, we obtain, for all $n \ge \bar{n}$,
\[
   \widehat{\bold m}_{W_n}\bigl(W_n \setminus \cF^n(A) \bigr) =  \widehat{\bold m}_{W_n}\bigl(\cF^n(A^c) \cap W_n \bigr) \le D \widehat{\bold m}_{\cF^{-n}(W_n)}(A^c \cap \cF^{-n}(W_n)) \le \ve.
\]
This concludes the proof.
\qed

The following lemma highlights a crucial property of the sets in the tail $\sigma$-algebra.
\begin{lemma}\label{lem:global-manifolds}
    For any \(A \in \fT\) and \(n \in \mathbb Z\), there exists a set \(A^n\) which is a union of LSMs such that
    \[
    \cF^{n}(A) = A^n \quad \text{mod }0.
    \]
Moreover, for any sequence of sets \(A^n\) as above and a.e.\ \(x \in A\) we have that \(W^s(\cF^n(x)) \subseteq A^n\) for all \(n \in \mathbb Z\).
\end{lemma}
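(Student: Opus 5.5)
The first assertion will follow from Lemma \ref{lem:tail-inv} together with the definition of \(\fS\). Since \(\fT = \cF\fT = \cF^{-1}\fT\), iterating gives \(\cF^n(A) \in \fT\) for every \(n \in \mathbb Z\). Taking \(k=0\) in the intersection defining \(\fT\) shows \(\fT \subseteq \fS\), so \(\cF^n(A) \in \fS\). By \eqref{eq:stable-sigma} this means there is \(A^n \in \fA\), a union of elements of \(\xi^s\), with \(\cF^n(A) = A^n\) mod \(0\). I will also note that \(\cF\) and \(\cF^{-1}\) are non-singular, being diffeomorphisms off the null sets \(\cS_{\pm1}\) with \(\mu\) equivalent to area. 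This guarantees that images of null sets are null, so the mod-\(0\) manipulations are legitimate.

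For the second assertion, fix any family \(\{A^n\}_{n\in\mathbb Z}\) of unions of LSMs with \(\cF^n(A) = A^n\) mod \(0\). For each \(n\), set \(N_n = \cF^n(A) \setminus A^n\), which is a null set. Let \(G\) be the full-measure set of points possessing a LSM, given by (H2). For \(y \in \cF^n(A) \setminus N_n\) we have \(y \in A^n\). Since \(A^n\) is a union of elements of the partition \(\xi^s\) and \(y\) lies in exactly one element, namely \(W^s(y)\), it follows that \(W^s(y) \subseteq A^n\). The only caveat is that \(y\) must lie in an element of \(\xi^s\) at all, so I also exclude \(y \notin G\); if \(W^s(y)=\emptyset\) the inclusion holds trivially anyway.

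Pulling back, define the exceptional set
\[
E = \bigcup_{n \in \mathbb Z} \cF^{-n}\bigl(N_n \cup (\cM\setminus G)\bigr) \cap A.
\]
This is a countable union of null sets, by non-singularity of \(\cF^{\pm n}\), so \(\mu(E)=0\). For \(x \in A \setminus E\) and any \(n\), the point \(\cF^n(x)\) lies in \(\cF^n(A)\setminus N_n\), so \(W^s(\cF^n(x)) \subseteq A^n\), which is the claim.

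The main subtle point is making sure the partition structure is used correctly. The statement that a union of partition elements containing \(y\) contains the whole element \(W^s(y)\) relies on \(\xi^s\) being a genuine partition, mod \(0\), into the sets \(W^s(\cdot)\) as defined in (H2). I will handle this by discarding, for each \(n\), the null set where \(\xi^s\) fails to be a partition. This is also a countable union over \(n\) of null sets, so it can be absorbed into \(E\). A second mild point is measurability of \(A^n\) and of \(E\), which holds by (H3) and completeness of \(\fA\).
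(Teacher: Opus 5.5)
Your proof is correct and follows essentially the same route as the paper: the first part comes from $\cF^n(A)\in\fS$ (you get this via Lemma~\ref{lem:tail-inv} and $\fT\subseteq\fS$, the paper via $A\in\cF^{-n}\fS$ for all $n$), and the second part from the fact that a union of elements of $\xi^s$ containing $y$ contains $W^s(y)$, with the exceptional null sets controlled by non-singularity and a countable union over $n$. The only differences are that you build the null exceptional set directly where the paper argues by contradiction, and that your extra precautions about points without a LSM are harmless but unnecessary, since $y\in A^n$ already places $y$ in an element of $\xi^s$, which is $W^s(y)$ by definition.
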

\proof
Take $n \in \Z$.
For any \(B \in \cF^{-n}\fS\), there exists \(B_0 \in \fS\) such that \(B = \cF^{-n}(B_0)\). Therefore, recalling \eqref{eq:stable-sigma}, there exists \(\Xi\subseteq \xi^s\) such that \(\cF^{n}(B) = B_0 = \bigcup_{W \in \Xi} W\) mod \(0\). Hence, the first part of the statement follows from the fact that any \(A \in \fT\) belongs to \(\cF^{-n}\fS\) for all \(n\in \mathbb Z\). For the second part, if the assertion weren't true, there would exist \(\bar n \in \mathbb Z\) and a set of positive measures \(N \subseteq A\) such that \(W^s(\cF^{\bar n} (x))\) is not contained in \(A^{\bar n}\) for any \(x \in N\). By the non-singularity of \(\cF\), in this case, there exists a positive-measure set \(N' \subseteq \cF^{\bar n}(A)\) such that \(W^s(x)\) is not a subset of \(A^{\bar n}\) for each \(x \in N'\). Since \(A^{\bar n}\) is a union of LSMs, every \(x \in A^{\bar n}\) satisfies \(W^s(x) \subseteq A^{\bar n}\). It follows that \(N' \subseteq \cF^{\bar n}(A) \setminus A^{\bar n}\), contradicting the first part of the statement. 
\qed
For any \(W\in \xi^u\) and \(B \subseteq W\), we denote by \(\brush_{W}  (B)\) the \textit{brush of local stable manifolds based on \(B\)},
\[
     \brush_{W}(B) = \bigcup_{x \in B }W^s(x).
\]
The following result is a consequence of absolute continuity.
\begin{lemma}\label{lem:lebw-vsmu}
For any \(\cantor_k\) there exists \(C_k>0\) such that, for any \(W \in \xi^u\) and measurable \(B \subseteq W\) and $H \subseteq \brush_{W}(B) \cap \cantor_k$,
\[
   \mu ( H ) \le C_k \widehat{\bold m}_{W}(B \cap W) .
\]
\end{lemma}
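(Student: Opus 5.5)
We plan to estimate $\mu(H)$ by disintegrating $\mu$ along the stable partition $\xi^s$, using (H3) for $\xi^s$, and then transferring measure on stable manifolds to measure on unstable manifolds via the unstable holonomy of (H6). Throughout, the key simplification is that $H \subseteq \cantor_k$, so only stable manifolds through points of $\cantor_k$ matter. On those, the product structure of the Cantor rectangle gives uniform control.

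\textbf{Step 1: reduce to stable leaves and fix a reference unstable leaf.} By monotonicity, it suffices to bound $\mu(\brush_W(B)\cap\cantor_k)$; we may assume $B \subseteq W$ and that the brush meets $\cantor_k$ in positive measure. Fix once and for all a reference point $z\in\cantor_k$ with a good LUM $W_0 = W^u(z)$. By the Cantor rectangle property, every LSM $W^s(y)$ with $y\in \cantor_k$ meets $W_0$ at $[y,z]\in\cantor_k$. Moreover, $|W_0|\ge c(\cantor_k)$.

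\textbf{Step 2: disintegrate $\mu$ along $\xi^s$.} Write
\[
\mu(\brush_W(B)\cap\cantor_k)=\int \nu_{W^s}(\brush_W(B)\cap\cantor_k\cap W^s)\, d\mu_{\xi^s}(W^s).
\]
Only stable leaves $W^s$ meeting both $W$ (at a point of $B$) and $\cantor_k$ contribute, and each such contribution is at most $1$. Let $B_0\subseteq W_0$ be the set of points where these contributing leaves cross $W_0$. We then want to show
\[
\mu_{\xi^s}\{\text{contributing leaves}\}\le C'\,\bold m_{W_0}(B_0),
\]
that is, the factor measure of stable leaves through $\cantor_k$ is comparable to Lebesgue measure of their traces on $W_0$. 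This comparability is itself a consequence of (H3) and (H6). Applied to a brush over $W_0$, absolute continuity (cf.\ \eqref{eq:meaning-H6}) says the quotient measure is equivalent to $\bold m_{W_0}$. For a uniform constant we also use $\mu(\underline\cantor_k)<\infty$ and the uniform bound on the length of stable leaves together with (H3).

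\textbf{Step 3: transport $B_0$ back to $B$.} We have $\bold m_{W_0}(B_0)\le \tilde D\,\bold m_{W}(B)$ by (H6), via the holonomy from $W$ to $W_0$ along stable leaves. Then $\bold m_W(B) = |W|\,\widehat{\bold m}_W(B)\le L\,\widehat{\bold m}_W(B)$, where $L$ is the uniform bound on lengths of LUMs in (H2).

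\textbf{Main obstacle.} The hardest step is Step 2: making the comparison between the quotient measure $\mu_{\xi^s}$ on leaves through $\cantor_k$ and $\bold m_{W_0}$ quantitative with a constant depending only on $k$, since (H6) as stated only gives qualitative equivalence \eqref{eq:meaning-H6}. I would first try to derive this by disintegrating $\mu$ restricted to $\brush_{W_0}(W_0)$ along $\xi^u$. Each unstable leaf $W'$ in that brush maps to $W_0$ by stable holonomy with Jacobian in $[\tilde D^{-1},\tilde D]$, and its conditional measure has density in $[C^{-1},C]$ w.r.t.\ normalized Lebesgue measure. Hence the measure of a sub-brush over $B_0$ is at most
\[
C\tilde D\,\bold m_{W_0}(B_0)\cdot \sup_{W'}|W'|^{-1}\cdot \mu(\underline\cantor_k).
\]
Here $|W'|\ge c(\cantor_k)$ whenever $W'$ passes through $\cantor_k$, and we may restrict to such leaves because $H\subseteq\cantor_k$. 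This yields a constant $C_k$ of the form $C\tilde D^2 L\,\mu(\underline\cantor_k)/c(\cantor_k)$, possibly up to a further factor from the choice of $W_0$.
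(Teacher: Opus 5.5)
Your last paragraph contains a correct proof. Step~2, however, as you set it up, has a real gap. After you bound each contribution $\nu_{W^s}(\brush_W(B)\cap\cantor_k\cap W^s)$ by $1$, the quantity you must control is $\mu_{\xi^s}$ of the contributing leaves. That is $\mu$ of the \emph{whole} brush $\brush_{W_0}(B_0)$, made of entire LSMs that extend beyond $\cantor_k$.

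Nothing you cite controls this quantity. (H3) for $\xi^s$ only controls the conditional densities, not the quotient measure $\mu_{\xi^s}$, and the bound on stable lengths does not help either. If you disintegrate this full brush along $\xi^u$, as in your last paragraph, the unstable leaves $W'$ crossing it include leaves that never meet $\cantor_k$. For those there is no lower bound on $|W'|$, so $\sup_{W'}|W'|^{-1}$ can be infinite. Your remark ``we may restrict to such leaves because $H\subseteq\cantor_k$'' is not available at that point, because you discarded the intersection with $\cantor_k$ when you bounded the contributions by $1$. So the inequality $\mu_{\xi^s}\{\text{contributing leaves}\}\le C'\,\bold m_{W_0}(B_0)$ is stronger than the lemma and is not justified by the tools you invoke.

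The fix is to skip the $\xi^s$-disintegration entirely and run your last computation on $H$ itself. Since $H\subseteq\brush_{W_0}(B_0)\cap\cantor_k$, in $\mu(H)=\int\nu_{W'}(H\cap W')\,d\mu_{\xi^u}(W')$ only leaves with $W'\cap\cantor_k\neq\emptyset$ contribute. These satisfy $|W'|\ge c(\cantor_k)$ and lie in $\underline\cantor_k$, so their total $\mu_{\xi^u}$-mass is at most $\mu(\underline\cantor_k)$. You then obtain $C_k=C\tilde D^2L\,\mu(\underline\cantor_k)/c(\cantor_k)$, with no further factor from the choice of $W_0$.

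This is the paper's proof, except that the paper does without the reference leaf $W_0$. It applies the stable holonomy directly from $W$ to each $\widetilde W$, getting $\bold m_{\widetilde W}(H\cap\widetilde W)\le\tilde D\,\bold m_W(B)$, and hence a constant with one factor of $\tilde D$ fewer.
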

\proof
Applying \eqref{eq:H3-meaning} to \(H = H \cap \brush_{W}(B) \cap \cantor_k\), we write
\[
\mu(H) = \int_{\{\widetilde W \in \cM_{\xi^u} \,:\, \widetilde W \cap \underline \cantor_k \neq \emptyset\}} \nu_{\widetilde W} ( H\cap \brush_{W}(B) \cap \cantor_k \cap \widetilde W) \, d\mu_{\xi^u}(\widetilde W).
\]
By the same hypothesis (H3), the conditional probability measures \(\nu_{\widetilde W}\) are equivalent to \(\widehat{\bold m}_{\widetilde W}\). Therefore, for some \(C>0\),
\[
   \mu(H)\le C \int_{\{\widetilde W \in \cM_{\xi^u} \,:\, \widetilde W \cap \underline \cantor_k \neq \emptyset\}} \widehat{\bold m}_{\widetilde W} ( H\cap \brush_{W}(B) \cap \cantor_k \cap \widetilde W) \, d\mu_{\xi^u}(\widetilde W). 
\]
For any \(\widetilde W \in \xi^u\), we consider the set \(W_{*}\) of points \(x \in W\) such that \(W^s(x) \cap \widetilde W \neq  \emptyset\) and let \(\bold h: W_{*} \to \widetilde W\) be the holonomy map. By (H6),
\[
\begin{split}
      \widehat{\bold m}_{\widetilde W}( H\cap \brush_{W}(B) \cap \cantor_k \cap \widetilde W) &= \frac{\bold m_{\widetilde W}( H\cap \brush_{W}(B) \cap \cantor_k \cap \widetilde W)}{\bold m_{\widetilde W}(\widetilde W)} \\
      &=  \frac{\bold m_{\widetilde W}(H\cap\bold h(B\cap W_{*}) \cap \cantor_k \cap \widetilde W)}{\bold m_{\widetilde W}(\widetilde W)} \\
      &\le  \frac{\bold m_{\widetilde W}(\bold h(B \cap W_{*}) \cap \widetilde W)}{\bold m_{\widetilde W}(\widetilde W)} \le  \frac{\tilde D \bold m_{W}( B \cap W)}{\bold m_{\widetilde W}(\widetilde W)}.
\end{split}
\]
Hence, dividing and multiplying by \(\bold m_W(W)\),
\[
\widehat{\bold m}_{\widetilde W}( H\cap \brush_{W}(B) \cap \cantor_k \cap \widetilde W) \le \tilde D \widehat{\bold m}_{W}(B \cap W) \frac{\bold m_{W}(W)}{\bold m_{\widetilde W}(\widetilde W)}.
\]
By the discussion after (H2), we have that \(\bold m_W(W) = |W|\) is bounded by some constant. Moreover, by the definition of Cantor rectangles before (H4), there exists \(c_k>0\) such that, for every \(\widetilde W \) intersecting \(\cantor_k\), \(|\widetilde W| \ge c_k\). Therefore, for some \(C_k >0\),
\[
\widehat{\bold m}_{\widetilde W}( H\cap \brush_{W}(B) \cap \cantor_k \cap \widetilde W) \le C_k \widehat{\bold m}_{W}(B \cap W),
\]
and, by the estimate above,
\[
\begin{split}
  \mu(H)&\le C_k  \widehat{\bold m}_{W}( B \cap W)\int_{\{\widetilde W \in \cM_{\xi^u} \,:\, \widetilde W \cap \underline \cantor_k \neq \emptyset\}} \, d\mu_{\xi^u}(\widetilde W) = C_k \widehat{\bold m}_{W}( B\cap W) \mu(\underline \cantor_k).
\end{split}
\]
By (H4) again, we have \(\mu(\underline \cantor_k) <\infty\). Hence, we conclude the proof by renaming the constant \(C_k\).
\qed

The following lemma is the most important step of the proof.
\begin{lemma}\label{lem:tail-ergodic-atomic}
    For any \(k \in \bN\) there exists \(d \in \bN\) such that \(\fT \cap \Erg_1(\underline \cantor_k) \subseteq \fI_d\).
\end{lemma}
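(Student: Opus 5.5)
Fix $k$ and write $\cantor=\cantor_k$, $E=\Erg_1(\underline\cantor)$. We choose $d$ as the period of the returns of $\cantor$ to itself. Set
\[
d=\gcd\{n\in\bN:\mu(\cF^n(\cantor)\cap\cantor)>0\};
\]
this set is nonempty by (H7). The goal is to show that every $A\in\fT\cap E$ with $\mu(A)>0$ satisfies $\cF^{d}(A)=A$ mod $0$. We first show that $A$ essentially contains $\cantor$, or more generally a whole ``phase'' of it, and then use the arithmetic of return times together with the atomicity from Lemma \ref{lem:ergodic-components}.

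\emph{Step 1 (filling).} Let $A\in\fT$ with $\mu(A\cap E)>0$. By Lemma \ref{lem:tail-inv} every $\cF^m(A)$ is in $\fT$, and $E=\bigcup_j\cF^j(\underline\cantor)$, so some iterate of $A$ meets $\underline\cantor$ in positive measure. Thanks to the saturation of $\underline\cantor$ we may even assume it meets $\cantor^1$, hence it meets $\brush$'s of LSMs through $\cantor$. Replacing $A$ by that iterate, take a typical $x_0\in A$ which returns to $\cantor$ infinitely often. This is possible by (H7) applied to the return map to $\underline\cantor$, after arranging that $x_0$'s orbit hits $\cantor$; Lemma \ref{lem:hopf-on-cantor} and ergodicity of the return map make a.e.\ point of $\underline\cantor$ visit $\cantor$ infinitely often. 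Let $n_j\to\infty$ be the times with $\cF^{n_j}(x_0)\in\cantor$, and set $W_j=W^u(\cF^{n_j}(x_0))$, which has length at least $c(\cantor)$. By Lemma \ref{lem:distortion}, $\widehat{\bold m}_{W_j}(W_j\setminus\cF^{n_j}(A))\to0$. By Lemma \ref{lem:global-manifolds}, $\cF^{n_j}(A)$ is mod $0$ a union of LSMs, namely $A^{n_j}$, which contains $W^s(y)$ for a.e.\ $y\in\cF^{n_j}(A)$. Every LSM through a point of $\cantor$ meets $W_j$, because $\cantor$ is a Cantor rectangle. Hence $\cantor\setminus A^{n_j}\subseteq\brush_{W_j}(W_j\setminus A^{n_j})\cap\cantor$ mod $0$, and Lemma \ref{lem:lebw-vsmu} gives
\[
\mu(\cantor\setminus\cF^{n_j}(A))\le C_k\,\widehat{\bold m}_{W_j}(W_j\setminus\cF^{n_j}(A))\to0 .
\]
Applying the same argument to $A^c\cap E$, which is also in $\fT$ by Lemma \ref{lem:K-and-ergodicity}, shows that its iterates also almost fill $\cantor$ along its own return times.

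\emph{Step 2 (periodicity).} Define $N(A)=\{n\in\Z:\mu(\cantor\setminus\cF^n(A))=0\}$. We first upgrade ``almost fills'' to ``fills''. Since $\mu(\cantor)<\infty$ and $\mu(\cantor\setminus\cF^{n_j}(A))\to0$, consider, for $m$ a return time of $\cantor$ (that is, $\mu(\cF^m\cantor\cap\cantor)>0$), the quantity $\mu(\cantor\cap\cF^{n}(A))$, whose values along return times should vary in a controlled way. The cleaner route is the following. The sets $\cF^n(A)\cap\cantor$ and $\cF^n(A^c)\cap\cantor$ partition $\cantor$. If both $A$ and $A^c$ almost fill $\cantor$ at times $n$ and $n'$, then $\cF^{n-n'}$ maps a set nearly equal to $\cF^{n'}(A)\supseteq$ almost all of $\cantor$ into $\cF^{n}(A)$. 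We then use Step 1 with the relevant quantities as $\ve\to0$ along subsequences, together with the fact that $\cF^{m}(\cantor)\cap\cantor$ has positive measure, to show that the differences of filling times of $A$ and $A^c$ avoid the return set of $\cantor$. Then $\mu(\cF^{n+m}(A)\cap\cF^{n}(A^c))=0$ is forced for every $m$ in the return semigroup. Since the return times generate $d\Z$ and all sufficiently large multiples of $d$ are sums of return times, this yields $\mu(\cF^{d}(A)\triangle A)=0$ on $E$.

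\emph{Main obstacle.} The expected difficulty is exactly the passage from approximate filling along a subsequence to an exact invariance statement under $\cF^d$. It requires combining Step 1 with the $\fT$-invariance of $A$ and $A^c$ and a gcd/semigroup argument on return times. If this direct route fails, the first alternative is to consider the $\cF^{n}$-images of $A$ indexed by residues mod $d$ and show that $\cantor$ is, mod $0$, contained in exactly one of $\cF^{r}(A)$ or $\cF^r(A^c)$ for each residue $r$. The ergodicity of the return maps $\cF_{d,k}$ from Lemma \ref{lem:hopf-on-cantor} would then propagate this to $\Erg_d(\underline\cantor)$, so that $A$ is a union of $\cF^d$-ergodic hulls and hence lies in $\fI_d$.
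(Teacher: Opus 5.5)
Your Step 1 is essentially the paper's filling claim \eqref{eq:final}, up to a repairable technicality. The $\mu$-null set $\cF^{n_j}(A)\triangle A^{n_j}$ need not be null for the Lebesgue measure of the particular leaf $W_j$. This is why the paper applies Lemma \ref{lem:distortion} to the full-measure subset $A_{k,j}$ of points whose stable manifolds along the whole orbit lie in the sets $A^n$.

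The genuine gap is Step 2, exactly where you place the main obstacle: nothing there turns approximate filling into exact $\cF^d$-invariance.
\begin{itemize}
\item The upgrade from ``almost fills'' to ``fills'' is announced but never carried out.
\item Comparing filling times of $A$ and $A^c$ would at best show that those times cannot differ by a return time of $\cantor_k$. That says nothing about $\mu(\cF^d(A)\cap A^c)$.
\item The claim that $\mu(\cF^{n+m}(A)\cap\cF^{n}(A^c))=0$ is ``forced'' does not follow from anything before it.
\item Your fallback via residues mod $d$ is essentially a restatement of the conclusion, and it again needs exact rather than approximate filling.
\end{itemize}

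The missing idea is that exact filling is never needed. It suffices to get a positive-measure self-intersection for \emph{every} positive-measure set of the $\sigma$-algebra $\fT\cap E$, with $E=\Erg_1(\underline\cantor_k)$, and then apply this to the defect sets of $A$.
\begin{itemize}
\item Fix any single $d$ with $\mu(\cF^d(\cantor_k)\cap\cantor_k)>0$. No gcd or semigroup arithmetic is required.
\item By Lemma \ref{lem:auto-int-abstract} there is $\ve>0$ such that every measurable $Q$ with $\mu(\cantor_k\setminus Q)\le\ve$ satisfies $\mu(\cF^d(Q)\cap Q)>0$. For invariant $\mu$ this is simply $\mu(\cF^d(Q)\cap Q)\ge\mu(\cF^d(\cantor_k)\cap\cantor_k)-2\mu(\cantor_k\setminus Q)$.
\item Taking $Q=\cF^m(A)$ from Step 1 and using non-singularity gives $\mu(\cF^d(A)\cap A)>0$ for every $A\in\fT\cap E$ of positive measure.
\item Now suppose $\mu(\cF^d(A)\cap A^c)>0$. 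Then $B=\cF^d(A)\cap A^c$ lies in $\fT\cap E$ by Lemmas \ref{lem:tail-inv} and \ref{lem:K-and-ergodicity}, and it has positive measure. Yet $\cF^d(B)\cap B\subseteq\cF^d(A)\cap\cF^d(A^c)=\emptyset$, a contradiction.
\item The same argument with $A\cap\cF^d(A^c)$ gives $\cF^d(A)=A$ mod $0$.
\end{itemize}
This is Lemma \ref{lem:criterion}, and it is precisely the step your comparison of $A$ with $A^c$ was reaching for. Your gcd choice of $d$ also becomes valid a posteriori. The argument yields $\fT\cap E\subseteq\fI_n$ for every return time $n$ of $\cantor_k$, and $\fI_n\cap\fI_{n'}\subseteq\fI_{\gcd(n,n')}$ by B\'ezout and invertibility.
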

\proof
Fix a Cantor rectangle \(\cantor_k\) and let \(A\) be any positive-measure set in \(\fT \cap \Erg_1( \underline \cantor_k)\). By definition of ergodic hull, \(\Erg_1( \underline \cantor_k) \in \fI_{1}\) and so, by Lemma \ref{lem:K-and-ergodicity}, \(\Erg_1(\underline \cantor_k) \in \fT_{+}\). Therefore, \(A \in \fT_{+}\) as well. Moreover, by the first part of Lemma \ref{lem:ergodic-components} and the fact that \(\cantor_k\) is a positive-measure subset of \(\underline \cantor_k\), we have that \(\Erg_1( \underline \cantor_k) = \Erg_1(\cantor_k)\) mod \(0\). Hence, \(A \subseteq \Erg_1(\cantor_k)\) mod \(0\) and by the definition \eqref{eq:ergodic-hull} of ergodic hull, there exists \(j \in \mathbb Z\) such that \(\cF^{-j}(  \cantor_k) \cap A\) has positive measure. Thus, since \(\cF\) is non-singular, \(\cantor_k \cap \cF^{j}(A)\) has positive measure as well.

We claim that for any \(\ve>0\), there exists \(m \in \bN\) such that 
\begin{equation}\label{eq:final}
    \mu\bigl (\cantor_k \setminus \cF^{m}(A)\bigr ) \le \ve.
\end{equation}
This estimate will be useful later.
To prove the claim, we first recall that according to Lemma \ref{lem:global-manifolds}, for any \(n \in \mathbb Z\), there exists \(A^n = \cF^n(A)\) mod \(0\) such that \(A^n\) is a union of LSMs. Moreover, by the same lemma, the set
\[
\begin{split}
A_{k,j} = \bigl\{x \in \cantor_k \cap \cF^{j}(A) \,:\, W^s(\cF^n (x))\subset A^{n+j}, \ \forall n\in \mathbb Z   \bigr\},
\end{split}
\]
is a full-measure subset of \(\cantor_k \cap \cF^j(A)\). (Notice that \(A_{k,j}\) is a subset of \(\cF^j(A)\) and this explains \(j\) in the superscript). We can pick a point \(x' \in A_{k,j}\) which satisfies Lemma \ref{lem:distortion} for the set \(A_{k,j}\), and whose orbit has infinitely many returns to \(\cantor_k\) (these are typical conditions by (H7) and the aforementioned lemma). Let \(\{n_r\}_{r=1}^{\infty}\) be the sequence of first returns and \(W_r = W^u(\cF^{n_r}(x'))\). By Lemma \ref{lem:distortion}, for any \(\ve >0\) there exists \(r_{*} \in \bN\), such that 
\begin{equation}\label{eq:density-after-returns}
\begin{split}
   \widehat{\bold m}_{W_{r_{*}}}(W_{r_{*}} \setminus \cF^{n_{r_*}}(A_{k,j})) \le \frac{\ve} {C_k},
\end{split}
\end{equation}
where \(C_k >0\) is the constant provided by Lemma \ref{lem:lebw-vsmu}. By the properties of Cantor rectangles and since \(W_{r_*}\) contains the point \(\cF^{n_{r_*}}(x')\in \cantor_k\), we have that \(\cantor_k \subseteq \brush_{W_{r_*}}(W_{r_*})\). Hence,
\begin{equation}\label{eq:cantor-vs-brush}
\begin{split}
\cantor_k \setminus \cF^{n_{r_*} + j}(A) &=  \brush_{W_{r_{*}}} (W_{r_{*}}) \cap \cantor_k \setminus \cF^{n_{r_*} + j}(A) \\
&= (\brush_{W_{r_{*}}} (W_{r_{*}}) \setminus \cF^{n_{r_*} + j}(A) )\cap \cantor_k.
\end{split}
\end{equation}
Moreover, since the sets \(A^{n}\) are equal to \(\cF^n(A)\) modulo null sets,
\begin{equation}\label{eq:brush-vs-tail}
\begin{split}
\brush_{W_{r_{*}}} ( W_{r_{*}}) \setminus  \cF^{n_{r_*} + j}(A) =  \brush_{W_{r_{*}}} ( W_{r_{*}}) \setminus A^{n_{r_*} + j} \quad \text{mod } 0.
\end{split}
\end{equation}
By definition of \(A_{k,j}\), any LSM at \(x \in \cF^{n_{r_*}}(A_{k,j})\) is contained in \(A^{n_{r_*} +j}\). Therefore, recalling that the brush \(\brush_{W_{r_{*}}}(W_{r_{*}})\) is the union of LSMs intersecting \(W_{r_*}\),
\begin{equation}\label{eq:cantor-vs-brush-2}
  \brush_{W_{r_{*}}} ( W_{r_{*}}) \setminus A^{n_{r_*} + j} \subseteq \brush_{W_{r_*}}(W_{r_*} \setminus \cF^{n_{r_*}}(A_{k,j})). 
\end{equation}
Putting together \eqref{eq:cantor-vs-brush}, \eqref{eq:brush-vs-tail} and \eqref{eq:cantor-vs-brush-2}, we have obtained that
\begin{equation}\label{eq:last-cantor-A-tail}
\cantor_k \setminus \cF^{n_{r_*} + j}(A) \subseteq \brush_{W_{r_*}}(W_{r_*} \setminus \cF^{n_{r_*}}(A_{k,j})) \cap \cantor_k \quad \text{mod }0.
\end{equation}
In view of \eqref{eq:last-cantor-A-tail}, we can apply Lemma \ref{lem:lebw-vsmu} with \(H = \cantor_k \setminus \cF^{n_{r_*} + j}(A)\) and \(B= W_{r_*} \setminus \cF^{n_{r_*}}(A_{k,j})\). Therefore,
\[
\begin{split}
\mu\bigl (\cantor_k \setminus \cF^{n_{r_*} + j}(A)\bigr ) \le C_k \widehat{\bold m}_{W_{r_*}}(W_{r_*} \setminus \cF^{n_{r_*}}(A_{k,j}))\le \ve,
\end{split}
\]
where we have used \eqref{eq:density-after-returns} as well. This proves the claim \eqref{eq:final}. 

We now conclude the proof. By recurrence (H7), there exist \(d \in \bN\) such that
\begin{equation}\label{eq:first-return-cantor}
\mu\bigl(\cF^d(\cantor_k) \cap \cantor_k\bigr) >0.
\end{equation}
By \eqref{eq:final}, \eqref{eq:first-return-cantor} and Lemma \ref{lem:auto-int-abstract} of the appendix, there exists \(m \in \bN\) such that
\[
    \mu\bigl(\cF^{d+m}(A) \cap \cF^{m}(A)\bigr) >0,
\]
(to match the notation with Lemma \ref{lem:auto-int-abstract}, \(P = \cantor_k\), \(Q =  \cF^{m}(A)\) and \(T = \cF^d\)). By the equation above and the non-singularity of \(\cF\), we have proved that for any \(A \in \fT \cap \Erg_1(\underline \cantor_k)\) of positive measure,
\[
\mu(\cF^d(A) \cap A) >0
\]
(or, adopting the terminology of Lemma \ref{lem:criterion}, the $\sigma$-algebra \(\fT \cap \Erg_1(\underline \cantor_k)\) is \(d\)-auto-intersecting). Additionally, by Lemma \ref{lem:tail-inv} and the definition of ergodic hull, the \(\sigma\)-algebra \(\fT \cap \Erg_1(\underline \cantor_k)\) is \(\cF^{d}\)-invariant. Therefore, we apply Lemma \ref{lem:criterion} and prove the statement.
\qed
\begin{lemma}\label{lem:final}
\(\fT\) is atomic.
\end{lemma}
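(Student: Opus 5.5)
The plan is to combine Lemma \ref{lem:tail-ergodic-atomic} with Lemma \ref{lem:ergodic-components}. Since \(\cM = \bigcup_k \cantor_k \subseteq \bigcup_k \Erg_1(\underline\cantor_k)\) mod \(0\), and each \(\Erg_1(\underline \cantor_k)\) is an atom of \(\fI_1\), any two hulls either coincide mod \(0\) or are disjoint mod \(0\). Selecting one representative from each class, we get a countable partition \(\cM = \bigcup_i \cE_i\) mod \(0\), with \(\cE_i = \Erg_1(\underline\cantor_{k_i})\). By Lemma \ref{lem:K-and-ergodicity}, each \(\cE_i\) belongs to \(\fT\). It therefore suffices to show that, for each \(i\), the restricted \(\sigma\)-algebra \(\fT \cap \cE_i\) is generated by finitely many atoms. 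Then \(\fT\), being generated by the countably many atoms obtained in this way, is atomic: any \(A\in\fT\) equals \(\bigcup_i (A\cap \cE_i)\), and each piece is a union of atoms of \(\fT\cap\cE_i\), which are also atoms of \(\fT\) because \(\cE_i \in \fT\).

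Fix \(i\) and \(k = k_i\). Lemma \ref{lem:tail-ergodic-atomic} gives \(d\) with \(\fT \cap \cE_i \subseteq \fI_d\). By Lemma \ref{lem:ergodic-components}, \(\fI_d\) is atomic with atoms \(\Erg_d(\underline\cantor_l)\). Hence every \(A \in \fT\cap\cE_i\) is a union mod \(0\) of \(\fI_d\)-atoms contained in \(\cE_i\). Now observe that \(\cE_i = \bigcup_{r=0}^{d-1} \cF^r(\Erg_d(\underline\cantor_k))\). Each \(\cF^r(\Erg_d(\underline\cantor_k))\) is \(\cF^d\)-invariant, and it is an atom of \(\fI_d\) because \(\cF\) maps \(\fI_d\) to itself and is non-singular. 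So \(\cE_i\) is covered by at most \(d\) atoms of \(\fI_d\). Any two of these atoms either coincide or are disjoint mod \(0\), so \(\cE_i\) splits into finitely many \(\fI_d\)-atoms \(P_1,\dots,P_s\) with \(s\le d\).

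Consequently \(\fT\cap\cE_i\) is a sub-\(\sigma\)-algebra of the finite \(\sigma\)-algebra generated by \(P_1,\dots,P_s\). Every sub-\(\sigma\)-algebra of a finite \(\sigma\)-algebra is finite, and its atoms are the minimal nonempty (positive-measure) elements, each a union of some of the \(P_j\). Thus \(\fT\cap\cE_i\) is generated by finitely many atoms, as required.

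The main point to check is the measure-theoretic bookkeeping. One must verify that the selected hulls give a partition mod \(0\). One must also confirm that atoms of \(\fT\cap\cE_i\) are genuinely atoms of \(\fT\); this uses \(\cE_i\in\fT\). Finally, the countable union should be handled using the fact that any \(A\in\fT\) decomposes along the partition. None of these steps seems deep; the substance lies in Lemma \ref{lem:tail-ergodic-atomic}.
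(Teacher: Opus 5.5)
Your proof is correct and follows the same route as the paper. You decompose along the ergodic hulls $\Erg_1(\underline\cantor_k)$, which lie in $\fT$ by Lemma \ref{lem:K-and-ergodicity}, embed each restriction in some $\fI_d$ via Lemma \ref{lem:tail-ergodic-atomic}, and invoke the atomicity of $\fI_d$ from Lemma \ref{lem:ergodic-components}. Your extra observation that $\cE_i=\bigcup_{r=0}^{d-1}\cF^r(\Erg_d(\underline\cantor_k))$ is a union of at most $d$ atoms of $\fI_d$, so that $\fT\cap\cE_i$ is finite mod $0$, usefully makes explicit a step the paper's short proof leaves implicit: passing from $\fT\cap\Erg_1(\underline\cantor_k)\subseteq\fI_{d_k}$ with $\fI_{d_k}$ atomic to atomicity of $\fT$ itself.
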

\proof
By (H4), we have that \(\cM = \bigcup_{k=1}^{\infty}\cantor_k = \bigcup_{k=1}^{\infty}\Erg_1(\underline \cantor_k)\) mod \(0\). Moreover, by Lemma \ref{lem:K-and-ergodicity}, \(\Erg_1(\underline \cantor_k) \in \fI_1 \subset \fT\). Hence, \(
\fT = \cup_{k=1}^{\infty} \bigl(\fT \cap \Erg_1(\underline \cantor_k)  \bigr)\).
On the other hand, by Lemma \ref{lem:tail-ergodic-atomic}, for each \(k\), there exists \(d_k\) such that \(\fT \cap \Erg_1(\underline \cantor_k) \subseteq \fI_{d_k}\) and, by Lemma \ref{lem:ergodic-components}, each \(\fI_{d_k}\) is atomic. This concludes the proof.
\qed
We are ready to conclude the proof of Theorem \ref{thm:strth}.\\\\
\proofof{Theorem \ref{thm:strth}}
By Lemma \ref{lem:ergodic-components}, \(\fI_1\) is atomic. Denote by \(\{\mathcal E_i\}_{i \ge 1}\) the atoms of \(\fI_1\), a.k.a.\ the \emph{ergodic components} of $\cF$. Since \((\cM, \fA, \mu)\) is \(\sigma\)-finite, we have that there are at most countably many \(\mathcal E_i\). This proves statement a) of Theorem \ref{thm:strth}. 

Let us now consider the map $\mathcal{F}$ restricted to one $\mathcal{E}_{i}$. By Lemma \ref{lem:final}, there exists an atom \(A \in (\mathfrak{T} \cap \cE_i)_{+}\). Moreover, by (H7) there exists a minimum $k_i >0 $ such that $\mu(A\cap \mathcal{F}^{k_i}A)>0$. This implies that $\mathcal{F}^{k_i}A = A$ (mod 0) since $A$ is an atom of \(\fT\) and, by Lemma \ref{lem:tail-inv}, $\fT$ is $\cF^{-1}$-invariant. Denoting $\mathcal{E}_{i}^{j}=\cF^{j}(A)$, \(j \in \{0,...,k_i-1\}\), we have that \(\cup_{j}\mathcal{E}_{i}^{j} = \cE_i\) and, for any \(i,j\),
\begin{equation}\label{eq:inv-tail-sets}
    \cF^{k_i}(\cE_i^j) = \cE_i^j.
\end{equation}
It remains to prove that the map \(\cF^{k_i}\) restricted to \(\mathcal E_{i}^j\) is K-mixing for each \(i\) and \(j\). Let \(\fS_{i,j} = \fS \cap \cE_{i}^j\). For any \(q \in \mathbb Z\), \eqref{eq:inv-tail-sets} implies that
\[
\cF^{q k_i} \fS_{i,j} = (\cF^{q k_i}\fS) \cap \cE_{i}^j,
\]
and the first two properties of Definition \ref{def:K}, for the dynamical system \((\cE_{i}^j, \fA \cap \cE_i^j, \mu, \cF^{k_i})\) and the sub-\(\sigma\)-algebra \(\fS_{i,j}\), follow respectively by Lemmata \ref{lem:first-prop-K} and  \ref{lem:second-K-property}, while the third follows by Lemma \ref{lem:final}. 
\qed

\section{Aperiodic Lorentz gases}\label{sec:aperiodic-LG}

In this section we show that the abstract setting of Section \ref{sec:settingandresults} applies to a certain class of aperiodic Lorentz gases (ALGs) as explained below. 
Let \(\cI \subseteq \bN\) be a infinite set of indices and \(\{\cO_i\}_{i \in \cI}\) a family of \textit{scatterers}, which are \(\cC^3\) closed, pairwise disjoint, convex sets. We consider two possible ambient spaces for ALGs: the two dimensional Euclidean plane and the infinite strip (see Figure~\ref{fig:lorentz-tube}). Accordingly, the billiard table can be one of the following:
\[
\mathcal D = \begin{cases} \bR^2 \setminus \cup_{i \in \cI}\cO_i = \cD_{LG},\\
(\bR \times \mathbb T) \setminus \cup_{i \in \cI}\cO_i = \cD_{LT}. \end{cases}
\]
Here, \(\mathbb T\) is the one-dimensional torus. The subscripts LG stands for Lorentz gas and LT for Lorentz tube, respectively. Of course, we could be more general\footnote{For example, with some extra care the periodic boundary conditions in the Lorentz tube could be replaced by reflecting walls, or we could consider Lorentz tubes made of different types of cells.}, but we consider these two particular cases because \(\cD = \cD_{LG}\) corresponds to the historically first models of ALGs while for \(\cD = \cD_{LT}\) there are available stronger results on recurrence (see Remark \ref{rem:recurrent-billiards} below). 
\begin{figure}[ht]
\centering
\begin{tikzpicture}[
    obstacle/.style={fill=gray!25, draw=black, thick}
]
 
\begin{scope}[scale=1]

\begin{scope}
    \clip (-0.5,0) rectangle (8.3,4);
 
    \draw[obstacle] (0.9,1.2) circle (0.45);
    \draw[obstacle] (2.4,3.0) ellipse (0.35 and 0.50);
    \draw[obstacle] (3.8,1.0) circle (0.48);
    \draw[obstacle] (5.0,3.1) ellipse (0.45 and 0.30);
    \draw[obstacle] (6.2,1.9) circle (0.40);
    \draw[obstacle] (7.3,3.2) circle (0.32);
    \draw[obstacle] (1.3,3.5) ellipse (0.28 and 0.38); 
    \draw[obstacle] (1.7,0.5) circle (0.30);
    \draw[obstacle] (4.8,0.55) ellipse (0.35 and 0.25);
 limitato ---
    \draw[obstacle] (0.3,2.6) circle (0.28);
    \draw[obstacle] (7.6,1.2) circle (0.54);
    \draw[obstacle] (4.3,2.3) circle (0.26);

    \draw[obstacle] (3.3,0) circle (0.42);
    \draw[obstacle] (3.3,4) circle (0.42);
 
    \draw[obstacle] (6.6,0) circle (0.38);
    \draw[obstacle] (6.6,4) circle (0.38);
\end{scope}

\draw[dashed] (-0.5,0) -- (8.3,0);
\draw[dashed] (-0.5,4) -- (8.3,4);
 
\node at (-0.9,2) {$\cdots$};
\node at (8.7,2) {$\cdots$};
 \end{scope}
\end{tikzpicture}
\caption{An aperiodic Lorentz tube. The aperiodic Lorentz gas instead extends in both directions in the plane.}
\label{fig:lorentz-tube}
\end{figure}
To define the billiard map \(\cF\), we consider a point-like particle of unit mass and speed that is moving without friction on \(\cD\) and when it hits the boundary of some scatterer, the trajectory reflects according to the law that the angle of incidence equals the angle of reflection. The map \(\cF\) is the Poincaré map corresponding to collisions with the scatterers. The phase space for the billiard dynamics is
\begin{equation}\label{eq:phase-space-billiard}
\cM = \bigcup_{i \in \cI}\cM_{i}, \qquad \cM_{i} = \{(q,v) \in \partial \cO_{i}\times S^1 \,:\, \langle v, n_q \rangle \ge 0\}, 
\end{equation}
where \(n_q\) is the outward normal vector of \(\partial \cO_i\) at \(q\) (\(q\) is the point of collision
and \(v\) is the post-collisional velocity). The standard coordinates on \(\cM\) are \((i, r, \varphi)\), where \(i \in \cI\) identifies the scatterer, \(r\) is the arc-length parameter on \(\partial \cO_i\) and \(\varphi \in [-\pi/2, \pi/2]\) is the angle between \(n_q\) and \(v\) oriented clockwise. It is well known \cite[Lemma 2.35]{MR2229799} that the billiard map preserves a unique (up to factors) absolutely continuous invariant measure \(\mu\), whose density w.r.t.\ Lebesgue is given by 
\begin{equation}\label{eq:invariant-measure-billiard}
d\mu (r, \varphi) = \cos \varphi \, d\varphi dr.
\end{equation}
We are interested in the case where the sum of the arc-length of the scatterers is infinite, so that \(\mu(\cM) = \infty\). For notational convenience, we identify \(\cup_i (\{i\} \times [0, |\partial \cO_i|])\) with \(\mathbb R\), so that we have only one arc-length coordinate and one angle coordinate and we call \(x = (r,\vf)\). We denote by \(\kappa\) the curvature of the boundary, which is the modulus of the second derivative of the arc-length parametrization of \(\partial \cO_i\), and by \(\tau(x)\) the free path of \(x\). Both \(\kappa\) and \(\tau\) are functions \(\cM \to \bR^{+}\). We consider the following class \(\mathcal K\) of ALGs,
\begin{itemize}
    \item[(ALG1)] Any ALG in \(\mathcal K\) is recurrent,
\end{itemize}
\begin{itemize}
\item [(ALG2)] For any ALG in \(\mathcal K\), there exist \(k_m, k_M, \tau_m, \tau_M >0\) such that, for any \(x \in \cM\),
    \[
    \begin{split}
    \tau_m \le \tau(x) \le \tau_M \quad \text{and} \quad k_m \le k(x) \le k_M.
    \end{split}
    \]
\end{itemize}

The main result of this section is the following theorem.

\begin{theorem}\label{thm:K-dec-billiards}
    For any ALG in \(\mathcal K\), either in \(\cD_{LG}\) or \(\cD_{LT}\), the associated billiard map \(\cF: \cM \to \cM\) is K-mixing.
\end{theorem}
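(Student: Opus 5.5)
The plan is to deduce the statement from Corollary \ref{corlStr}. Two things must be checked: that every ALG in \(\cK\) satisfies (H1)--(H7), and that \(\cF^n\) is ergodic for every \(n \ge 1\).

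\emph{Verifying (H1)--(H7).} (H7) is exactly (ALG1). Under (ALG2) (bounded free path, bounded curvature away from \(0\) and \(\infty\), \(\cC^3\) convex scatterers), the billiard is a dispersing billiard with finite horizon and uniform constants. The standard local theory of \cite[Chapters 4--5]{MR2229799} applies verbatim, because it only uses these uniform bounds and never compactness:
\begin{itemize}
\item (H1) holds, with \(\cS_{\pm 1}\) the countable union of the grazing-collision curves.
\item We obtain uniform hyperbolicity in invariant cones.
\item With homogeneity strips, (H2) follows: existence a.e.\ of local stable and unstable manifolds of bounded length, plus the inclusions \eqref{eq:non-unif-hyp}.
\item (H5) and (H6) follow from the uniform distortion bounds and the absolute continuity of the holonomies.
\end{itemize}
The uniformity of these constants in \(x \in \cM\) is precisely what (ALG2) buys. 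Measurability and the uniform bounds in (H3) follow from absolute continuity together with the density \(\cos\vf\) of \(\mu\), restricted to homogeneity strips.

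\emph{Cantor rectangles (H4).} This is the step I expect to be the main obstacle. I would use the construction of \cite[Section 7.11]{MR2229799}. Around \(\mu\)-a.e.\ point \(x\), the "fundamental theorem" and the growth lemmas give a neighborhood in which a positive-measure fraction of points have LUMs and LSMs longer than some \(\delta(x)\). Intersecting these families yields a Cantor rectangle \(\cantor\) with \(\mu(\cantor)>0\) containing \(x\) in its closure, and with \(|W^u| \ge c(\cantor)\) on \(\cantor\).

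Since \(\cM\) is a countable union of compact pieces \(\cM_i\) on which the local theory holds uniformly, a Vitali-type/exhaustion argument produces countably many rectangles covering \(\cM\) mod \(0\). Concretely, I would take a maximal countable family and argue that the uncovered set cannot have positive measure, because around any of its density points a new rectangle could be built.

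Finiteness of \(\mu(\underline\cantor_k)\) is subtle, since \(\underline \cantor_k\) is an infinite saturation. Here I would use that LSMs and LUMs have uniformly bounded length and each lies in a single \(\cM_i\). By (ALG2), only finitely many scatterers lie within bounded distance of a given one, and the iterated saturation \(\cantor^n_k\) moves only along manifolds. One has to control this growth: I would first try to show \(\underline\cantor_k \subseteq \cM_i\) for the scatterer \(i\) containing \(\cantor_k\), which gives \(\mu(\underline\cantor_k)\le \mu(\cM_i)<\infty\). If the saturation leaks through singular curves, I would instead replace \(\underline\cantor_k\) by a suitably truncated saturated set. The ergodicity argument of Lemma \ref{lem:hopf-on-cantor} only needs chain-connectedness within one scatterer's phase space.

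\emph{Ergodicity of all powers.} Ergodicity of \(\cF\) for recurrent ALGs is known from \cite{MR1992667}, and it also follows from the local Hopf argument plus the local ergodic theorem linking neighbouring rectangles. For \(\cF^n\), I would show that a single scatterer's rectangles all lie in one \(\cF^n\)-ergodic component, via the local ergodic theorem applied to \(\cF^n\) (the Sinai--Chernov local ergodic theorem holds for powers). Then I would exclude a nontrivial cyclic structure \(k_i>1\) by exhibiting periodic orbits, or returns to one rectangle, at coprime times \(n\) and \(n+1\).

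A cleaner route is to note that a nontrivial cycle \(\cE^0,\dots,\cE^{k-1}\) would be a tail set. By Lemma \ref{lem:global-manifolds} it would be saturated by both stable and unstable manifolds, hence by the local ergodic theorem a union of full neighbourhoods of phase points mod \(0\). Connectedness of each \(\cM_i\) minus singularities, together with transitions between scatterers, then forces \(\cF(\cE^0)\cap\cE^0\neq\emptyset\), so \(k=1\). Corollary \ref{corlStr} then concludes.
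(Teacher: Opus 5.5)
The architecture is the paper's. You verify (H1)--(H6) by importing the local Chernov--Markarian theory, with uniformity coming from (ALG2), take (H7) from (ALG1), and conclude with Corollary~\ref{corlStr}. Your first idea for $\mu(\underline\cantor_k)<\infty$ is also what the paper does, and it needs no fallback. Since $[x,y]\neq\emptyset$ for $x,y\in\cantor_k$, since LSMs and LUMs are connected curves, and since the $\cM_i$ are the connected components of $\cM$, the rectangle and its whole saturation lie in one $\cM_i$.

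The genuine gap is ruling out a nontrivial cycle $k>1$ for the powers $\cF^n$. Local ergodicity (your local ergodic theorem for $\cF^n$, or Lemma~\ref{lem:K-return-map}) only shows that each $\cM_i$ lies mod $0$ in a single cyclic class $\cE^j$, with $\cF$ permuting the classes. Neither ``connectedness of $\cM_i$ minus singularities together with transitions between scatterers'' nor recurrence can force $\cF(\cE^0)\cap\cE^0\neq\emptyset$. If the graph of positive-measure transitions between scatterers were bipartite, $k=2$ would be consistent with everything you list, and recurrence only gives return times that could all be multiples of $k$. Your alternative, ``exhibiting periodic orbits or returns at coprime times,'' names the right target, but you give no construction, and that construction is the whole content of the step.

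The paper supplies it in Lemma~\ref{lem:very-standard} with a tangency argument:
\begin{itemize}
\item Fix $q\in\partial\cO_0$ and rotate the outgoing velocity.
\item Bounded horizon and convexity force a direction $v_*$ tangent to some scatterer $\cO_1$.
\item Directions near $v_*$ give non-singular trajectories $\cO_0\to\cO_1$, $\cO_0\to\cO_2$ and $\cO_1\to\cO_2$ for a third scatterer $\cO_2$.
\item Hence if $\cM_0\subseteq P$ for an atom $P$ of $\fI_m$ (a union of whole $\cM_i$), then $\cM_2\subseteq\cF(P)\cap\cF^2(P)$. These are two transition paths of coprime lengths $1$ and $2$ between the same scatterers.
\item This forces $k=1$, hence $\fI_m=\fI_1$, and $\fI_1$ is trivial by connectivity of the configuration.
\end{itemize}
Without this, or an equivalent odd-cycle construction, your proof that all powers are ergodic is incomplete, and so is the appeal to Corollary~\ref{corlStr}.
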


Since the literature on billiards is vast and many facts are well-known, we sketch some fairly standard parts of the arguments referring to original or expository references. We start with an extension of \cite[Lemma 4.5]{MR1992667}.
\begin{lemma}\label{lem:K-return-map}
    Fix any \(i \in \cI\) and \(m \in \bN\), and denote by \((\cF^m)_{\cM_i}\) the first return map induced by \(\cF^m\) on \(\cM_{i}\). Then \((\cM_{i}, (\cF^m)_{\cM_i}, \mu)\) is ergodic.
\end{lemma}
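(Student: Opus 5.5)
We prove ergodicity of the induced map \(G=(\cF^m)_{\cM_i}\) by a local Hopf argument on the compact piece \(\cM_i\), following the structure of \cite[Lemma 4.5]{MR1992667}.

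\emph{Well-definedness.} By (ALG1) and \(0<\mu(\cM_i)<\infty\), a.e.\ point of \(\cM_i\) returns to \(\cM_i\) under \(\cF^m\) infinitely often in both time directions. Hence \(G\) and \(G^{-1}\) are defined a.e.\ and preserve the finite measure \(\mu|_{\cM_i}\).

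\emph{Hyperbolic structure.} By (ALG2), free paths are bounded in \([\tau_m,\tau_M]\) and curvatures in \([k_m,k_M]\). These are exactly the constants entering the standard dispersing-billiard estimates in \cite[Chapters 4--5]{MR2229799}, and those estimates only use the local geometry between consecutive collisions. So the cone fields, the uniform expansion \(\Lambda>1\) per collision, the bounded distortion on homogeneous unstable curves, and the structure of the singularity set all hold with constants independent of the scatterer. In particular, a.e.\ \(x\in\cM\) has LSM and LUM, and the stable and unstable foliations are absolutely continuous.

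\emph{Hopf chains.} For a continuous \(f\) on \(\cM_i\), the forward and backward Birkhoff averages \(f^\pm\) under \(G\) exist and agree a.e. They are constant along \(\cF\)-stable (resp.\ unstable) manifolds, because points on the same LSM stay on the same LSMs of \(\cF^{mn}\)-iterates and therefore share return times to \(\cM_i\), up to the null set of points hitting the boundary of \(\cM_i\). To pass from "constant on leaves" to "constant a.e." we use the local ergodicity theorem for semi-dispersing billiards (Sinai--Chernov, Liverani--Wojtkowski; \cite[Theorem 6.?]{MR2229799}). It says that every point of \(\cM_i\) with a sufficiently hyperbolic orbit that hits singularities at most once has a neighbourhood contained mod 0 in one ergodic component of \(G\).

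\emph{Main obstacle.} The hardest step is verifying the hypotheses of the local ergodicity theorem in this non-compact, infinite-measure setting. These are (i) the Chernov--Sinai ansatz (a.e.\ singular curve point has \(\cF\)-unstable manifold) and (ii) the tail bound \(\mu(\{x : \text{dist}(x,\cS_{\pm1})<\ve\}\cap\cM_i)=o(\ve)\)-type regularity near singularities. The estimates need to be applied to \(\cF\) itself, not to \(G\). The plan is to run the theorem for \(\cF\) and the leaves of \(\cF\), using that \(\cS_{\pm n}\cap\cM_i\) involves only finitely many scatterers (by the bound \(\tau\le\tau_M\)). This lets all the required properties be checked in a compact region with uniform constants. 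The Birkhoff averages are then taken for \(G\), where the finite measure \(\mu(\cM_i)\) makes the averaging argument work.

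\emph{Conclusion.} Every open neighbourhood in the local ergodicity theorem lies in a single ergodic component of \(G\). Since \(\cM_i\) is connected (a cylinder \(\partial\cO_i\times[-\pi/2,\pi/2]\)) and the set of sufficiently hyperbolic points is open and dense with full measure, these neighbourhoods overlap into one component. Hence \(G\) is ergodic.
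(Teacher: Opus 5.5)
Up to the last step, your route is the paper's. The paper also runs a Hopf argument on $\cM_i$ in which the leaves used for $(\cF^m)_{\cM_i}$ are the LSMs and LUMs of $\cF$; return times are constant on them because they never meet the singularity set in the relevant time direction. Hopf chains come from local ergodicity for dispersing billiards with the uniform constants of (ALG2), and Birkhoff averages are taken for the induced map on the finite-measure set $\cM_i$.

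\emph{The gap is in the globalization.} That the good set is ``open, dense and of full measure'' does not make the local-ergodicity neighbourhoods chain into one component, because such a set need not be connected. For example, $\cM_i\setminus\cS_1$ is open, dense and of full measure. Yet points whose next collision is on different scatterers lie in different components of it. Also, in a dispersing billiard every point is hyperbolic, so hyperbolicity is not the relevant restriction.

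The missing ingredient is that the exceptional set has codimension two.
\begin{itemize}
\item Local ergodicity holds at every point whose orbit has at most one tangential collision. This includes points lying on a single singularity curve, and these are exactly the points that let you cross the curves of $\cS_{\pm1}$.
\item Points with tangential collisions at two different times form a countable set. By (ALG2), for each $n$ only finitely many scatterers are reachable from $\cM_i$ in $n$ steps, so $\cS_{\pm n}\cap\cM_i$ is a finite union of smooth curves. Distinct singularity curves meet only in isolated points (transversally when one is stable and the other unstable).
\item Hence the interior of $\cM_i$ minus this countable set is path-connected. Cover a path between two good points by finitely many local-ergodicity neighbourhoods. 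Consecutive neighbourhoods overlap in a nonempty open set, which has positive $\mu$-measure, so all lie in one ergodic component.
\end{itemize}

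Two smaller corrections:
\begin{itemize}
\item The regularity hypothesis is $\mu(\{x:\mathrm{dist}(x,\cS_{\pm1})<\ve\}\cap\cM_j)=O(\ve)$ uniformly in $j$, not $o(\ve)$.
\item Your claim that everything can be checked ``in a compact region'' is not quite right, because the forward orbit of $\cM_i$ visits infinitely many scatterers. The $\ve$-neighbourhood of $\cS_{\pm1}$ has infinite measure in $\cM$. So the usual sum over times $n$ must be restricted to the scatterers reachable from $\cM_i$ in $n$ steps. By (ALG2) their number grows only polynomially in $n$, and this is absorbed by the factor $\Lambda^{-n}$.
\end{itemize}
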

\cite[Lemma 4.5]{MR1992667} treats only the case \(m=1\) and \(\cD_{LG}\). However, the proof is a classical Hopf argument and it extends to the present setting with the obvious modifications. To provide some details, ergodicity is proved by constructing \textit{Hopf paths} between almost every pair of points in \(\cM_i\). These Hopf paths consist of sequences of alternating LSMs and LUMs, and their existence between almost every pair of points establishes ergodicity, in analogy with the proof of Lemma~\ref{lem:hopf-on-cantor}. Here the LSMs and LUMs are taken with respect to the map \((\cF^m)_{\cM_i}\). The key observation is that the LSMs and LUMs of \((\cF^m)_{\cM_i}\) coincide with those of \(\cF\), since the latter never intersect the discontinuity set (in the future for LSMs, and in the past for LUMs). The result then follows from the local ergodicity established for billiards and some additional observation on the discontinuity set.

The second result is a standard consequence of Lemma \ref{lem:K-return-map} (see e.g., \cite[Proposition 4.1]{MR4514477} or \cite[Lemma 6.23]{MR2229799}). We report the proof because of the slightly different setting.
\begin{lemma}\label{lem:very-standard}
  Fix any \(m \in \bN\). Then \((\cM, \cF^m,\mu)\) is ergodic.
\end{lemma}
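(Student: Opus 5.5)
Let \(A \subseteq \cM\) be an \(\cF^m\)-invariant set of positive measure, i.e.\ \(\cF^{-m}(A) = A\) mod \(0\). The plan is to show that \(A = \cM\) mod \(0\) by reducing to the ergodicity of the induced maps \((\cF^m)_{\cM_i}\) from Lemma \ref{lem:K-return-map}, and then propagating full measure from one scatterer to all others.

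First, the reduction. Since \(\cM = \bigcup_{i \in \cI} \cM_i\), there is some \(i\) with \(\mu(A \cap \cM_i) > 0\). Recurrence (ALG1) passes to \(\cF^m\): an invertible non-singular recurrent map has recurrent powers, as in the standard Halmos-type argument. Hence \((\cF^m)_{\cM_i}\) is a.e.\ defined. Because \(A\) is \(\cF^m\)-invariant, \(A \cap \cM_i\) is invariant under \((\cF^m)_{\cM_i}\) mod \(0\). By Lemma \ref{lem:K-return-map} and \(\mu(\cM_i) < \infty\), it follows that \(A \supseteq \cM_i\) mod \(0\). Consequently \(A \supseteq \Erg_m(\cM_i) = \bigcup_{j \in \mathbb Z} \cF^{mj}(\cM_i)\) mod \(0\). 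The same reasoning shows that for every \(i'\), either \(\cM_{i'} \subseteq A\) or \(\cM_{i'} \cap A = \emptyset\) mod \(0\).

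Second, the propagation, which I expect to be the main obstacle. We need to pass from \(\cM_i\) to every \(\cM_{i'}\) using only powers of \(\cF^m\), not \(\cF\) itself. Ergodicity of \(\cF\) is known (this is the case \(m=1\) of the same argument), so \(\bigcup_{n \in \mathbb Z} \cF^n(\cM_i) = \cM\) mod \(0\). The task is therefore to show that \(\mu(\cF^{mj}(\cM_i) \cap \cM_{i'}) > 0\) for some \(j\), for every \(i'\).

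I would do this by chaining through neighbours. Call \(i \sim i'\) if \(\mu(\cF(\cM_i) \cap \cM_{i'}) > 0\); the scatterers are connected under \(\sim\) by ergodicity of \(\cF\). It then suffices to show that \(\cM_{i'}\) is also reached by some iterate \(\cF^{mj}(\cM_i)\). Ergodicity of \((\cF^m)_{\cM_i}\) and the analogous statement for \((\cF^m)_{\cM_{i'}}\) let us absorb residues. Set \(E = \cF(\cM_i) \cap \cM_{i'}\), which has positive measure. By Lemma \ref{lem:K-return-map} applied on \(\cM_i\), the set \(\cM_i\) is contained mod \(0\) in the \(\cF^m\)-orbit of \(\cF^{-1}(E)\). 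Hence \(\Erg_m(\cM_i) \supseteq \Erg_m(\cF^{-1}E) = \cF^{-1}\Erg_m(E)\).

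Also \(\Erg_m(E) \supseteq \Erg_m(\cM_{i'})\) mod \(0\), by Lemma \ref{lem:K-return-map} on \(\cM_{i'}\), and conversely the two are equal. So \(\Erg_m(\cM_{i'}) = \cF\,\Erg_m(\cM_i)\). The \(\cF^m\)-ergodic hulls are thus permuted cyclically by \(\cF\), and there are at most \(m\) of them, because \(\cF^m\) fixes each one. Hence \(\cM\) splits into hulls \(H_0, \dots, H_{p-1}\) with \(p \mid m\), cyclically permuted by \(\cF\).

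Finally, I would rule out \(p > 1\). Each \(H_\ell\) contains entire sets \(\cM_{i}\). The scatterer \(\cO_i\) admits orbit segments that return to \(\cM_i\) after one collision with a neighbour and back, i.e.\ in 2 steps. It also admits segments that return in 3 steps, for instance by bouncing off two neighbours; (ALG2) guarantees that such configurations exist. These give \(\mu(\cF^2\cM_i \cap \cM_i) > 0\) and \(\mu(\cF^3\cM_i \cap \cM_i) > 0\). Since \(\cF^2\) and \(\cF^3\) both map \(H_\ell\) to itself only if \(p\) divides both 2 and 3, we get \(p = 1\). Therefore \(A = \cM\) mod \(0\).

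The geometric existence of the 2- and 3-step returns is the step I would need to verify most carefully. If it proves delicate, I would instead use the fact that \(\cF\) restricted near a periodic orbit of period 2, namely a segment normal to two facing scatterers, creates returns of all large periods.
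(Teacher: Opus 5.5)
Your reduction step is correct and matches the first half of the paper's proof: by Lemma \ref{lem:K-return-map}, every $\cF^m$-invariant set is a union of whole $\cM_i$'s. So is your cyclic structure, in which $\cF$ permutes the $\cF^m$-ergodic hulls with some period $p\mid m$.

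\textbf{The gap: forcing $p=1$.} This step is the only genuinely geometric content of the lemma, and it is not established.
\begin{itemize}
\item You assert that some scatterer has a positive-measure set of exact 3-step returns and that (ALG2) ``guarantees'' this, but you give no argument.
\item Your fallback is false. Orbits that stay close to a period-2 orbit between $\cO_i$ and $\cO_j$ alternate $i,j,i,j,\dots$, so while shadowing it they return to $\cM_i$ only at even times. An odd return needs an excursion of odd combinatorial length, which is precisely what has to be shown.
\end{itemize}

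\textbf{What is actually needed.} Even cycles cost nothing. The time reversal $(r,\varphi)\mapsto(r,-\varphi)$ conjugates $\cF$ to $\cF^{-1}$ and preserves each $\cM_i$, so $\mu(\cF(\cM_i)\cap\cM_j)>0$ implies $\mu(\cF(\cM_j)\cap\cM_i)>0$. Consider the graph with an edge $i\to i'$ whenever $\mu(\cF(\cM_i)\cap\cM_{i'})>0$; you need it to be non-bipartite. Define the label $\ell(i)$ by $\cM_i\subseteq H_{\ell(i)}$; it increases by $1$ mod $p$ along every edge. Hence two walks with the same endpoints and lengths $1$ and $2$ suffice, and no actual orbit returning in exactly three steps is required.

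\textbf{How the paper gets these walks.} It uses a tangency argument:
\begin{itemize}
\item Rotate the outgoing velocity at a point of $\cO_0$ until the trajectory grazes some $\cO_1$.
\item Nearby directions on one side miss $\cO_1$ and hit some $\cO_2$ behind it.
\item Perturbing the grazing line yields non-singular trajectories $\cO_0\to\cO_1$, $\cO_1\to\cO_2$ and $\cO_0\to\cO_2$.
\end{itemize}
Thus $\cM_2\subseteq\cF(P)\cap\cF^2(P)$ for the atom $P\supseteq\cM_0$, which forces the period to be one.

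\textbf{A smaller circularity.} You justify the connectivity of $\sim$ by ``ergodicity of $\cF$ is known (this is the case $m=1$ of the same argument)''. For $m=1$, connectivity is exactly what your propagation step needs, so this is circular. Either cite the ergodicity of recurrent ALGs from the literature, or argue directly, as the paper does in its last step, that the scatterers cannot split into two families with no non-singular trajectory between them.
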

\proof
We want to show that the $\sigma$-algebra \(\fI_m\) given by \(\cF^m\)-invariant sets is trivial. We first prove that \(\fI_m = \fI_1\). Notice that the $\sigma$-algebra \(\fI_m\) is smaller than the one generated by the partition \(\{\cM_{i}\}_{i \in \cI}\) (i.e., elements of \(\fI_m\) are unions of different \(\cM_{i}\)). Indeed, the existence of two distinct \(\cF^m\)-invariant sets \(A\) and \(B\) and \(i_{0} \in \cI\) such that \(\mu(A \cap \cM_{i_{0}}) >0\) and \(\mu(B \cap \cM_{i_{0}}) >0\) contradicts Lemma \ref{lem:K-return-map}. In particular, \(\fI_m\) is atomic. Consider an atom \(P\) of \(\fI_m\) and let \(k >0\) be the smallest positive integer such that \(\mu(P\cap\cF^{k}(P))>0\). We have that \(P\), \(\cF(P)\), \(\cF^2(P)\), ..., \(\cF^{k - 1}(P)\) are disjoint mod 0. Moreover, since \(P\) is an atom and \(P\cap \cF^{k}(P)\) is a positive measure and \(\cF^m\)-invariant subset of \(P\), we have that \(P = \cF^{k}(P)\) mod \(0\). Let \(\cO_0\) be a scatterer whose associated part of the phase space 
\[
\cM_0 =  \{(q,v) \in \partial \cO_{0}\times S^1 \,:\, \langle v, n_q \rangle \ge 0\}
\]
is a subset of \(P\). Take \((q,v) \in \cM_0\), \(\langle v,n_q\rangle \neq 0\), and vary the velocity directed outwardly w.r.t.\ \(\cO_0\). Because of the bounded horizon assumption (ALG2), each trajectory will intersect the boundary of some other scatterer in the point \(q_1 = q_1(q,v)\). Hence, varying clockwise the velocity \(v\), by the convexity of the scatterers, we are bound to find a direction \(v_{*}\) such that \(q_1(q,v_{*})\) results from a tangent collision to some scatterer, which we denote by \(\cO_1\). By convexity of \(\cO_1\), we can also require that \(q_1(q, v) \in \partial \cO_2\) for some other scatterer \(\cO_2 \neq \cO_1\) for every velocity \(v\) that is less clockwise-oriented than \(v_*\) but close enough to \(v_*\). This implies (by continuity of the flow) that there exist non-singular trajectories connecting \(\cO_0\) to \(\cO_1\), \(\cO_1\) to \(\cO_2\) and \(\cO_0\) to \(\cO_2\) (see Figure~\ref{fig:tangent-trajectory}). Denote by \(\cM_1\) and \(\cM_2\) the portions of phase space corresponding to \(\cO_1\) and \(\cO_2\). Since \(\cM_0\) is contained in \(P\), it follows that both \(\cM_1\) and \(\cM_2\) are contained in \(\cF(P)\). However, the existence of non singular trajectories from \(\cM_1\) to \(\cM_2\) shows that \(\cM_2\) is contained in \(\cF^2(P)\) as well. The above fact implies that \(\cF(P)\) and \(\cF^2(P)\) are not disjoint mod \(0\) and this forces \(k\) to be equal to one. Hence, \(P = \cF(P)\) mod \(0\) for each atom \(P \in \fI_m\) so that \(\fI_m = \fI_1\) as anticipated. To conclude, we show that \(\fI_1\) is the trivial $\sigma$-algebra. We have already shown that \(\fI_1\) is smaller than the $\sigma$-algebra generated by \(\{\cM_i\}_{i \in \cI}\). Hence, if the system were not ergodic, the set of scatterers would split into two invariant families. Our geometric assumptions would then easily imply that a non-singular trajectory connects one scatter from one family to a scatterer of the other. Hence, \(\cF\) would transfer a positive-measure subset from an invariant family to the other. This contradiction concludes the proof.
\qed

\begin{figure}[ht]
\centering

\begin{tikzpicture}[scale=0.9]

\begin{scope}[rotate around={12:(0,0)}]
  \draw[thick, fill=gray!8] (0,0.08) ellipse (1.05cm and 0.90cm);
\end{scope}
\node[font=\small] at (0,0.2) {$\mathcal{O}_0$};

\begin{scope}[rotate around={-18:(1.6,2.0)}]
  \draw[thick, fill=gray!8] (1.43,2.5) ellipse (0.75cm and 0.63cm);
\end{scope}
\node[font=\small] at (1.6,2.3) {$\mathcal{O}_1$};

\begin{scope}[rotate around={8:(1.2,4.5)}]
  \draw[thick, fill=gray!8] (1.2,4.45) ellipse (0.95cm and 0.83cm);
\end{scope}
\node[font=\small] at (1.2,4.6) {$\mathcal{O}_2$};

\draw[thick, red] (0,1.0) -- (1.253,3.601);

\draw[dashed, blue!80!black, thick] (0,1.0) -- (0.866,3.665);

\filldraw[black] (0,1.0) circle (1.5pt)
  node[below left,font=\small] {$q$};

\filldraw[black] (0.869,2.804) circle (1.5pt)
  node[left=6pt,red,font=\small] {$q_1(q,v_*)$};

\filldraw[black!80!black] (0.866,3.665) circle (1.5pt)
  node[blue, left=8pt,font=\small] {$q_1(q,v)$};

\end{tikzpicture}

\caption{The tangent trajectory with velocity $v_*$ hits
$\partial\mathcal{O}_1$ at $q_1(q,v_*)$. Perturbing this trajectory yields non-singular trajectories connecting $\mathcal{O}_0$ directly to
$\mathcal{O}_2$ at $q_1(q,v)$, as well as connecting $\mathcal{O}_0$ to
$\mathcal{O}_1$ and $\mathcal{O}_1$ to $\mathcal{O}_2$.
}
\label{fig:tangent-trajectory}
\end{figure}

\proofof{Theorem \ref{thm:K-dec-billiards}} We first show that the billiard map associated to any ALG in \(\mathcal K\) satisfies (H1)-(H6), while (H7) is precisely given by (ALG1). These properties have been established for a large class of hyperbolic billiards in compact domains. It is important to notice that, because of the uniformity assumption (ALG2), the particle and its neighboring trajectories would not distinguish between an ALG and a Sinai billiard on the two-dimensional torus. Therefore, given the local nature of most of the arguments, we only sketch part of the proofs, referring to the appropriate literature and highlighting the few differences with the compact case.

\bigskip\noindent
\textbf{(H1)}: The space \(\cM\) in \eqref{eq:phase-space-billiard} is a Riemannian manifold with boundaries (it is topologically the union of infinitely many cylinders) and the measure \(\mu\) of \eqref{eq:invariant-measure-billiard} is equivalent to the standard volume form \(drd\varphi\) on \(\cM\). Setting \((r_n, \varphi_n) = \cF^n(r, \varphi)\), the singularity sets are
\[
\begin{split}
\cS_1 &= \{(r,\varphi) \in \cM \,:\, |\varphi_1| = \pi/2 \quad \text{or} \quad |\varphi| = \pi/2\},\\
\cS_{-1} &= \{(r,\varphi) \in \cM \,:\, |\varphi_{-1}| = \pi/2 \quad \text{or} \quad |\varphi| = \pi/2\}.
\end{split}
\]
These are countably many closed curves.  The \(\cC^3\) regularity of the scatterers implies that the map and its inverse are twice differentiable on \(\cM \setminus \cS_{1}\) and \(\cM \setminus \cS_{-1}\), respectively (see \cite[Theorem 2.33]{MR2229799}). We now review LSUMs and their distortion properties.\\\\
\textbf{(H2)}, \textbf{(H3)}, \textbf{(H6)}: The existence of LSUMs and their absolute continuity is stated in \cite[Theorem 2.3]{MR2237472} for the case \(\cD = \cD_{LG}\) (based on \cite{MR1992667}) and \cite[Theorem 2.1]{MR2741899} for the case \(\cD = \cD_{LT}\). We remark that the proofs mirror those in the compact case, with the sole additional difficulty that the set of singularities for an ALG is in principle much bigger, since it is composed by infinitely many curves. These difficulties are handled in \cite[Lemma 3.2]{MR1992667}, where it was noticed that, although the singularity curves are infinitely many, at every iteration of the map the particle `interacts' with only finitely many of them). As we discuss below, the same arguments prove the measurability of the partitions. 

We outline here a few important details of the construction of LSUMs with the required distortion properties. The standard way to proceed, which altogether yields absolute continuity, measurability and \eqref{eq:equivalent-condiditonal-measure} for the conditional measures, is to introduce \textit{homogeneity strips},
\[
\begin{split}
        \bH_{k} = \bH_k^{+} \cup \bH_k^{-} &= \{(r, \vf) \,:\, \pi/2 -k^{-2} < \vf < \pi/2 -(k+1)^{-2}\}\\
        & \hspace{2cm}\cup \{(r, \vf) \,:\, -\pi/2 + (k+1)^{-2} < \vf < -\pi/2 +k^{-2}\},\\
        \bH_0 &= \{(r,\vf)\,:\, -\pi/2 + k_0^2 < \vf < \pi/2 - k_0^2\}.
    \end{split}
\]
for \(k \ge k_0\) and \(k_0\) big enough. The union \(\mathbb S\) of all the boundaries of these strips define an `extended singularity set' of \(\cF^n\) as follows:
\[
\cS_n^{\bH} = \cS_n \cup \bigcup_{m=0}^n \cF^{-m}(\mathbb S), \qquad \cS_{-n}^{\bH} = \cS_{-n} \cup \bigcup_{m=0}^n \cF^m (\mathbb S).
\]
Recall \eqref{eq:eq-future-sing} for \(\cS_{\pm n}\). Let \(\mathcal Q_{-n}^{\bH}(x)\) be the connected component of \(\cM \setminus \cS_{-n}^{\bH}\) containing \(x\). These are countable approximations of unstable manifolds and \(\cap_{n=1}^{M}\overline{\mathcal Q^{\bH}_{-n}(x)}\) converges as \(M\to \infty\) to the (closure of the) maximal \emph{unstable H-manifold} \(W^u_{\bH}(x)\) passing through \(x\). Stable H-manifolds are defined in a similar way. The partitions into stable and unstable H-manifolds corresponds to the partitions \(\xi^s\) and \(\xi^u\) of the previous section, with \(W_{\bH}^{s/u}(x)\) playing the role of \(W^{s/u}(x)\). For the existence of \(W^u_{\bH}\) and \(W^s_{\bH}\) almost everywhere, the key observation is that the orbit of a.e.\ point does not approach the extended singularity set exponentially fast. We refer to \cite[Theorem 5.17]{MR2229799} for the proof in the compact case and \cite{MR1992667} for ALGs. The first property in \eqref{eq:non-unif-hyp} is a consequence of the fact that
\begin{equation}\label{eq:partition-preimages}
\begin{split}
\cF^{-1}(\overline{W^u_{\bH}(\cF(x))}) = \cF^{-1}\left(\bigcap_{n=1}^{\infty}\overline{\mathcal Q^{\bH}_{-n}(\cF(x))}\right) &= \cF^{-1}\left(\overline{\mathcal Q^{\bH}_{-1}(\cF(x))}\right) \cap \bigcap_{n=1}^{\infty}\overline{\mathcal Q^{\bH}_{-n}(x)} \\
&\subseteq \bigcap_{n=1}^{\infty}\overline{\mathcal Q^{\bH}_{-n}(x)} = \overline{W^u_{\bH}(x)}.
\end{split}
\end{equation}
The second property of \eqref{eq:non-unif-hyp} follows from uniform hyperbolicity of the ALGs in \(\cK\). In particular, \cite[Equation (4.19)]{MR2229799} reads, for \(n \ge 1\) and vectors \(v\) in the unstable cone\footnote{The differential of the billiard map preserves a cone field \cite[Section 4.4]{MR2229799} of \emph{unstable vectors}. These vectors expand uniformly under the derivative cocycle associated to \(\cF\).},
\begin{equation}\label{eq:unstablevectors-grow}
\frac{\|D\cF^n v\|}{\|v\|} \ge \hat c \Lambda^n, \qquad \Lambda = 1 + 2\tau_{m}k_m > 1.
\end{equation}
Since tangent lines to LUMs are aligned with the unstable cone field, \eqref{eq:unstablevectors-grow} and its time reversal prove the second line of \eqref{eq:non-unif-hyp}. For later reference, we also notice that the the constant \(\hat c > 0\) in \eqref{eq:unstablevectors-grow} is uniform\footnote{\(\hat c\) depends on the slopes of the unstable cone field which are uniformly bounded from above and below throughout the phase space.} in \(x \in \cM\). The fact that \(|W_{\bH}^{s/u}(x)|\) is bounded uniformly in \(x\) is a consequence of the fact that the stable and unstable manifolds do not exceed the phase space \(\cM_i\) associated to a single scatterer. These sets are bounded uniformly in the index \(i \in \cI\) because of (ALG2). Finally, (H3) and \eqref{eq:equivalent-condiditonal-measure} are established following \cite[Theorem 5.29]{MR2229799} (see also \cite[Theorem 5.2]{MR2229799}) and (H6) together with \eqref{eq:bound-holonomy} is \cite[Theorem 5.42]{MR2229799}. These results use the fact that the points in \(W_{\bH}^u\) (resp. \(W_{\bH}^s\)) have past (resp. future) trajectories that stay at a controlled distance w.r.t.\ the singularities\footnote{Precisely, by construction, for all \(n\in \bN\), \(\cF^{-n}(W_{\bH}^u)\) is contained in some homoegneity strip.} and are entirely local.\\\\
\textbf{(H5)}: This is established in \cite[Lemma 5.27]{MR2229799}, and a direct analysis of the proof—with no additional considerations beyond those already mentioned—shows that the argument does not distinguish between an ALG and a Sinai billiard with no corners and bounded horizon on the two-dimensional torus.\\\\
\textbf{(H4)}: The fact that a.e.~point \(x \in \cM\) belongs to a Cantor rectangle is established in \cite[Proposition 7.81]{MR2229799} for the compact case. Since the proof is entirely local, it carries over to our setting without modification. However, because this is the first instance in which the condition (H4) is used in the context of ALGs, we briefly sketch the main steps in the proof of \cite[Proposition 7.81]{MR2229799}.
The first technical, yet far-reaching, ingredient is the so-called \textit{Growth Lemma}, established in \cite[Theorem 5.52]{MR2229799}. 

\smallskip\noindent
\textbf{Growth Lemma:} There are constants \(\hat \Lambda >1\), \(\vartheta_1 \in (0,1)\) and \(c_1, c_2 >0\), such that for all \(n\ge 0\) and \(\ve >0\), for any unstable H-manifold \(W_{\bH}^u\),
    \begin{equation}\label{eq:growth-lemma}
         \bold m_{W_{\bH}^u} (r_n (x) \le \ve) \le c_1 (\vartheta_1 \hat \Lambda)^n \bold m_{W_{\bH}^u} (r_0(x) \le \ve/\hat \Lambda^n) + c_2 \ve \bold m_{W_{\bH}^u} (W_{\bH}^u).
        \end{equation}
Here \(r_n(x)\) is the distance between \(\cF^n (x)\) and the endpoints of the unstable H-manifold \(W_n \subseteq \cF^n(W_{\bH}^u)\) such that \(\cF^n(x) \in W_n\). The Growth Lemma gives a bound on how fragmented the forward images of a LUM\footnote{Or, more generally, of any unstable curve, which are curves aligned with the unstable cone field.} may possibly be: the images of a short LUM constitute a family of curves whose average size increases up to a certain threshold. The proof of the Growth Lemma for billiards with no corners is obtained via the so-called one-step expansion (see \cite[Lemma 5.56]{MR2229799}). This expresses the fact that the expansion beats the fragmentation caused by the extended singularity set in just one iteration of the map. The proof of \cite[Lemma 5.56]{MR2229799} applies to ALGs without modifications due to our uniformity assumptions (in particular, note that the ratio \(\tau_M/\tau_m\) is bounded due to (ALG2)). As we will explain shortly, once the Growth Lemma has been established, it can be shown that every unstable H-manifold intersects plenty of stable H-manifolds, with a certain control on their size. In particular, \cite[Theorem 5.67]{MR2229799} asserts the following\footnote{Again, the general statement has any unstable curve in place of \(W^u_{\bH}\).}: for any unstable H-manifold \(W_{\bH}^u \subset \cM\) and \(\bold m_{W^u_{\bH}}\)-almost every \(x \in W_{\bH}^u\), the stable manifold \(W_{\bH}^s(x)\) is non-empty. Furthermore, for some \(C>0\) depending only on the table, and any \(\ve >0\), we have
\begin{equation}\label{eq:long-stable-intersecting-unstable}
\bold m_{W^u_{\bH}} (\{x \in W^u_{\bH} \,:\, r_{\bH}^s (x) < \ve\})\le C\ve.
\end{equation}
Here, \(r_{\bH}^s(x)\) denotes the distance, measured along \(W_{\bH}^s(x)\), between \(x\) and the nearest endpoint of \(W_{\bH}^s(x)\). The proof relies on the Growth Lemma and it is based on the fact that a stable manifold \(W_{\bH}^s(x)\) is short only when the forward trajectory of \(x\) lands close to the enlarged discontinuity set \(\cS_{-1}^{\mathbb H}\) (see \cite[Equation (5.58)]{MR2229799}). This in turn implies that the forward iterate of \(x\) lands close to one of the endpoints of some homegeneous component of \(\cF^n(W)\) \cite[Exercise 5.69]{MR2229799}. Putting everything together, one obtains that, for some \(C>0\), \(\Lambda <1\),
\[
\bold m_{W^u_{\bH}}(\{x \in W^u_{\bH} \,:\, r^s(x) < \ve\}) \le \sum_{n=0}^{\infty} \bold m_{W^u_{\bH}}(\{r_n(x) \le C \Lambda^{-n}\ve\}),
\]
see the equation after \cite[Equation (5.58)]{MR2229799}. This, together with \eqref{eq:growth-lemma}, proves \eqref{eq:long-stable-intersecting-unstable}. The estimate \eqref{eq:long-stable-intersecting-unstable} is sufficient to prove the existence of Cantor rectangles at a.e.\ point. We give some details. We call \textit{solid rectangle} any closed region \(\fO\subset \cM\) bounded by two unstable and two stable H-manifolds. Each such manifold intersects the other two of the opposite type in just one point and constitute the boundary \(\partial \fO\). We say that a stable (resp.\ unstable) H-manifold \(W\) \textit{fully crosses} \(\fO\) if \(W\) intersects \(\fO\) and both the unstable (resp.\ stable) manifolds defining \(\partial \fO\). In correspondence with \(\fO\), there is another set \(\cantor = \cantor (\fO)\), consisting of all those points \(x \in \fO\) such that \textit{both} \(W_{\bH}^s(x)\) and \(W_{\bH}^u(x)\) fully cross \(\fO\) (see Figure~\ref{fig:solid-rectangle}). It is easy to check that any set \(\cantor (\fO)\) corresponding to a solid rectangle \(\fO\) is a Cantor rectangle in the sense of (H4). Since by construction the two stable opposite H-manifolds in \(\partial \fO\) have a positive distance, we have that the lengths of \(W^u_{\bH}(x)\) for \(x \in \cantor(\fO)\) are bounded from below by some constant \(c = c(\cantor(\fO))>0\), in agreement with (H4). To conclude the argument, the estimate \eqref{eq:long-stable-intersecting-unstable} implies the following fact\footnote{To see how, one takes a small piece \(W\) of unstable manifold and applies \eqref{eq:long-stable-intersecting-unstable} to \(\cF^n (W)\). This shows that the long curve \(\cF^n(W)\) posses plenty of small LSMs. Pulling everything back, one obtains a small piece of LUM intersecting many long LSMs.}: for any short enough piece \(W\) of a unstable H-manifold, a proportion, as close to one as desired, of points of \(W\) supports a stable H-manifold longer than, say, \(2|W|\); see \cite[Theorem 5.70]{MR2229799} for the precise statement. We could rephrase this by saying that, at the small scale \(W\), most stable manifolds looks very long (and similarly for unstable manifolds). As noticed in \cite[Proposition 7.81]{MR2229799}, this is enough to show that any point possessing a stable and unstable H-manifold is contained in a sufficiently small Cantor rectangle \(\cantor(\fO)\). Finally, since each H-manifold at \(x\) is contained in \(\cM_{i}\ni x\), we have that each \(\underline \cantor\) is contained in some \(\cM_{i}\), so that \(\mu(\underline \cantor) < \infty\). This concludes the proof of (H4).

Therefore, the main Theorem \ref{thm:strth} applies to each ALG in \(\cK\). By Lemma \ref{lem:very-standard} and Corollary \ref{corlStr}, we conclude the proof of the Theorem.
\qed

\begin{figure}[ht]
\centering

\begin{tikzpicture}

\begin{scope}[rotate=45, scale = 0.8]

\draw[blue!70!black]
  (-0.5,0.3) .. controls (2,0.9) and (4,-0.3) .. (6.5,0.5)
  node[right, font=\small] {$W^{u}_1$};

\draw[blue!70!black]
  (0.5,2.3) .. controls (1.8,2.5) and (2.4,1.85) .. (3.5,2.0)
  .. controls (4.4,2.1) and (4.9,1.6) .. (5.6,1.65)
  node[right, xshift=-3pt, yshift=10pt, font=\small] {$W^{u}_{\bH}(x)$};

\draw[blue!70!black]
  (-0.5,3.7) .. controls (2,3.1) and (4,4.3) .. (6.5,3.5)
  node[right, font=\small] {$W^{u}_2$};

\draw[red!70!black]
  (1,-0.5) .. controls (0.2,1.0) and (1.9,3.0) .. (0.9,4.3)
  node[above, font=\small] {$W^{s}_1$};

\draw[red!70!black]
  (5,-0.5) .. controls (5.6,1.5) and (4.4,2.5) .. (5.2,4.3)
  node[above, font=\small] {$W^{s}_2$};

\draw[red!70!black]
  (3,-0.5) .. controls (2.7,1.2) and (3.4,2.7) .. (3.1,4.3)
  node[above, font=\small] {$W^{s}_{\bH}(x)$};

\filldraw[black] (3.08,2.00) circle (1.5pt)
  node[left = 3pt, font=\small] {$x$};

\end{scope}

\end{tikzpicture}

\caption{A solid rectangle $\fO$ bounded by $W^{u}_{1}, W^{u}_{2}, W^{s}_{1}$ and $W^{s}_{2}$. The point $x$ belongs to the set $\cantor(\fO)$, as its associated H-manifolds $W^{u}_{\bH}(x)$ and $W^{s}_{\bH}(x)$ fully cross $\fO$.}

\label{fig:solid-rectangle}

\end{figure}

This concludes our work on the main theorem on the ergodic properties of ALGs. As a further application of Theorem \ref{thm:K-dec-billiards} along the line of stochastic properties, we have the following result. For any decreasing positive sequence \(a = \{a_p\}_{p \in \bN}\) such that \(\lim_{p \to \infty}a_p = 0\), we define
\[
\cH_{a} = \{F \,:\, \cM \to \bR \text{ such that } |F(x) - F(y)| \le a_p \text{ for all } |x-y| \le 2^{-p}\}.
\]
We have the following uniform version of \eqref{eq:coalescence}.
\begin{theorem}\label{thm:decay-of-correlations}
For any ALG in \(\cK\), any decreasing positive sequence \(a = \{a_p\}_{p \in \bN}\) and any \(g \in L^1(\mu)\) with \(\int g \,d\mu = 0\), we have
\[
\lim_{n \to \infty} \sup_{\substack{F \in \cH_{a}\\ \|F\|_{\infty} \le 1 }} \int_{\cM} (F \circ \cF^n) g \, d\mu  = 0.
\]      
\end{theorem}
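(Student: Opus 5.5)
The plan is to reduce the statement to the $L^1$ decay of conditional expectations of $g$ on the sets of the $\sigma$-algebras $\cF^{-m}\fS$, which is where K-mixing enters, and then to handle the supremum over $F$ by approximating every $F\in\cH_a$ uniformly by $\cF^p\fS$-measurable functions. Here $\fS$ is the stable $\sigma$-algebra \eqref{eq:stable-sigma}. By Theorem \ref{thm:K-dec-billiards} together with Lemma \ref{lem:final}, Lemma \ref{lem:very-standard} and Corollary \ref{corlStr}, the tail $\fT=\bigcap_k\cF^{-k}\fS$ is trivial.

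\emph{Step 1 (uniform approximation).} Fix $p\in\bN$. For a.e.\ $x$, consider the curve $V_p(x)=\cF^p(W^s(\cF^{-p}(x)))\ni x$. These curves form the measurable partition $\cF^p\xi^s$, which generates $\cF^p\fS$. By the time reversal of \eqref{eq:unstablevectors-grow}, with the constant $\hat c$ uniform in $x$, and by the uniform bound on $|W^s|$, we get $|V_p(x)|\le C_0\Lambda^{-p}$ for some $C_0$ independent of $x$ and $p$. Let $q(p)$ be the largest integer with $2^{-q(p)}\ge C_0\Lambda^{-p}$, so that $q(p)\to\infty$. Define $F_p$ on each $V_p(x)$ as the average of $F$ with respect to the conditional measure of $\mu$ on $V_p(x)$ given by (H3). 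Then $F_p$ is $\cF^p\fS$-measurable, $\|F_p\|_\infty\le1$, and, uniformly over $F\in\cH_a$ with $\|F\|_\infty\le 1$,
\[
\|F-F_p\|_\infty\le a_{q(p)}.
\]

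\emph{Step 2 (splitting).} For $n\ge p$ write
\[
\int (F\circ\cF^n)g\,d\mu=\int (F_p\circ\cF^n)g\,d\mu+\int \bigl((F-F_p)\circ\cF^n\bigr)g\,d\mu .
\]
The second term is bounded by $a_{q(p)}\|g\|_1$. For the first term, $F_p\circ\cF^n$ is $\cF^{-n}\cF^{p}\fS=\cF^{p-n}\fS$-measurable. Conditioning on this $\sigma$-finite $\sigma$-algebra (each set $\cF^{p-n}(\underline\cantor_k)$ has finite measure, so conditional expectation is well defined on $L^1$) gives
\[
\Bigl|\int (F_p\circ\cF^n)g\,d\mu\Bigr|\le \bigl\|\mathbb E(g\,|\,\cF^{p-n}\fS)\bigr\|_{1}.
\]
This bound is independent of $F$. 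Hence
\[
\sup_F\Bigl|\int (F\circ\cF^n)g\,d\mu\Bigr|\le a_{q(p)}\|g\|_1+\bigl\|\mathbb E(g\,|\,\cF^{-(n-p)}\fS)\bigr\|_1 .
\]
Given $\ve>0$, I first choose $p$ so that $a_{q(p)}\|g\|_1<\ve$ and then let $n\to\infty$.

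\emph{Step 3 (main obstacle).} It remains to show that $\|\mathbb E(g|\cF^{-m}\fS)\|_1\to0$ as $m\to\infty$ for every $g\in L^1(\mu)$ with $\int g\,d\mu=0$. In infinite measure this does not follow from reverse martingale convergence alone. For $g\ge0$ the norm $\|\mathbb E(g|\cF^{-m}\fS)\|_1=\|g\|_1$ is constant in $m$, so the decay must come from cancellation between $g^+$ and $g^-$. This is precisely the infinite-measure characterization of the K-property (Lin's criterion; cf.\ \cite[Theorem 3.5]{MR3235352} and Corollary \ref{cor:decay-K-mix}). I would invoke it directly, using that $\fT$ is trivial.

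If a self-contained argument is needed, I would proceed as follows.
\begin{itemize}
\item First, approximate $g$ in $L^1$ by zero-mean functions of the form $\mathbf 1_{B_1}/\mu(B_1)-\mathbf 1_{B_2}/\mu(B_2)$ times constants, with $B_1,B_2$ of finite measure.
\item Second, use ergodicity (Lemma \ref{lem:very-standard}) to connect $B_1$ and $B_2$ through finitely many enlarged Cantor rectangles.
\item Third, on each finite-measure piece $\underline\cantor_k$, use reverse martingale convergence toward the trivial tail.
\end{itemize}
Controlling the mass that the conditional expectations spread outside these finite-measure pieces is where I expect the real difficulty to lie.
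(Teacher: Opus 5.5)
Your proposal is correct and follows essentially the same route as the paper: you approximate $F\in\cH_a$ uniformly by its conditional averages over the curves $\cF^p(W^s(\cF^{-p}(x)))$ using the uniform contraction \eqref{eq:unif-hyp-sinai}, and then you apply Lin's criterion to the trivial tail of $\fS$, which the paper packages as Lemma \ref{lem:K-1} (your bound by $\|\mathbb E(g\,|\,\cF^{p-n}\fS)\|_1$ is exactly the supremum controlled there). The optional self-contained argument in Step 3 is not needed: invoking Lin's theorem for the transfer operator of $\cF$ on $(\cM,\fS,\mu)$, as the paper does, already closes the proof.
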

\proof
Recall that, in the case of ALGs, \(\fS\) is the $\sigma$-algebra generated by sets that are mod \(0\) unions of elements of the partition \(\xi^s\) of stable H-manifolds. We also set \(\fS_m = \cF^m \fS\). Fix some sequence \(a = \{a_p\}_{p \in \bN}\) as above. For any \(x\in \cM\), \(F \in \cH_a\) and \(m \in \bN\), set
\begin{equation}\label{eq:def-Fm}
F_m (x) = \int_{\cF^m(W_{\bH}^s(\cF^{-m}(x)))} F(y) \, d\nu_{\cF^m(W_{\bH}^s(\cF^{-m}(x)))}(y).
\end{equation}
Here, \(\nu_{\cF^m(W_{\bH}^s(\cF^{-m}(x)))}\) are the conditional probability measure associated to the measurable partition \(\cF^m \xi^s\). \(F_m\) is \(\fS_m\)-measurable since it is constant on the elements of \(\cF^m \xi^s\). By the uniform hyperbolicity of the ALGs in \(\cK\) (see \eqref{eq:unstablevectors-grow} and the discussion afterward), there exist \(C>0\), \(\Lambda>1\) such that, for any \(m \in \bN\), 
\begin{equation}\label{eq:unif-hyp-sinai}
\sup_{x \in \cM} | \cF^m(W_{\bH}^s(\cF^{-m}(x)))| \le C \Lambda^{-m}.
\end{equation}
Fix \(\ve >0\) and set
\begin{equation}\label{eq:nstar}
 m_{*} = \min \{m \,:\, C \Lambda^{-m}< 2^{-\min \{p \,:\, a_p < \ve\}}\}.
\end{equation}
Notice that \eqref{eq:unif-hyp-sinai} and \eqref{eq:nstar} imply that
\begin{equation}\label{eq:sup-mstar}
\sup_{x \in \cM} | \cF^{m_*}(W_{\bH}^s(\cF^{-m_*}(x)))| \le C \Lambda^{-m_*} < 2^{-\min \{p \,:\, a_p < \ve\}}.
\end{equation}
Using that \(\nu_{\cF^{m_*}(W_{\bH}^s(\cF^{-m_*}(x)))}\) are probability measures, for a.e.\ \(x \in \cM\),
\begin{equation}\label{eq:approx}
\begin{split}
|F_{m_*} (x) - F(x)| &= \biggl|\int_{\cF^{m_*}(W_{\bH}^s(\cF^{-m_*}(x)))} \bigl(F(y) - F(x)\bigr) \, d\nu_{\cF^{m_*}(W_{\bH}^s(\cF^{-m_*}(x)))}(y)\biggr| \\
&\le \int_{\cF^{m_*}(W_{\bH}^s(\cF^{-m_*}(x)))} \bigl|F(y) - F(x)\bigr|\, d\nu_{\cF^{m_*}(W_{\bH}^s(\cF^{-m_*}(x)))}(y) \le \ve.
\end{split}
\end{equation}
In the last estimate, we have used \eqref{eq:sup-mstar} and that \(F \in \cH_a\). We will need \eqref{eq:approx} shortly. Recall that \(L^{\infty}(\fS_{m_*})\) are the \(\fS_{m_*}\)-measurable, essentially bounded functions. By Theorem \ref{thm:K-dec-billiards}, \(\cF\) is K-mixing (w.r.t.\ the $\sigma$-algebra \(\fS\)) and we can apply Lemma \ref{lem:K-1} with \(\fB_m = \fS_m\), \(m=\mu\) and $g$ as in the statement of Theorem \ref{thm:decay-of-correlations}. Thus, there exists \(\overline n\in \bN\) so large that, for each \(n \ge \overline n\),
\begin{equation}\label{eq:K-mixx}
\left|\sup_{\substack{G\in L^{\infty}(\fS_{m_*})\\ \|G\|_{\infty} \le 1}} \int_{\cM} (G \circ \cF^n) g \, d\mu \right| \le \ve.
\end{equation}
Recall that, for each \(F \in \cH_a\), \eqref{eq:def-Fm} defines a function \(F_{m_*} \in L^{\infty}(\fS_{m_*})\). By \eqref{eq:approx} and \eqref{eq:K-mixx} and summing and subtracting \(F_{m_*}\circ \cF^n\), for any \(n \ge \overline n\),
\[
\begin{split}
\left| \sup_{\substack{F \in \cH_{a} \\ \|F\|_{\infty} \le 1}} \int_{\cM} (F \circ \cF^n) g \, d\mu \right| & \le \left|\sup_{\substack{F_{m_*} \in L^{\infty}(\fS_{m_*})\\ \|F_{m_*}\|_{\infty} \le 1} }\int_{\cM} (F_{m_*} \circ \cF^n) g \, d\mu \right| +\!\!\! \sup_{\substack{F \in \cH_{a} \\ \|F\|_{\infty} \le 1}}\!\!\!\|F - F_{m_*}\|_{\infty} \, \|g\|_{L^1} \\[4pt]
&\le \ve + \|g\|_{L^1}\ve.
\end{split}
\]
This concludes the proof of the theorem.
\qed

Observe that the set of all uniformly continuous functions is equal to \(\cup \cH_{a}\) where the union ranges over all possible sequences \(a\) as above. In particular, Theorem \ref{thm:decay-of-correlations} holds for any uniformly continuous observable \(F\), but we have also shown that the decay rate is uniform among any class of observables with a certain regularity (e.g., Lipschitz, Hölder etc.\ with uniform constants). 

We conclude the paper with a remark about the range of applicability of Theorems \ref{thm:K-dec-billiards} and \ref{thm:decay-of-correlations}.
\begin{remark}\label{rem:recurrent-billiards}
Of all the assumption of Theorems \ref{thm:strth} and \ref{thm:K-dec-billiards}, the most difficult to verify is certainly recurrence. Proving recurrence for aperiodic billiards remains a largely open problem. Nevertheless, the results available in the literature provide plenty of applications for Theorem \ref{thm:K-dec-billiards}. In \cite{MR2237472}, it was shown that recurrent ALGs on the plane are typical with respect to a suitable topology. This result was later extended to the case of infinite but `locally finite' horizon configurations \cite{MR2720043}. Furthermore, \cite{MR1992667} established recurrence for \textit{finite modifications} of planar PLGs. These are ALGs obtained by altering the shape and arrangement of a finite number of scatterers of a PLG. The situation is better for billiards for Lorentz gases in strips (a.k.a.\ \emph{Lorentz tubes}). In \cite{MR2741899}, the authors considered an ensemble (i.e., a measured family) of Lorentz tubes with uniform geometric features, together with several generalizations, and proved that a.e.\ realization of the table is recurrent.
\end{remark}

\appendix
\section{Appendix: A few lemmata from measure theory}
Here we collect few general results that are used in Section \ref{sec:proof}. In this section \((X, \fF, m)\) is a general measure space (with \(m(X) < \infty\) or \(m(X) = \infty\)) and \(T\) is a bi-measurable, invertible and non-singular map on \((X, \fF, m)\) (i.e., the measures \(m \circ T^{-1}\) and \(m\) are equivalent). 
\begin{lemma}\label{lem:auto-int-abstract}
For any \(P \in \fF\) such that \(m(T(P) \cap P) >0\) there exists \(\ve >0\) such that for all \(Q\in \fF\) such that \(m (P \setminus Q)\le \ve\) we have
\[
m(T(Q)\cap Q) >0.
\]
\end{lemma}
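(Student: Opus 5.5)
The plan is a direct measure-theoretic argument. We start from a piece of $T(P)\cap P$ of finite positive measure. Then we show that removing a small-measure subset $E=P\setminus Q$ from $P$ can destroy only a small portion of it, both directly and through its image $T(E)$. The only non-trivial input is the absolute continuity of $m\circ T^{-1}$ (more precisely of $E\mapsto m(T(E))$) with respect to $m$, turned into a uniform $\ve$–$\delta$ statement.

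\emph{Step 1: reduce to finite measure.} Set $R=T(P)\cap P$, so $m(R)>0$. Since $m$ may be infinite, I first choose $R'\subseteq R$ with $0<m(R')<\infty$. This uses $\sigma$-finiteness of $m$, which holds in the setting where the lemma is applied; $\mu$ is locally finite on a manifold. Define the finite measure
\[
\lambda(E)=m\bigl(T(E)\cap R'\bigr), \qquad E\in\fF .
\]
Because $T$ is non-singular, $m(E)=0$ implies $m(T(E))=0$, so $\lambda\ll m$.

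\emph{Step 2: uniform absolute continuity.} Since $\lambda$ is a \emph{finite} measure absolutely continuous w.r.t.\ $m$, the standard $\ve$–$\delta$ characterization applies. It holds even when $m$ is infinite and is proved by the usual Borel–Cantelli contradiction argument. So there is $\eta>0$ such that $m(E)\le\eta$ implies $\lambda(E)<m(R')/3$. Set $\ve=\min\{\eta,\,m(R')/3\}$.

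\emph{Step 3: the inclusion and conclusion.} Let $Q\in\fF$ with $m(P\setminus Q)\le\ve$, and put $E=P\setminus Q$. Take $x\in R'$ with $x\notin E$ and $x\notin T(E)$. Since $x\in P\setminus E$, we get $x\in Q$. Since $x\in T(P)\setminus T(E)=T(P\setminus E)$, we get $x\in T(Q)$; this uses invertibility of $T$. Hence
\[
R'\setminus\bigl(E\cup (T(E)\cap R')\bigr)\subseteq T(Q)\cap Q,
\]
and therefore
\[
m\bigl(T(Q)\cap Q\bigr)\ \ge\ m(R')-m(E)-\lambda(E)\ >\ m(R')-\tfrac{m(R')}{3}-\tfrac{m(R')}{3}\ >0 .
\]

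The only step needing care is Step 2: obtaining a uniform bound on $m(T(E))$ from smallness of $m(E)$ when $m$ is infinite. I handle it by intersecting with the finite-measure set $R'$, which makes $\lambda$ finite so the classical lemma applies.
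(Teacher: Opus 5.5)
Your proof is correct, and its core is the same set inclusion the paper uses, namely $T(P)\cap P\subseteq (T(Q)\cap Q)\cup(P\setminus Q)\cup T(P\setminus Q)$. The analytic step is organized differently.

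The paper argues by contradiction. If no $\ve$ worked, it picks $Q_n$ with $m(P\setminus Q_n)\le 1/n$ and $m(T(Q_n)\cap Q_n)\le 1/n$, and passes to $n_k=2^k$. It then applies Borel--Cantelli to $P\setminus Q_{n_k}$ and to $T(Q_{n_k})\cap Q_{n_k}$. Non-singularity is used only once, on the single null set $\limsup_k (P\setminus Q_{n_k})$, to kill $\limsup_k T(P\setminus Q_{n_k})$. The inclusion then forces $m(T(P)\cap P)=0$. Since the smallness of $m(T(Q_n)\cap Q_n)$ comes for free from the contradiction hypothesis, continuity from above is never needed. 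Hence the argument works verbatim for an arbitrary measure $m$, as the appendix's ``general measure space'' allows.

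You instead argue directly, packaging Borel--Cantelli into the $\ve$--$\delta$ form of absolute continuity of $\lambda(E)=m(T(E)\cap R')$. This gives you an explicit $\ve$ and an explicit lower bound $m(T(Q)\cap Q)>m(R')/3$. The price is that this $\ve$--$\delta$ lemma relies on continuity from above, hence on finiteness of $\lambda$. That is why you must localize to $R'$ and assume $m$ is semi-finite (e.g.\ $\sigma$-finite). Strictly speaking, you therefore prove the lemma under a hypothesis not in its statement.

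This is harmless for the lemma's only use, in Lemma~\ref{lem:tail-ergodic-atomic}. There $P=\cantor_k\subseteq\underline\cantor_k$ already has finite measure by (H4), so you may simply take $R'=T(P)\cap P$.
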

\proof
For \(n \in \bN\), we set
\[
\ell_n =  \inf\bigg\{ m(T(Q)\cap Q) \,:\, Q \in \fF, \quad m(P\setminus Q) \le \frac 1 n \bigg\},
\]
and we notice that \(\ell_n\) is a non-decreasing non-negative sequence. Call \(\ell = \lim_{n \to \infty} \ell _n \in [0, \infty]\). We first show that \(\ell\) is positive. Assume by contradiction that \(\ell = 0\). In this case \(\ell_n = 0\) for all \(n\) and, by definition of infimum, for every \(n\) there exists \(Q_n \in \fF\) such that
\[
m(P\setminus Q_n) \le \frac 1 n, \qquad m\big(T(Q_n)\cap Q_n\big) \le \ell_n + \frac 1 n = \frac 1 n .
\]
By passing to the subsequence \(n_{k} = 2^k\), we have
\[
\sum_{k=1}^{\infty} m(P\setminus Q_{n_k}) \le \sum_{k=1}^{\infty} 2^{-k} < \infty,
\qquad
\sum_{k=1}^{\infty} m\big(T(Q_{n_k})\cap Q_{n_k}\big) \le\sum_{k=1}^{\infty}  2^{-k}  < \infty.
\]
Hence, by Borel-Cantelli,
\begin{equation}\label{eq:borel-cantelli}
m\left (\bigcap_{m = 1}^{\infty} \bigcup_{k\ge m} P \setminus Q_{n_k}  \right) = 0, \qquad m\left ( \bigcap_{m = 1}^{\infty}\bigcup_{k \ge m} T(Q_{n_k}) \cap Q_{n_{k}}  \right) = 0.
\end{equation}
Since \(T\) is invertible, \(\bigcap_{m = 1}^{\infty} \bigcup_{k\ge m} T(P \setminus Q_{n_k}) = T\big(\bigcap_{m = 1}^{\infty} \bigcup_{k\ge m} P \setminus Q_{n_k} \big)  \) and since \(T\) is non singular, by the first of \eqref{eq:borel-cantelli},
\begin{equation}\label{eq:borel-cantelli-2}
m\left( \bigcap_{m =1}^{\infty} \bigcup_{k\ge m} T(P \setminus Q_{n_k})\right) = m \left( T\bigg(\bigcap_{m = 1}^{\infty} \bigcup_{k\ge m} P \setminus Q_{n_k} \bigg)  \right) = 0.
\end{equation}
We will use \eqref{eq:borel-cantelli} and \eqref{eq:borel-cantelli-2} in a moment. Since \(P \subseteq (P\setminus Q_{n_k}) \cup Q_{n_k}\) and \(T(P) \subseteq T(P\setminus Q_{n_k})) \cup T(Q_{n_k})\), we have, for all \(k \ge 1\),
\[
T(P)\cap P  \subseteq (T(Q_{n_k}) \cap Q_{n_{k}}) \cup (P\setminus Q_{n_k}) \cup T(P\setminus Q_{n_k}).
\]
Consequently, for any \(m\ge 1\),
\[
T(P)\cap P \subseteq \bigcup_{k \ge m} (T(Q_{n_k}) \cap Q_{n_{k}} )\cup (P\setminus Q_{n_k}) \cup T(P\setminus Q_{n_k}),
\]
and intersecting over all \(m\),
\[
\begin{split}
T(P)&\cap P  \subseteq \bigcap_{m=1} ^{\infty}\bigcup_{k \ge m} (T(Q_{n_k}) \cap Q_{n_{k}}) \cup (P\setminus Q_{n_k}) \cup T(P\setminus Q_{n_k})\\
& = \left( \bigcap_{m=1} ^{\infty}\bigcup_{k \ge m} T(Q_{n_k}) \cap Q_{n_{k}}\right) \cup \left( \bigcap_{m=1} ^{\infty}\bigcup_{k \ge m} (P\setminus Q_{n_k}) \right) \cup \left( \bigcap_{m=1} ^{\infty}\bigcup_{k \ge m} T(P\setminus Q_{n_k})\right).
\end{split}
\]
The equation above, \eqref{eq:borel-cantelli} and \eqref{eq:borel-cantelli-2} imply that \(m(T(P)\cap P) = 0\), which is a contradiction. This proves that \(\ell > 0\) and so \(\ell_n >0\) for some \(n\). The last assertion implies the statement with \(\ve = 1/n\).
\qed

Recall that a $\sigma$-algebra \(\mathfrak H \subseteq \fF\) is \(T^d\)-invariant if \(T^{-d}(A) \in \mathfrak H\) for any \(A \in \mathfrak H\). The next lemma is in the spirit of \cite[Lemma 2.1]{MR3009111} and \cite[Proposition A.2]{MR3433572}, both based on the results in \cite{MR143873}.
\begin{lemma}\label{lem:criterion}
Let \(\mathfrak H \subseteq \fF\) be a \(T^{d}\)-invariant \(\sigma\)-algebra  and assume that, for any \(P \in \mathfrak H\) with \(m(P) >0\) we have \(m(T^{d}(P) \cap P)>0\) (we say that \(\mathfrak H\) is \(d\)-auto-intersecting). Then \(\mathfrak H \subseteq \fI_{d}\). 
\end{lemma}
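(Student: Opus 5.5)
Fix $A\in\mathfrak H$; we must show $T^{-d}(A)=A$ mod $0$. Since $\mathfrak H$ is $T^d$-invariant, $T^{-d}(A)\in\mathfrak H$, so both differences
\[
P_1 = A\setminus T^{-d}(A), \qquad P_2 = T^{-d}(A)\setminus A
\]
belong to $\mathfrak H$. The plan is to show that $P_1$ and $P_2$ are $d$-wandering-type sets, i.e.\ $m(T^d(P_i)\cap P_i)=0$, and then let $d$-auto-intersection force $m(P_i)=0$. I will work with $S=T^{-d}$ wherever possible, since invariance of $\mathfrak H$ is phrased through preimages.

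For $P_1$: if $x\in P_1\cap T^d(P_1)$, then $x\in A$ and $x=T^d(y)$ with $y\in P_1\subseteq A$. Hence $T^{-d}(x)=y\in A$, i.e.\ $x\in T^{-d}(A)$, contradicting $x\notin T^{-d}(A)$. So $P_1\cap T^d(P_1)=\emptyset$. Since $P_1\in\mathfrak H$, the $d$-auto-intersecting hypothesis gives $m(P_1)=0$.

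For $P_2$, the analogous computation goes as follows. If $x\in P_2\cap T^d(P_2)$, then $x=T^d(y)$ with $y\in P_2$, so $y\notin A$; but $x\in P_2$ gives $T^{-d}(x)=y\in A$, a contradiction. Thus $m(P_2)=0$ as well, and $A=T^{-d}(A)$ mod $0$, i.e.\ $A\in\fI_d$.

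The main obstacle is only bookkeeping. First, one must make sure the hypothesis is applied to sets genuinely in $\mathfrak H$, which is where $T^d$-invariance is used; if invariance only meant $T^{-d}\mathfrak H\subseteq\mathfrak H$, the symmetric difference is still in $\mathfrak H$, which suffices. Second, all identities should be read mod $0$; this is harmless because $T$ is non-singular, so null sets map to null sets.

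No finiteness of $m$ is needed, which matters because the paper works in infinite measure.
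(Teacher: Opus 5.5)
Your decomposition is the paper's up to labels. The paper takes \(P_1=T^{-d}(P)\setminus P\) and \(P_2=P\setminus T^{-d}(P)\), which lie in \(\mathfrak H\) by \(T^{-d}\)-invariance, and then applies auto-intersection. The disjointness claims \(P_i\cap T^d(P_i)=\emptyset\) that you state are also true. However, both of your verifications of them rest on an image/preimage slip. In the \(P_1\) case you pass from \(T^{-d}(x)=y\in A\) to ``i.e.\ \(x\in T^{-d}(A)\)''. But \(x\in T^{-d}(A)\) means \(T^{d}(x)\in A\), whereas \(T^{-d}(x)\in A\) only says \(x\in T^{d}(A)\). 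In the \(P_2\) case, \(x\in P_2\subseteq T^{-d}(A)\) gives \(T^{d}(x)\in A\), not \(T^{-d}(x)\in A\). So neither contradiction is actually obtained. Your two computations become correct if \(T^{-d}(A)\) is replaced by \(T^{d}(A)\) throughout, i.e.\ for the sets \(A\setminus T^d(A)\) and \(T^d(A)\setminus A\). These sets belong to \(\mathfrak H\) only under forward invariance \(T^d\mathfrak H\subseteq\mathfrak H\), which is not what the lemma assumes: the paper's \(T^d\)-invariance means \(T^{-d}(A)\in\mathfrak H\) for \(A\in\mathfrak H\). So the slip is not purely cosmetic.

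The repair is one line in each case, using what membership of \(y\) says about \(x=T^d(y)\). If \(y\in P_1\) then \(T^d(y)\notin A\), so \(x\notin A\), contradicting \(x\in P_1\subseteq A\). If \(y\in P_2\) then \(T^d(y)\in A\), so \(x\in A\), contradicting \(x\in P_2\subseteq A^c\). Equivalently, as the paper does, you can argue at the level of sets. The set \(T^{-d}(P_1)=T^{-d}(A)\setminus T^{-2d}(A)\subseteq T^{-d}(A)\) is disjoint from \(P_1\), and \(T^{-d}(P_2)=T^{-2d}(A)\setminus T^{-d}(A)\) is disjoint from \(P_2\subseteq T^{-d}(A)\). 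Applying the bijection \(T^d\) then gives \(T^d(P_i)\cap P_i=\emptyset\). With this correction your proof coincides with the paper's. Your remarks that only \(T^{-d}\mathfrak H\subseteq\mathfrak H\) is needed and that \(m\) may be infinite are correct.
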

\proof
Let \(P \in \mathfrak H\). By the invariance of \(\mathfrak H\) both \(P_1 = T^{-d}(P) \setminus P\) and \(P_2 = P \setminus T^{-d}(P)\) belong to \(\mathfrak H\). By construction, for \(i \in \{1,2\}\), \(T^{-d}(P_i) \cap P_i = \emptyset\) and, since \(T\) is invertible, we have
\[
   T^d(P_i) \cap P_i = \emptyset.
\]
Since \(\mathfrak H\) is \(d\)-auto-intersecting, the equation above implies that \(m(P_1)=m(P_2) = 0\). Hence, noticing that \(P_1 \cup P_2 = P \triangle T^{-d}(P)\), we have \(m \bigl (P \triangle  T^{-d}(P)\bigr) = 0\). This proves the claim.
\qed

We now discuss the \(K\)-property. Recall that \((X,\fF, m, T)\) is K-mixing w.r.t.\ the $\sigma$-algebra \(\fB \subset \fF\) if
\begin{equation}\label{eq:def_k_again}
\mathfrak{B} \subseteq T \mathfrak{B}, \qquad \bigvee_{n = 0}^{\infty} T^{n}\mathfrak{B} = \fF, \qquad \bigcap_{n = 0}^{\infty} T^{-n}\mathfrak{B} =\mathfrak N.
\end{equation}
For \(p \in \mathbb Z\), we set \(\fB_p = T^p \fB\). The following is a characterization of K-mixing in terms of correlations based on the abstract results of \cite{Lin} and similar to \cite[Theorem 1.3.3]{MR1450400} where exactness was considered.
\begin{lemma}\label{lem:K-1}
    Let \(\fB\) a \(\sigma\)-algebra satisfying the first two properties of \eqref{eq:def_k_again}. \((X,\fF, m, T)\) is K-mixing w.r.t.\ \(\fB\) if and only if for any \(p \in \mathbb Z\), \(g\in L^{1}(m)\), \(\int_X g \, dm = 0\),
    \[
    \lim_{n \to \infty}\sup_{\substack{F\in L^{\infty}(\fB_p)\\
    \|F\|_{\infty} \le 1}} \int_{X} (F\circ T^n) g \, dm = 0.
    \]
\end{lemma}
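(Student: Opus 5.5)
We prove the two implications separately. Throughout, write \(\fB_\infty = \bigcap_{n\ge0} T^{-n}\fB\) for the tail, and note that \(\fB_{p}\subseteq \fB_{p+1}\) by the first property of \eqref{eq:def_k_again}. The key identity is
\[
\int_X (F\circ T^n)\, g\, dm = \int_X F \,(g\circ T^{-n}) \, \frac{d(m\circ T^{-n})}{dm}\, dm = \int_X F\, \widehat T^{\,n} g\, dm,
\]
where \(\widehat T\) is the transfer (Perron--Frobenius) operator on \(L^1(m)\) dual to the Koopman operator on \(L^\infty\). For \(F\in L^\infty(\fB_p)\) we may replace \(\widehat T^n g\) by its conditional expectation \(E(\widehat T^n g\,|\,\fB_p)\). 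Hence
\[
\sup_{\substack{F\in L^\infty(\fB_p)\\ \|F\|_\infty\le 1}}\int_X (F\circ T^n) g\,dm = \bigl\|E(\widehat T^n g\mid \fB_p)\bigr\|_{L^1}.
\]
Pulling back by \(T^{-n}\), this equals \(\|E(g\mid T^{-n}\fB_p)\|_{L^1}=\|E(g\mid \fB_{p-n})\|_{L^1}\). This identity of conditional expectations under the non-singular (possibly non-measure-preserving) map should be checked via the duality directly, rather than assuming \(m\)-invariance. The statement thus reduces to showing that
\[
\lim_{n\to\infty}\|E(g\mid\fB_{p-n})\|_{L^1}=0 \quad\text{for all zero-mean } g\in L^1 \iff \fB_\infty \text{ is trivial}.
\]

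\textbf{K \(\Rightarrow\) decay.} The \(\sigma\)-algebras \(\fB_{p-n}\) decrease to \(\fB_\infty=\mathfrak N\). By the reverse martingale convergence theorem (in the \(\sigma\)-finite setting one uses Lin's theorem \cite{Lin}, or approximates \(g\) in \(L^1\) by functions supported on finite-measure sets), \(E(g\mid\fB_{p-n})\to E(g\mid\mathfrak N)\) in \(L^1\). Since \(\int g=0\), \(E(g\mid \mathfrak N)=0\). Conditional expectation is an \(L^1\)-contraction, so the approximation step is harmless.

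\textbf{Decay \(\Rightarrow\) K.} Suppose the tail is non-trivial, and pick \(A\in\fB_\infty\) with \(0<m(A)\) and \(m(A^c)>0\). Choose finite-measure pieces \(A'\subseteq A\) and \(B'\subseteq A^c\) of positive measure, and set
\[
g=\frac{1_{A'}}{m(A')}-\frac{1_{B'}}{m(B')}.
\]
Since \(A\in T^{-n}\fB_p\) for every \(n\), the function \(F=1_{T^n A}-1_{(T^nA)^c}\) lies in \(L^\infty(\fB_p)\) and has \(\|F\|_\infty=1\). Then \(F\circ T^n = 1_A-1_{A^c}\), so the correlation equals \(2\) for every \(n\), contradicting the decay.

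\textbf{Main obstacle.} The delicate part is the \(\sigma\)-finite martingale step together with the conditional-expectation bookkeeping for a merely non-singular \(T\). The paper's own usage takes \(m\) invariant (the billiard measure \(\mu\)), and if needed I would add that assumption, under which \(E(g\circ T^{-n}\mid\fB_p)=E(g\mid\fB_{p-n})\circ T^{-n}\) holds directly.
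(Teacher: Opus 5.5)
Your proposal is correct and follows essentially the paper's route. The forward direction reduces the supremum to the decreasing family $\fB_{p-n}$ with trivial intersection and invokes Lin's result; the paper does this by applying \cite[Corollary 4.1]{Lin} after replacing $g$ by $\mathbb E(g|\fB_p)$. The converse uses a tail set $A=T^{-n}(A_n)$ and an indicator observable whose correlation with a suitable zero-mean $g$ never decays, just as you do. The transfer-operator detour is unnecessary: $F\mapsto F\circ T^n$ maps the unit ball of $L^\infty(\fB_p)$ onto that of $L^\infty(\fB_{p-n})$ for any non-singular invertible $T$, so you do not need to add an invariance assumption.
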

\proof Let us first assume that \((X,\fF, m, T)\) is K-mixing. Fix \(p \in \mathbb Z\). By the first equation of \eqref{eq:def_k_again}, for any \(F \in L^{\infty}(\fB_p)\) and \(n \in \bN\), we have that \(F\circ T^n\) is \(\fB_p\)-measurable. Hence, denoting by \(\mathbb E (\cdot|\cdot)\) the conditional expectation,
\[
\sup_{\substack{F\in L^{\infty}(\fB_p)\\
    \|F\|_{\infty} \le 1}} \int_{X} (F\circ T^n) g \, dm = \sup_{\substack{F\in L^{\infty}(\fB_p)\\
    \|F\|_{\infty} \le 1}} \int_{X} (F\circ T^n) \mathbb E(g| \fB_p) \, dm.
\]
Since \(\int_X \mathbb E(g| \fB_p) \, dm = \int_X g \, dm = 0 \) and, by the third equation of \eqref{eq:def_k_again}, \(\cap_{n}\fB_{p-n} = \mathfrak N\), we can apply \cite[Corollary 4.1]{Lin} with \(\Sigma = \fB_p\) and we prove the first implication. For the other direction, observe that if \((X,\fF, m, T)\) is not K w.r.t.\ \(\fB\) there exists a set \(A \in \cap_{n \ge 0} \fB_{-n}\) with \(m(A), m(A^c) >0\) and a function \(g_A \in L^1(m)\) with \(\int_X g_A dm = 0\) and \(\int_{A} g_A dm >0\). Moreover, by definition of \(A\), for each \(n\in \bN\), there exists \(A_n \in \fB\) such that \(A= T^{-n}(A_n)\). Then, for any \(n\in \bN\),
\[
\int_{X} (\bold 1_{A_n} \circ T^n) g_A \, dm = \int_A g_A \, dm >0.
\]
This contradicts the formula in the statement of the Lemma in the case \(p =0\) and concludes the proof.
\qed
In particular, by an approximation argument and Lemma \ref{lem:K-1}, one can obtain the following result, already noticed in \cite[Theorem 3.5]{MR3235352}.
\begin{corollary}\label{cor:decay-K-mix}
If \((X,\fF, m, T)\) is K-mixing, for any \(F: X \to \bR\) which belongs to the \(L^{\infty}(m)\)-closure of \(\cup_{p\ge 0}L^{\infty}(\fB_p)\) and \(g\in L^1(m)\), \(\int_X g \, dm = 0\), we have
\[
\lim_{n \to \infty} \int_{X} (F\circ T^n) g \, dm = 0.
\]
\end{corollary}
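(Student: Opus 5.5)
The plan is to approximate $F$ uniformly by functions that are measurable with respect to some $\fB_p$, and then use Lemma~\ref{lem:K-1} for those approximants. The leftover error is controlled by the $L^1$ norm of $g$.

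First, fix $F$ in the $L^\infty(m)$-closure of $\cup_{p\ge0}L^\infty(\fB_p)$ and fix $\ve>0$. By definition of the closure, there are $p\ge 0$ and $G\in L^\infty(\fB_p)$ with $\|F-G\|_\infty\le\ve$. We may assume $G\neq 0$, since otherwise $\|F\|_\infty\le\ve$ and the estimate below is immediate.

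Next, since $T$ is K-mixing with respect to $\fB$, the first two properties of \eqref{eq:def_k_again} hold, so Lemma~\ref{lem:K-1} applies. It gives
\[
\lim_{n\to\infty}\sup_{\substack{H\in L^\infty(\fB_p)\\ \|H\|_\infty\le1}}\int_X (H\circ T^n)\,g\,dm=0 .
\]
Apply this with $H=G/\|G\|_\infty$; since $\|G\|_\infty$ is a fixed finite constant, we get $\int_X (G\circ T^n)g\,dm\to0$.

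Then split
\[
\int_X (F\circ T^n)g\,dm=\int_X (G\circ T^n)g\,dm+\int_X \bigl((F-G)\circ T^n\bigr)g\,dm .
\]
For the second term, non-singularity of $T$ gives $\|(F-G)\circ T^n\|_\infty=\|F-G\|_\infty\le\ve$, because $T^{-n}$ maps null sets to null sets. Hence the second term is bounded in absolute value by $\ve\|g\|_{L^1}$. It follows that
\[
\limsup_{n\to\infty}\Bigl|\int_X (F\circ T^n)g\,dm\Bigr|\le\ve\|g\|_{L^1},
\]
and since $\ve>0$ is arbitrary, the limit is $0$.

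There is no real obstacle here. The only points needing care are that composition with $T^n$ preserves the $L^\infty$ norm, which is exactly where non-singularity is used, and that Lemma~\ref{lem:K-1} is stated for a supremum over the unit ball, which is why $G$ is rescaled. Both integrals are finite because $F$ and $G$ are bounded and $g\in L^1$.
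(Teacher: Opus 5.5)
Your proposal is correct, and it is exactly the ``approximation argument plus Lemma~\ref{lem:K-1}'' that the paper invokes without giving details: you approximate $F$ uniformly by some $G\in L^\infty(\fB_p)$, apply Lemma~\ref{lem:K-1} to $G/\|G\|_\infty$, and bound the remainder by $\ve\|g\|_{L^1}$ using non-singularity of $T$. The one step you leave implicit is that the unit ball of $L^\infty(\fB_p)$ is symmetric under $H\mapsto -H$, so the vanishing supremum in Lemma~\ref{lem:K-1} controls $\bigl|\int_X (G\circ T^n)g\,dm\bigr|$ and not merely its positive part.
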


\bibliographystyle{alpha}
\bibliography{modernity2.bib}

\end{document}